\newcommand{\maxp}{13}
\begin{document}

\title{For Complex Orientations Preserving Power Operations, $p$-typicality is Atypical}
\author{Niles Johnson}
\address{Department of Mathematics, University of Georgia, Athens, GA
  30606}
\email{njohnson@math.uga.edu}
\author{Justin Noel}
\address{
Institut de Recherche Mathématique Avancée, Université de Strasbourg,
7 rue René-Descartes
 67084 Strasbourg Cedex
}
\email{noel@math.u-strasbg.fr}
\date{07 June 2010}
\thanks{The first author was partially supported by the NSF: VIGRE Grant
DMS-0738586.}
\thanks{The second author was partially supported by the
ANR: Projet BLAN08-2\_338236, HGRT}
\begin{abstract}
  We show, for primes $p \leq \maxp$, that a number of well-known
$MU_{(p)}$-rings do not admit the structure of commutative
$MU_{(p)}$-algebras. These spectra have complex orientations that factor
through the Brown-Peterson spectrum and correspond to $p$-typical formal
group laws.  We provide computations showing that such a factorization is
incompatible with the power operations on complex cobordism.  This implies,
for example, that if $E$ is a Landweber exact $MU_{(p)}$-ring whose
associated formal group law is $p$-typical of positive height, then the
canonical map $MU_{(p)}\rightarrow E$ is not a map of $H_\infty$ ring
spectra.  It immediately follows that the standard $p$-typical orientations
on $BP,~E(n),$ and $E_n$ do not rigidify to maps of $E_\infty$ ring
spectra.  We conjecture that similar results hold for all primes.

\end{abstract}

\maketitle

\section{Introduction}\label{sec:intro}

This paper arose out of the authors' attempts to address the long-standing
open conjecture:
\begin{conjecture}
  For every prime $p,$ the $p$-local Brown-Peterson spectrum $BP$ admits
  an $E_\infty$ ring structure.
\end{conjecture}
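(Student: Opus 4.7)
The conjecture asserts the existence of an $E_\infty$ ring structure on $BP$, a long-standing open problem, so any proposal must be regarded as a strategy rather than a concrete argument. The most natural attempt is to promote the canonical factorization $MU_{(p)} \to BP$ arising from Quillen's idempotent to a map of $E_\infty$ ring spectra, inheriting structure from the well-known $E_\infty$ structure on $MU$. First I would therefore examine the compatibility of Quillen's idempotent with the $H_\infty$ power operations on $MU_{(p)}$, since these are the easiest obstructions to compute and are detectable by low-dimensional characteristic-class calculations. As the abstract of the present paper already signals, this naive route is obstructed: the $p$-typical factorization cannot be realized even at the $H_\infty$ level, so the argument must avoid constructing the $E_\infty$ structure on $BP$ through $MU_{(p)}$ in this way.

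Given that obstacle, the plan is to apply Goerss--Hopkins obstruction theory directly to the graded ring $BP_*$ viewed as a $BP_*BP$-comodule algebra. One parameterizes $E_\infty$ structures by a moduli space whose homotopy groups are controlled by André--Quillen cohomology of simplicial $\mathbb{F}_p$-algebras enriched with Dyer--Lashof operations. The steps are: (i) build a cofibrant simplicial resolution of $BP_*BP$ as a comodule algebra over itself; (ii) identify the obstruction and moduli cohomology groups; and (iii) show that the obstruction classes vanish and that the relevant moduli space is nonempty and connected. This approach has the advantage of being intrinsic to $BP$ and so is not a priori sensitive to the incompatibility established in the present paper.

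The hard part will be step (ii)--(iii): the relevant André--Quillen cohomology groups are notoriously difficult to compute, in part because they encode the interaction of the Dyer--Lashof operations with the comodule structure, and in part because $BP$ does not arise from a Landweber-exact functor on a smooth stack in the way that $E(n)$ or $E_n$ do. A possible alternative is to exhibit $BP$ as a retract of some other $E_\infty$ ring $R$ by an idempotent distinct from Quillen's --- for example, by constructing $R$ as a Thom spectrum over a space whose $p$-local homotopy type selects the $p$-typical summand without requiring an $H_\infty$ map out of $MU_{(p)}$. In either scenario, the results of this paper serve as a guide: any successful construction must produce an $E_\infty$ structure on $BP$ for which the composite $MU_{(p)} \to BP$ is \emph{not} $H_\infty$, so the proof will, in some sense, have to be non-natural with respect to the usual $MU$-orientation.
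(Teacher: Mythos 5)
There is a genuine gap, and in fact there could not fail to be one: the statement you were asked to prove is stated in the paper as an open \emph{conjecture}, and the paper contains no proof of it. On the contrary, the paper's content is a family of \emph{obstructions}: it shows (for $p\le 13$) that Quillen's retraction $MU_{(p)}\to BP$, and more generally any $p$-typical orientation factoring through it, cannot even be a map of $H_\infty$ ring spectra, so the one concrete strategy anyone had for transporting the $E_\infty$ structure of $MU$ to $BP$ is ruled out. Your proposal correctly identifies and discards that naive route, but what remains is a research program, not an argument. In the Goerss--Hopkins outline, steps (ii)--(iii) --- identifying the relevant Andr\'e--Quillen cohomology groups and showing the obstruction classes vanish --- are exactly where the entire difficulty lives, and you give no computation, no vanishing criterion, and no reason beyond hope that the moduli space is nonempty; the setup itself is also delicate for $BP$, since the obstruction theory requires resolving $BP$ against a homology theory with tractable cooperations and Dyer--Lashof structure, which is precisely what $BP$ lacks compared with $E_n$. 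The alternative Thom-spectrum suggestion names no candidate space and no mechanism for producing the $p$-typical summand, so it is likewise not a step toward a proof.

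Beyond the incompleteness, the strategy cannot be repaired: subsequent work of Lawson (at $p=2$) and Senger (at odd primes) shows that $BP$ does \emph{not} admit an $E_\infty$ ring structure, so the obstruction classes you propose to show vanish are in fact nonzero. The correct lesson to draw from the present paper is the one its authors draw: its calculations constrain what any multiplicative refinement of $BP$ could look like (any hypothetical $E_\infty$ structure would have to be incompatible, at the $H_\infty$ level, with the standard $MU$-orientation), and they supply evidence about the difficulty of the question rather than a route to an affirmative answer.
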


\noindent
Quillen showed that the algebraic map $r_*\colon MU_{(p)*}\rightarrow BP_*$
classifying a universal $p$-typical formal group law over $BP_*,$ could be
realized topologically as the retraction map in a splitting of $p$-local
complex cobordism:
\begin{equation}\label{eq:splitting} 
  BP\xrightarrow{s} MU_{(p)}\xrightarrow{r} BP.
\end{equation}
This splitting plays a key role in many computational applications,
especially in the Adams-Novikov spectral sequence.

One might hope that an $E_\infty$ structure on $BP$ could be exploited
in a number of computations, in particular to prove the existence of
differentials in the Adams-Novikov spectral sequence.  In the case of
$MU,$ a very nice example of such a technique can be found in the recent
work of Hopkins, Hill, and Ravenel on the Kervaire invariant one
problem \cite{HHR09}.  In their proof, the $E_\infty$ structure on $MU$
plays a crucial role in demonstrating that certain elements in the Adams
and Adams-Novikov spectral sequences must support differentials.

A number of attempts have been made to prove the above conjecture and
there have been some positive results in this direction.  Recently,
Birgit Richter has shown that $BP$ is at least $2(p^2+p-1)$ homotopy
commutative \cite{Ric06}.  In unpublished work, Basterra and Mandell
were able to show that $BP$ admits an $E_4$-ring structure. There are
also a number of results demonstrating that $BP$ and its relatives admit
various multiplicative structures compatible with those of $MU$
\cite{EKMM97, Str99, Goe01, Laz03}.

Since Quillen's splitting plays an important role in many $BP$
computations, it is natural to ask whether either map in
\prettyref{eq:splitting} can be made into a map of $E_\infty$ ring spectra.
This splitting was shown to be $A_\infty$ in \cite{Laz04a}.  In this paper
we consider the retraction map
\[
r\colon MU_{(p)}\rightarrow BP 
\] 
and maps of spectra factoring through $r$. The section
\[s\colon BP\rightarrow MU_{(p)}\] has already been considered by Hu-Kriz-May;
they have shown that there are no $E_\infty$ ring maps whatsoever from $BP$
into $MU_{(p)}$ \cite[2.11]{HKM01} \cite[App.~B]{BaM04}.  In fact, their
proof yields the stronger result that there are no $H_\infty$ ring maps
from $BP$ into $MU_{(p)}$.

An $H_\infty$ ring spectrum can be thought of as an $E_\infty$ ring
spectrum up to homotopy; such spectra correspond to cohomology theories
with a well-behaved theory of power operations in degree 0.  To obtain
power operations in other cohomological degrees, one needs the richer
structure known as $\hi^d$.  The $\hi^2$ structure on $MU$ plays a
prominent role in this paper.  This structure arises from the $E_\infty$
structure on $BU$ via the ``Thomification'' functor \cite[IV.2]{May77}.
The resulting power operations agree with the Steenrod operations in
cobordism constructed in \cite{Die68}.

The central work of this paper is to compute the action of these power
operations on $MU_{(p)}^{2*},$ modulo the kernel of $r_*.$ These
calculations yield obstructions to lifting a ring map
\[
MU_{(p)}\rightarrow BP\rightarrow E
\]
to a map of $H_\infty$ ring spectra.

\begin{thm}\label{thm:landweber-exact-thm}
  Suppose $f\colon MU_{(p)} \to E$ is map of $H_\infty$ ring spectra
  satisfying:
  \begin{enumerate}
	\item $f$ factors through Quillen's map to $BP.$
	\item $f$ induces a Landweber 
          exact $MU_*$-module structure on $E_*.$
	\item \bf{Small Prime Condition:} $p\in \{2,3,5,7,11,\maxp\}.$ \label{enum:small-prime-cond}
  \end{enumerate}
  then $\pi_*E$ is a $\mathbb{Q}$-algebra.
\end{thm}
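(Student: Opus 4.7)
My plan is to use the naturality of power operations under $H_\infty$ maps to force many elements of $MU_{(p)*}$ into the kernel of $f_*$, and then invoke Landweber exactness to deduce that this forces $p$ to be inverted in $\pi_*E$. The first step is to make the naturality precise: since $f$ is an $H_\infty$ ring map, for any operation $P$ arising from the $H_\infty^2$ structure on $MU$ (via Thomification of the $E_\infty$ structure on $BU$), the induced map $f_*$ commutes with $P$. Combined with the factorization $f = f'\circ r$, we have $\ker(r_*)\subseteq \ker(f_*)$, so for any $y\in\ker(r_*)$ and any such operation $P$, we obtain $f_*(P(y))=P(f_*(y))=0$.

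The second step is the explicit calculation, which the introduction flags as the ``central work'' of the paper. Choose generators $y$ of $MU_{(p)*}$ that are killed by $r_*$, namely polynomial generators in degrees $2k$ with $k+1\neq p^j$; these are the non-$p$-typical generators that must vanish in any $p$-typical quotient. Now compute $P(y)$ modulo $\ker(r_*)$ using the explicit tom Dieck formulas, extracting specific elements $z = r_*(P(y))\in BP_*$ which must vanish in $\pi_*E$. The aim is to assemble a finite list of such vanishing relations whose logical combination, together with the Landweber regularity of the sequence $p,v_1,v_2,\ldots$ on $\pi_*E$, is inconsistent with $p$ being a non-unit.

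The third step closes the argument. Landweber exactness means each $v_n$ acts as a non-zero-divisor on $\pi_*E/(p,v_1,\ldots,v_{n-1})$. If the computed $z$-relations show that some $v_n$ becomes zero (or more generally a zero-divisor) in this quotient, then regularity forces $(p,v_1,\ldots,v_{n-1})=\pi_*E$; iterating downward (or choosing the minimal $n$ for which the obstruction appears) ultimately forces $(p)=\pi_*E$, and a Landweber-exact theory in which $p$ is a unit is automatically a $\mathbb{Q}$-algebra.

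The principal difficulty will be the combinatorial computation in step two: the Steenrod-style operations on $MU$ are defined by sums whose individual terms are not $p$-typical, so one must track carefully which pieces survive after projection to $BP_*$ and verify non-vanishing of the resulting element. This explains the small-prime hypothesis, since the arithmetic of the relevant coefficients and the bookkeeping of $p$-typicalization both grow in complexity with $p$, presumably requiring a separate explicit verification for each prime $p\leq 13$.
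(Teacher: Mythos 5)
Your overall strategy is the paper's: promote the $H_\infty$ orientation to even-degree ($H_\infty^2$) power operations via the Thom isomorphism, use the factorization through $r$ to get $f_*[\mathbb{C}P^n]=0$ for $n\neq p^i-1$, compute tom Dieck's operations on these classes, and play the outcome against Landweber exactness. But two steps, as you state them, do not go through. First, the obstruction is not an element of $BP_*$ and does not literally ``vanish in $\pi_*E$'': $P_{C_p,MU}[\mathbb{C}P^n]$ lives in $MU^{2np}(BC_p)$, and what naturality gives is that $f_*$ of the power series $MC_n(\xi)=r_*q_*\chi^{2n}P_{C_p,MU}[\mathbb{C}P^n]$ vanishes in $E^*\llbracket\xi\rrbracket/\langle p\rangle\xi$. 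Since $\langle p\rangle\xi=p+\xi(\cdots)$ and $p$ is a non-zero-divisor on $E^*$ (this is where Landweber exactness first enters), that vanishing only yields that the leading coefficient of $f_*MC_n(\xi)$ is divisible by $p$, i.e.\ relations among the $f(v_i)$ \emph{modulo} $p$, not integral vanishing relations. Your write-up skips this translation, and without it the ``vanishing relations'' you plan to assemble are not correctly formulated.

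Second, your closing regularity argument is a non sequitur. If the computed relations only showed some $v_n$ with $n\geq 2$ acting as zero (or nilpotently) on $\pi_*E/(p,v_1,\ldots,v_{n-1})$, regularity would indeed give $(p,v_1,\ldots,v_{n-1})=\pi_*E$, but ``iterating downward'' to $(p)=\pi_*E$ is false in general: $KU_{(p)}$ satisfies $\pi_*/(p,v_1)=0$ while $\pi_*/p\neq 0$, and it is not a $\mathbb{Q}$-algebra. The argument only closes because the explicit calculations (Theorem \ref{thm:non-zero-obstructions}) place the obstruction at the bottom of the regular sequence: for odd $p\leq 13$ the leading coefficient of $MC_{2(p-1)}$ is a unit times a power of $v_1$, so $f(v_1)$ is nilpotent in $E^*/p$; at $p=2$ one must combine $MC_2$ (giving $f(v_2)^2=f(v_1)^6$ mod $2$) with $MC_4$ (giving $f(v_1)^4f(v_2)^2=0$ mod $2$) to conclude $f(v_1)^{10}=0$ mod $2$. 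Only then does injectivity of multiplication by $f(v_1)$ on $E^*/p$ force $E^*/p=0$, hence $p$ invertible and $\pi_*E$ a $\mathbb{Q}$-algebra. So the gap is not that you defer the computation (the paper does too, to a computer), but that your scheme neither specifies nor can formally guarantee that the relations obtained concern $v_1$ modulo $p$ rather than some higher $v_n$, and your fallback logic for the latter case is invalid.
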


\noindent 
The Landweber exactness requirement is primarily a matter of convenience
for the statement and proof of this result.  Our computations have similar
consequences for a more general class of $p$-typical spectra and the proof
of \prettyref{thm:landweber-exact-thm}---given at the end of
\prettyref{sec:Main-Theorems}---illustrates how one might apply the
calculations in general.  Details of our computational methods, as well as
a more complete list of calculations, are given in
\prettyref{sec:Computer-Generated-Calculations}.

As special cases of
\prettyref{thm:landweber-exact-thm} we obtain the following:
\begin{thm}\label{thm:orientations-not-p-typical}
Suppose the Small Prime Condition holds and $n\geq 1.$ The standard
$p$-typical orientations on $E_n,$ $E(n),$ $BP\langle n \rangle,$ and $BP$
do not respect power operations.  In particular, the corresponding
$MU$-ring structures do not rigidify to commutative $MU$-algebra
structures.
\end{thm}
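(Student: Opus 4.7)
The plan is to reduce \prettyref{thm:orientations-not-p-typical} to \prettyref{thm:landweber-exact-thm} wherever possible. Each of $E_n$, $E(n)$, $BP\langle n\rangle$, and $BP$ carries a canonical $p$-typical orientation $MU_{(p)}\to E$ that, by construction, factors through Quillen's retraction $r\colon MU_{(p)}\to BP$. A commutative $MU_{(p)}$-algebra structure on $E$ rigidifying this map would in particular yield an $H_\infty$ ring map $MU_{(p)}\to E$, so it suffices to obstruct the $H_\infty$ case.

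For $E\in\{E_n, E(n), BP\}$, the associated formal group law is $p$-typical of positive height and the induced $MU_*$-module structure on $\pi_*E$ is Landweber exact; this is standard, using regularity of $(p,v_1,v_2,\ldots)$ in $BP_*$ together with inversion of $v_n$ or completion at the maximal ideal. With the Small Prime Condition in force, \prettyref{thm:landweber-exact-thm} would then demand that $\pi_*E$ be a $\mathbb{Q}$-algebra, contradicting the fact that each of $\pi_*BP=\mathbb{Z}_{(p)}[v_1,v_2,\ldots]$, $\pi_*E(n)=\mathbb{Z}_{(p)}[v_1,\ldots,v_{n-1},v_n^{\pm 1}]$, and $\pi_*E_n=W(\mathbb{F}_{p^n})[[u_1,\ldots,u_{n-1}]][u^{\pm 1}]$ is $p$-local but not rational.

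The case $E=BP\langle n\rangle$ is not Landweber exact, and two routes present themselves. The first is to apply the underlying power operation computations described in \prettyref{sec:Computer-Generated-Calculations} directly, which, per the remarks following \prettyref{thm:landweber-exact-thm}, suffice to obstruct $H_\infty$ structures on a broader class of $p$-typical spectra. The second and cleaner route is to reduce to the Landweber exact case: assuming $MU_{(p)}\to BP\langle n\rangle$ were $H_\infty$, inverting $v_n$ is a smashing Bousfield localization that preserves $H_\infty$ ring structure, so the composite $MU_{(p)}\to BP\langle n\rangle\to v_n^{-1}BP\langle n\rangle=E(n)$ would be an $H_\infty$ orientation of $E(n)$, already ruled out above.

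The substantive content is packaged in \prettyref{thm:landweber-exact-thm}; what remains is verification that the classical $p$-typical orientations satisfy its hypotheses and that smashing-localization inheritance of $H_\infty$ structure is in place. The only genuinely new input is the $BP\langle n\rangle$ reduction, where one must ensure that the $v_n$-localization is compatible with the $H_\infty$ structures at the level of spectra and not merely at the level of homotopy; this is the step most likely to demand citation of the standard multiplicative localization machinery rather than pure invocation of the main theorem.
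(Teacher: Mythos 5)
Your proposal is correct and takes essentially the same route as the paper: there, \prettyref{thm:orientations-not-p-typical} is presented as an immediate special case of \prettyref{thm:landweber-exact-thm}, with the non--Landweber-exact spectrum $BP\langle n \rangle$ covered by applying the computations of \prettyref{thm:non-zero-obstructions} directly (the remark following \prettyref{thm:landweber-exact-thm}), exactly as in your first route for that case. Your alternative $v_n$-localization argument is not needed and would require a genuine justification that inverting $v_n$ preserves $H_\infty$ structures and $H_\infty$ maps (extended powers do not obviously commute with such localizations), a point you rightly flag.
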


\begin{remark}
  The appearance of the Small Prime Condition in the above two theorems
  arises from limitations of our computational resources and the efficiency
  of our algorithms.  There is no theoretical bound on the primes for which
  our methods apply.
\end{remark}

\begin{conjecture}
  Theorems \ref{thm:landweber-exact-thm} and
  \ref{thm:orientations-not-p-typical} hold without the
  Small Prime Condition.
\end{conjecture}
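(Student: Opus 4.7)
The plan is to upgrade the prime-by-prime computations of \prettyref{sec:Computer-Generated-Calculations} into an argument that is uniform in $p$. By \prettyref{thm:landweber-exact-thm} it suffices to exhibit, for each prime $p$, a power operation $Q$ on $MU_{(p)}$ and a class $x \in MU_{(p)*}$ such that the induced obstruction in $MU_{(p)*}/\ker r_*$ is nonzero in a way that survives Landweber exactness. The challenge is to produce such a witness by a single recipe valid for all $p$, rather than by computer search.

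First I would fix a natural family of witnesses $x_p \in MU_{(p)*}$ of degree depending on $p$ --- for instance a Hazewinkel or Araki generator in degree $2(p^k-1)$, or a suitable $[\mathbb{CP}^n]$-class --- and analyze the action of a fixed power operation, such as $P^p$ in the tom Dieck family, on it. The operation is determined by the universal formal group law via Quillen's formula, which expresses $P^p(x)$ in terms of the coefficients of the formal sum and of the $[p]$-series. Passing to the logarithm makes both $r_*$ and $P^p$ explicit, so the obstruction can be written as a rational polynomial in the logarithmic coefficients $\ell_n$ whose coefficients one would like to identify as $p$-local units.

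Second, to promote nonvanishing in $MU_{(p)*}/\ker r_*$ to nonvanishing in $\pi_* E$ for any Landweber-exact $E$, I would use that Landweber exactness makes $E_*$ detect monomials in the Hazewinkel generators faithfully modulo any prescribed regular ideal; it therefore suffices to show that the obstruction polynomial is nonzero modulo $p$. This reduces the problem to a $p$-adic valuation estimate on specific multinomial coefficients appearing in the expansion of $[p]_F(x)$, or equivalently on certain combinations of the $\ell_n$.

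The main obstacle --- and the precise reason the Small Prime Condition appears --- is this uniform nonvanishing. In the computer verification, exhaustive evaluation in a fixed truncation of the Lazard ring rules out cancellation directly; in general one instead needs either a Lucas-type congruence controlling the relevant multinomial coefficients in the $p$-series, or a structural statement about the incompatibility of the Dyer--Lashof-like coaction on $MU_{(p)*}MU_{(p)}$ with the $p$-typicalization idempotent in unboundedly high degree. A more modest intermediate goal would be an asymptotic version of the theorem: prove that for $p$ beyond some effective bound the obstruction is already visible in the lowest admissible degree, thereby reducing the conjecture to a finite, if longer, computer verification of the remaining primes.
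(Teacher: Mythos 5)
The statement you are addressing is stated in the paper as a \emph{conjecture}: the authors give no proof, and the Small Prime Condition in Theorems \ref{thm:landweber-exact-thm} and \ref{thm:orientations-not-p-typical} is there precisely because the only known input is the prime-by-prime computation of \prettyref{thm:non-zero-obstructions}. Your proposal correctly identifies the right reduction --- everything comes down to showing, uniformly in $p$, that some obstruction class (in the paper's notation, $MC_n(\xi) \in BP^*\llbracket\xi\rrbracket/\langle p\rangle\xi$ for suitable $n \neq p^i-1$, with $MC_{2(p-1)}$ the natural candidate) has a coefficient that is a $p$-local unit times a monomial in the $v_i$, so that the argument in the proof of \prettyref{thm:landweber-exact-thm} forces $f(v_1)$ to be nilpotent mod $p$. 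But your text stops exactly where the proof would have to begin: you name the uniform nonvanishing as ``the main obstacle'' and defer it to an unspecified Lucas-type congruence on the multinomial coefficients of the $p$-series, or to an unproved structural incompatibility statement, or to an asymptotic bound reducing the problem to a finite check. None of these is carried out, and no candidate leading-term formula is even written down, so the proposal is a research plan rather than a proof; the conjecture remains open after it.

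Two smaller points. First, the appeal to \prettyref{thm:landweber-exact-thm} is slightly circular as phrased: that theorem is itself only proved under the Small Prime Condition, so what you actually need is to reprove its computational input (\prettyref{thm:non-zero-obstructions}, or at least the statement that $MC_{2(p-1)}$ has leading coefficient a unit multiple of a power of $v_1$) for all primes, and then rerun the paper's argument --- which is indeed the structure the authors anticipate, but it should be said explicitly. Second, your claim that Landweber exactness makes $E_*$ ``detect monomials in the Hazewinkel generators faithfully modulo any prescribed regular ideal'' is stronger than what is true or needed; the paper only uses that $p, v_1$ act regularly, so that nilpotence of $f(v_1)$ in $E^*/p$ forces $E^*/p = 0$. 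A concrete intermediate target, if you want to pursue this, is the closed formula
\[
MC_{2(p-1)}(\xi) = (2p-1)\,a_0^{2p-4}\bigl(-v_1 a_0 a_{p-1} - a_0 a_{2(p-1)} + p\,a_{p-1}^2\bigr),
\]
from \prettyref{sec:Sparseness}: proving that its lowest-order coefficient mod $\langle p\rangle\xi$ is a unit multiple of a $v_1$-power for all $p$ would require control of the leading terms of $a_{p-1}$ and $a_{2(p-1)}$ mod $\langle p\rangle\xi$, and it is exactly this control that neither the paper nor your proposal currently supplies.
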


In closely related work, Matt Ando \cite{And95} has constructed $\hi$
maps from $MU$ to $E_n.$ Since these maps satisfy the second condition
of \prettyref{thm:landweber-exact-thm} we have the following:

\begin{cor}
  For the primes listed above, none of the $H_\infty$ orientations on
  $E_n$ constructed in \cite{And95} are $p$-typical.
\end{cor}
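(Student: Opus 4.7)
The plan is to argue by contradiction using \prettyref{thm:landweber-exact-thm}. Suppose that for one of the primes satisfying the Small Prime Condition, some $H_\infty$ orientation $g\colon MU \to E_n$ from \cite{And95} is $p$-typical. Since $E_n$ is already $p$-local, $g$ descends through the localization $MU \to MU_{(p)}$ to yield an $H_\infty$ ring map $f\colon MU_{(p)} \to E_n$.

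Next I would verify the three hypotheses of \prettyref{thm:landweber-exact-thm} for this $f$. Hypothesis (3) is the standing Small Prime assumption. Hypothesis (2) is the classical fact that Morava $E$-theory is Landweber exact, being the evenly-graded version of the Lubin--Tate deformation theory of a height-$n$ formal group law over a perfect field. Hypothesis (1) is the content of $p$-typicality: by definition, an orientation is $p$-typical exactly when the formal group law it induces on the target is $p$-typical, which by the universal property of $BP$ is the same as saying the underlying ring map $MU_{(p)} \to E_n$ factors through Quillen's retraction $r\colon MU_{(p)} \to BP$.

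With the three hypotheses in hand, \prettyref{thm:landweber-exact-thm} would force $\pi_*E_n$ to be a $\mathbb{Q}$-algebra. This contradicts the well-known description $\pi_*E_n \cong W(\mathbb{F}_{p^n})[[u_1,\ldots,u_{n-1}]][u^{\pm 1}]$, in which $p$ lies in the maximal ideal and is manifestly not invertible. This contradiction completes the argument.

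The only step requiring any real care is the identification of $p$-typicality of the orientation with the factorization through $r$ demanded by hypothesis (1); but this is precisely the content of Quillen's theorem on the universal $p$-typical formal group law and amounts to routine bookkeeping once Ando's construction is unpacked. Everything nontrivial is already packaged inside \prettyref{thm:landweber-exact-thm}, and the corollary is then immediate, so there is no substantive obstacle beyond invoking the theorem correctly.
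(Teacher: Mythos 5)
Your argument is correct and is essentially the paper's own (implicit) proof: the corollary is obtained exactly by feeding a hypothetical $p$-typical Ando orientation into \prettyref{thm:landweber-exact-thm}, using Landweber exactness of $E_n$ for hypothesis (2) and the identification of $p$-typicality with factorization through Quillen's map for hypothesis (1), then contradicting the fact that $p$ is not invertible in $\pi_*E_n \cong W(\mathbb{F}_{p^n})[[u_1,\ldots,u_{n-1}]][u^{\pm 1}]$. The extra care you take with descending to $MU_{(p)}$ is handled in the paper by the standing convention that all spectra are $p$-localized, so nothing further is needed.
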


\begin{notation}
  Throughout this paper we will refer to a map of ring spectra
  $MU\rightarrow E$ as a (complex) orientation on $E$ or an $MU$-ring
  structure. For convenience, we will henceforth assume all spectra are
  localized at a prime $p.$ We will also use the shorthand
  \[
  E^*\equiv E^*(*)=\pi_{-*}E
  \]
  for the $E$ cohomology of a point.  We will use cohomological gradings
  throughout this paper.
\end{notation}

\subsection{Acknowledgements} 

The authors would like to thank their advisor, Peter May, for
suggesting this problem and for his constant support, personal and
intellectual.  They sincerely appreciate the comments and suggestions that
he and Vigleik Angeltveit have made on the many drafts of this paper.

The authors would like to thank Matthew Ando for stimulating discussions
at the University of Illinois under the support of the Midwest Topology
Network.  They also wish to thank John Zekos of the University of Chicago and Jon Hanke of the University of Georgia for computer support.

\section{Main Theorems}\label{sec:Main-Theorems}

Our work studies power operations arising from an $\hi$ orientation.
This naturally produces power operations in degree 0, and our first step
is to observe that such an orientation under $MU$ defines a
wider family of power operations acting on even degrees.
\begin{thm}[See {\ref{thm:h-infty-eq-h-infty2}}]\label{thm:even-deg-power-ops}
  Suppose $f\colon MU\rightarrow E$ is a map of $H_\infty$ ring spectra, then
  for each $n\in\mathbb{Z}$ there is a map \[P_{C_p,E}\colon E^{2n}\rightarrow
  E^{2np}(BC_p)\] making the following diagram commute.  
\begin{figure}[H]
\[
\xymatrix{
MU^{2n}\ar[rr]^-{P_{C_p,MU}}\ar[d]_-{f_*}
& &
MU^{2pn}(BC_p)\ar[d]^-{f_*}\\
E^{2n}\ar[rr]^-{P_{C_p,E}} & &
E^{2pn}(BC_p)}\label{fig:h-infty2-E}
\]
\caption{Even-degree power operations induced by $\hi$ orientation
  under $MU$. \label{fig:h-infty2-internal}}
\end{figure}
\end{thm}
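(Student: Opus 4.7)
The plan is to construct $P_{C_p,E}$ from three ingredients: the $p$-fold smash power, the $H_\infty$-structure map of $E$, and a Thom isomorphism for the complex regular representation of $C_p$ provided by the orientation $f$. The upshot is that the even-degree power operations predicted by an $H_\infty^2$ structure can already be assembled from the $H_\infty$ structure of $E$ once a complex orientation is supplied.

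First I would represent $x \in E^{2n}$ by a map of spectra $x\colon S^{-2n} \to E$ and form the $p$-fold smash power $x^{\wedge p}\colon S^{-2np} \to E^{\wedge p}$. This is $C_p$-equivariant, with $C_p$ permuting the smash factors on both sides; because $S^{-2n}$ is even-dimensional, the induced $C_p$-action on $S^{-2np}$ agrees with the virtual representation sphere $S^{-n\rho}$, where $\rho$ denotes the complex regular representation of $C_p$. Passing to Borel constructions converts $x^{\wedge p}$ into a non-equivariant map of spectra
\[
\xi(x)\colon BC_p^{-n\rho} \longrightarrow (E^{\wedge p})_{hC_p} = D_{C_p}E,
\]
where $BC_p^{-n\rho}$ is the Thom spectrum of $-n\rho$ over $BC_p$. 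Composing with the $H_\infty$ structure map $\mu\colon D_{C_p}E \to E$ produces a class in $\tilde E^{0}(BC_p^{-n\rho})$. Because $f$ endows $E$ with a complex orientation, the virtual bundle $n\rho$ is Thom-oriented, yielding an isomorphism $\Phi\colon \tilde E^{0}(BC_p^{-n\rho}) \cong E^{2np}(BC_p)$. I would then set $P_{C_p,E}(x) := \Phi(\mu \circ \xi(x))$.

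For commutativity of the square, I would apply the identical recipe with $E$ replaced by $MU$ to obtain a candidate $P'_{C_p,MU}\colon MU^{2n} \to MU^{2np}(BC_p)$. Each ingredient of the construction is natural in $f$: smash powers and Borel constructions are functorial, $f$ intertwines the $H_\infty$ structure maps (as $f$ is $H_\infty$), and $f$ carries the canonical $MU$-Thom class of $n\rho$ to the $E$-Thom class (as $f$ is a map of complex-oriented ring spectra). So the square commutes provided that $P'_{C_p,MU}$ agrees with the $P_{C_p,MU}$ in the statement, namely the power operation coming from the $H_\infty^2$ structure on $MU$. Establishing this final identification is precisely the content of Theorem~\ref{thm:h-infty-eq-h-infty2}.

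I expect that last comparison to be the main obstacle. It requires matching the Thomification of the additive $E_\infty$ operad action on $BU$ with the composite recipe (smash power) $\circ$ ($H_\infty$) $\circ$ (Thom) used above, which in turn demands carefully tracking how the $E_\infty$ structure maps on $BU$ pass to the Thom spectrum level and how the resulting equivariant Thom classes interact with the $H_\infty$ transfer. Once this comparison is secured, the rest of the theorem is a formal naturality diagram chase.
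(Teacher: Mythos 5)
Your construction is essentially the paper's own route: both identify the extended power of an even sphere with a Thom spectrum over $B\Sigma_p$ (respectively $BC_p$) via $D_{\Sigma_k}S^{2i}\cong B\Sigma_k^{V_k\otimes\mathbb{C}^i}$, use the Thom classes supplied by the complex orientation together with the $H_\infty$ structure maps to produce the even-degree operations, and obtain commutativity of the square from exactly the two facts you list --- that $f$ carries $MU$-Thom classes to $E$-Thom classes and intertwines the $H_\infty$ structure maps; this is the decomposition of the middle square in \prettyref{fig:h-infty2-orientations}. The one misplacement in your plan is the final step: the identification of your candidate $P'_{C_p,MU}$ with the standard $P_{C_p,MU}$ is \emph{not} the content of \prettyref{thm:h-infty-eq-h-infty2} (whose proof is precisely the naturality argument you have already sketched); it is McClure's result, cited from \cite{BMMS86}, that the Thom classes $\mu_{i,k}$ combined with the $H_\infty$ structure maps of $MU$ give its $H_\infty^2$ structure, together with the identification of the resulting operations with tom~Dieck's operations \cite{Die68}. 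So the comparison you anticipate as the main obstacle is imported as a known theorem rather than proved, and your argument is complete once you invoke it. A minor further difference: the paper establishes the stronger statement that the bottom composites make $E$ an $H_\infty^2$ ring spectrum (which requires verifying the coherence conditions among the structure maps), whereas you construct only the single operation in each even degree --- which indeed suffices for the statement at hand.
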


\noindent
These power operations are precisely tom~Dieck's Steenrod operations
in cobordism \cite{Die68}; they form what is known as an $\hi^2$ ring
structure \cite{BMMS86}.  In \prettyref{sec:thom-iso-and-h-infty-2} we provide an
alternate construction of these operations, using the Thom isomorphism
and the standard $\hi$ structure on $MU.$ In
\prettyref{thm:h-infty-eq-h-infty2} we show that an orientation
$MU \to E$ is $\hi$ if and only if it is $\hi^2,$ and
\prettyref{thm:even-deg-power-ops} follows from this.

Our applications rely on a simple observation following from
\prettyref{thm:even-deg-power-ops}: if one can find some
$x\in MU^{2n}$ such that $f_*(x)=0,$ yet $f_*P_{C_p,MU}(x)\neq 0$,
then the above square cannot commute and therefore $f$ cannot be a
map of $H_\infty$ ring spectra.  The difficulty in this approach is
showing $f_*P_{C_p,MU}(x)\neq 0$.  For well chosen $x$ and $f$ this is
a strictly algebraic problem, although not a simple one.  


\subsection{Reducing to an algebraic condition}
The theory of formal group laws provides a description of the ring
$E^*(BC_p),$ which we describe below. The ring $MU^*$ carries the universal
formal group law, and so an orientation $f\colon MU \to E$ induces a formal
group law on $E.$ For any formal group law $F,$ the $p$-series $[p]_F\xi$
and the reduced $p$-series $\left<p\right>_F \xi$ are defined by the
following equations (when clear from context, we will drop the subscript
$F$):
\[
\overbrace{\xi+_F\cdots+_F \xi}^{p \textrm{ times}}=\left[p
\right]_F \xi=\xi \cdot \left<p\right>_F \xi
\] 
The ring $E^*(BC_p)$ is
isomorphic to the quotient ring
$E^* \llbracket \xi \rrbracket / [p]\xi$. The factorization above
defines the projection map:
\begin{equation}
  q_*\colon E^* (BC_p) \cong E^*\llbracket \xi \rrbracket/[p]\xi\rightarrow
  		E^*\llbracket \xi \rrbracket/\left< p\right>\xi.\label{eq:def-q}
\end{equation}

Quillen provides a formula (\prettyref{eq:quillen1}) for
$\chi^{m+n} P_{C_p,MU}(x),$ $m \gg 0,$ when \[x=[\cp{n}]\in MU^{-2n}\]
and 
$\chi$ is defined by
\[
\chi=\prod_{i=1}^{p-1}[i]\xi \in MU^{2(p-1)}\llbracket \xi\rrbracket/[p]\xi.
\] 

\begin{prop}\label{prop:h-infty-consequence}
  Suppose $f\colon MU \to E$ is a map of $\hi^2$ ring spectra.  Then there
  are power operations $P_{C_p, E}$ making
  \prettyref{fig:h-infty2-pt} commute, and in particular
\[
f_* q_* \chi^{2n} P_{C_p, MU} [\cp{n}] = q_* \chi^{2n} P_{C_p, E} f_*[\cp{n}].
\]
\end{prop}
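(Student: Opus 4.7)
The plan is to deduce the proposition directly from \prettyref{thm:even-deg-power-ops} by checking that the additional pieces appearing in the statement---namely the quotient map $q_*$ of \prettyref{eq:def-q} and multiplication by the class $\chi^{2n}$---are themselves natural under a ring map $f_*$. Once this is established, the equality in the conclusion becomes a straightforward concatenation of three naturality squares with the square supplied by \prettyref{thm:even-deg-power-ops}.

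First I would invoke \prettyref{thm:even-deg-power-ops} to produce $P_{C_p,E}$ and the commutativity $f_* P_{C_p,MU}(x) = P_{C_p,E} f_*(x)$ for any $x \in MU^{2n},$ in particular for $x = [\cp{n}]$. Next, because $f$ is a map of ring spectra, the induced map $f_*\colon MU^*(BC_p) \to E^*(BC_p)$ carries the universal formal group law on $MU^*$ to the formal group law on $E^*$ classified by $f$. Consequently $f_*$ sends $[p]_{MU}\xi$ to $[p]_{E}\xi$ and $\langle p\rangle_{MU}\xi$ to $\langle p\rangle_{E}\xi,$ so $f_*$ descends to a ring map
\[
MU^*\llbracket \xi \rrbracket/\langle p\rangle\xi \longrightarrow E^*\llbracket \xi \rrbracket/\langle p\rangle\xi
\]
that intertwines the respective $q_*$ maps. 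By the same token $f_*\chi_{MU} = \chi_{E},$ since each factor $[i]_{MU}\xi$ is sent to $[i]_{E}\xi,$ and therefore $f_*(\chi_{MU}^{2n}\cdot y) = \chi_{E}^{2n}\cdot f_*(y)$ for every $y$ in the target of $q_*$. Applying $q_*$ and then multiplying by $\chi^{2n}$ to both sides of the $P_{C_p}$ naturality square yields exactly the displayed equation.

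The main substantive step is the existence and compatibility of $P_{C_p,E},$ which is the content of \prettyref{thm:even-deg-power-ops}; the remaining manipulations are formal diagram chases based on the fact that $q_*$ and $\chi$ are constructed purely from the formal group law, a structure preserved by any ring map $f_*.$ I expect no real obstacle here---the only thing to be careful about is that the formulas defining $q_*$ and $\chi$ make sense identically in $MU^*(BC_p)$ and in $E^*(BC_p)$ and that $f_*$ respects both sides---which is precisely the observation above.
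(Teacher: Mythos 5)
Your argument is correct and is essentially the route the paper intends: the left square of \prettyref{fig:h-infty2-pt} is supplied by \prettyref{thm:even-deg-power-ops} (an $\hi^2$ map is in particular $\hi$), and the right square commutes because the orientation sends $[i]_{MU}\xi$ to $[i]_{E}\xi$, hence preserves $[p]\xi,$ $\langle p\rangle\xi,$ and $\chi,$ so $f_*$ intertwines $q_*$ and multiplication by $\chi^{2n}$. The paper leaves exactly this diagram chase implicit, so there is nothing to add.
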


\begin{figure}[H]
\[
\xymatrix@C=4pc{
MU^{2*}\ar[r]^-{P_{C_p,MU}}\ar[d]_-{f_*}
& 
MU^{2p*} \llbracket \xi \rrbracket/[p]\xi\ar[d]^-{f_*} \ar[r]^-{q_* \chi^{2n}}
&
MU^{2p* + 4n(p-1)} \llbracket \xi \rrbracket/\langle
p\rangle\xi \ar[d]^-{f_*}\\
E^{2*}\ar@{-->}[r]^-{P_{C_p,E}} 
& 
E^{2p*} \llbracket \xi \rrbracket /[p] \xi \ar[r]^-{q_* \chi^{2n}}
&
E^{2p* + 4n(p-1)} \llbracket \xi \rrbracket /\langle p\rangle \xi}
\]
\caption{Complex orientations and power operations.\label{fig:h-infty2-pt}}
\end{figure}

When $E$ is $BP$ and $f=r\colon MU\rightarrow BP$ is Quillen's map, we note
that $r_*[\cp{n}] = 0$ for $n \neq p^i - 1$. By considering the
section $s \colon BP \to MU,$ James McClure showed that
\prettyref{prop:h-infty-consequence} gives a necessary and sufficient
condition for $r$ to carry an $\hi^2$ structure:

\begin{thm}[{\cite[VIII.7.7,7.8]{BMMS86}}]\label{thm:McClure-condition}
  The map $P_{C_p, BP}=r_* P_{C_p,MU} s_*$ is the only map that could
  possibly make \prettyref{fig:h-infty2-pt} commute\footnote{The
    interested reader is encouraged to verify that commutativity of
    \prettyref{fig:h-infty2-pt} does not follow formally from the
    definition of $P_{C_p, BP}$.}.  Quillen's orientation is
  $\hi^2$ if and only if the outer rectangle in this diagram commutes,
  and this occurs if and only if the elements
  \[ 
  MC_n(\xi)= r_*q_*\chi^{2n}P_{C_p,MU}[\cp{n}] \in
  BP^{2n(p-2)}\llbracket \xi \rrbracket / \left< p \right>\xi
  \] 
 are 0 when $n\neq p^i-1$ for some $i$.
\end{thm}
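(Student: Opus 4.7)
My plan is to prove the theorem in three stages corresponding to the three assertions in the statement: uniqueness of the candidate $P_{C_p,BP}$, equivalence of outer-rectangle commutativity with the $\hi^2$ property of Quillen's orientation, and reduction to the vanishing of $MC_n(\xi)$ for $n\neq p^i-1$.

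I would begin with uniqueness, which is formal. Because $r\circ s=\mathrm{id}_{BP}$, the induced maps satisfy $r_*s_*=\mathrm{id}$ on $BP$-cohomology, so any operation $P_{C_p,BP}$ making the left square of \prettyref{fig:h-infty2-pt} commute is forced to equal
\[
P_{C_p,BP}=P_{C_p,BP}\circ r_*\circ s_*=r_*\circ P_{C_p,MU}\circ s_*.
\]
With this formula installed as the definition of $P_{C_p,BP}$, one direction of the $\hi^2$ equivalence falls out of \prettyref{thm:even-deg-power-ops}: an $\hi^2$ structure on $r$ supplies \emph{some} operation making the left square---and hence the outer rectangle---commute, and the uniqueness just established identifies it with our candidate. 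The converse direction---that commutativity of the outer rectangle alone already forces the $\hi^2$ compatibility of $r$---is the substantive input I would extract from \cite[VIII.7.7]{BMMS86}: there one shows that in the relevant degrees the map $q_*\chi^{2n}$ is sufficiently faithful to detect the vanishing of the obstruction $\Delta=P_{C_p,BP}r_*-r_*P_{C_p,MU}$, so that knowing $q_*\chi^{2n}\Delta=0$ is enough to conclude $\Delta=0$.

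For the final reduction, I would dispatch the easy direction by evaluation: if the outer rectangle commutes, then plugging in $[\cp{n}]$ with $n\neq p^i-1$ kills the right-hand composite because $r_*[\cp{n}]=0$ for such $n$ (this is precisely the defining property of Quillen's $p$-typicalization on the polynomial generators), forcing $MC_n(\xi)=0$. For the converse I would use that tom~Dieck's operations are multiplicative for the external product and that $r_*$ is a ring map, so that $q_*\chi^{2n}\Delta$ respects products; the additive formula relating $P(x+y)$ to $P(x),P(y)$ and transfers in $\xi$ then lets one propagate vanishing from a polynomial generating set to all of $MU^{-2*}$. Since $\{[\cp{n}]\}_{n\geq1}$ furnishes such a generating set and the generators with $n=p^i-1$ are handled by construction of $P_{C_p,BP}$, the vanishing of $MC_n(\xi)$ for $n\neq p^i-1$ is all one needs. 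The main obstacle I would expect, and the reason for citing rather than reproving, is the converse half of the second stage---the faithful detection of $\Delta$ by $q_*\chi^{2n}$---which is the technical heart of the BMMS analysis.
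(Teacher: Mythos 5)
Your overall architecture (formal uniqueness via $r_*s_*=\mathrm{id}$; cite \cite[VIII.7]{BMMS86} for the hard implication that the coefficient-level condition forces the $\hi^2$ structure; an algebraic propagation argument for the equivalence with the vanishing of the $MC_n$) parallels what the paper actually does, since the paper offers no proof beyond the citation and the formal-group reformulation in \prettyref{thm:p-typical-form}. The uniqueness step and the easy evaluation direction ($r_*[\cp{n}]=0$ for $n\neq p^i-1$ forces $MC_n=0$) are correct. But your propagation step contains a genuine error: $\{[\cp{n}]\}$ is \emph{not} a polynomial generating set of $MU^{-2*}$ after localizing at $p$ --- its Milnor number is $\pm(n+1)$, so $[\cp{n}]$ fails to be a generator whenever $p\mid n+1$ and $n\neq p^i-1$ (e.g.\ $[\cp{5}]$ at $p=2$). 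The paper's fix, in the proof of \prettyref{thm:p-typical-form}, is to use that the $[\cp{n}]$ are \emph{rational} generators (\prettyref{prop:Q-gens-of-MU}) together with the fact that all rings in sight are torsion-free, so that vanishing can be checked rationally; without that observation your step ``propagate vanishing from a polynomial generating set'' fails as stated. Relatedly, your claim that the generators with $n=p^i-1$ are ``handled by construction of $P_{C_p,BP}$'' is exactly the kind of reasoning the paper's footnote warns against: at $x=[\cp{p^i-1}]$ commutativity requires comparing $P_{C_p,MU}$ at $x$ and at $s_*r_*x$, which differ by a (typically decomposable) element of $\ker r_*$, so these cases also need the additivity-after-$\chi$-multiplication (\prettyref{prop:power_op_is_a_ring_map}) and the same propagation through $\ker r_*$; nothing there is formal from the definition of $P_{C_p,BP}$.

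A second, softer point: your description of the content of \cite[VIII.7.7]{BMMS86} as ``$q_*\chi^{2n}$ is sufficiently faithful to detect the vanishing of $\Delta$'' misstates the mechanism and would not work as a proof. The composite $q_*\chi^{2n}$ is far from injective (passing to $BP^*\llbracket\xi\rrbracket/\langle p\rangle\xi$ kills the transfer ideal), and even the full vanishing $\Delta=0$ on coefficients would not formally yield an $\hi^2$ structure on $r$, which is a statement about maps of spectra out of $D_{\Sigma_j}MU$ for all $j$. The actual content of McClure's theorem is that these weak, mod-$\langle p\rangle\xi$ coefficient conditions nevertheless suffice, via the splitting of $MU_{(p)}$ into suspensions of $BP$, reduction to the single group $C_p$, and an analysis of maps out of extended powers. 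Since you only cite this statement rather than prove it, your logical chain still closes, but the gloss should not be mistaken for an argument.
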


\noindent
In \prettyref{thm:p-typical-form} we provide an alternate formulation of
this result in the language of formal group laws.  Later, in
\prettyref{sec:Sparseness} we show that $MC_n$ is trivial when $(p-1)$ does
not divide $n$.  %

In \prettyref{sec:Computing-Obstructions} we obtain an explicit formula for
$MC_n,$ reducing our problem to algebra.  Using this formula we obtain the
calculations listed in \prettyref{sec:Computer-Generated-Calculations}
yielding \prettyref{thm:non-zero-obstructions}, the computational backbone
of the results in \prettyref{sec:intro}.  The calculations are stated in
terms of the Hazewinkel generators for
$BP^* = \bbZ_{(p)} [v_1, v_2, \ldots]$, with $v_i \in BP^{-2(p^i - 1)}$.
We note that \[r_*[\cp{p-1}] = v_1.\]
\begin{thm}\label{thm:non-zero-obstructions}\
  For $p$ satisfying the Small Prime Condition, we have the following
  expressions for
  $MC_{n} \in BP^* \llbracket \xi \rrbracket / \langle p \rangle
  \xi$.
 \begin{enumerate}
 \item When $p=2,$
   $MC_2(\xi) = \left(v_1^6+v_2^2\right) \xi ^6 + \text{higher order
     terms}$
   \subitem \hspace{1.1pc} and
   $MC_4(\xi) = v_1^4 v_2^2 \xi^{10} + \text{higher order terms}$.

  \item When $p=3,$
    $MC_4(\xi) = 2v_1^9 \xi^{22} + \text{higher order terms}$.
  \item When $p=5,$
    $MC_8(\xi) = 3v_1^{16} \xi^{88} + \text{higher order terms}$.
  \item When $p=7,$
    $MC_{12}(\xi) = 4 v_1^{22} \xi^{192} + \text{higher order terms}$.
  \item When $p=11,$
    $MC_{20}(\xi) = 9 v_1^{34} \xi^{520} + \text{higher order terms}$.
  \item When $p=13,$
    $MC_{24}(\xi) = 11 v_1^{40} \xi^{744} + \text{higher order terms}$.
 \end{enumerate}
\end{thm}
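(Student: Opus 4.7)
The plan is to prove Theorem \ref{thm:non-zero-obstructions} by direct computation: for each $(p,n)$ in the list, evaluate
\[
MC_n(\xi) = r_*\,q_*\,\chi^{2n}\,P_{C_p,MU}[\cp{n}]
\]
in $BP^*\llbracket\xi\rrbracket/\langle p\rangle\xi$ through enough low-order $\xi$-coefficients to read off the claimed leading term and verify its nonvanishing modulo $p$. Because each stated leading coefficient is already a nonzero scalar multiple of a pure monomial in $v_1, v_2$, displaying the match is the whole task.

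First, I would substitute Quillen's formula \prettyref{eq:quillen1} for $\chi^{m+n}P_{C_p,MU}[\cp{n}]$ with $m=n$; this presents $\chi^{2n}P_{C_p,MU}[\cp{n}]$ as an explicit polynomial expression in the $i$-series $[i]_F\xi$, $1 \leq i \leq p-1$, and in the coefficients of the universal formal group law on $MU^*$. Passing through $q_*$ amounts to reducing the result modulo $\langle p\rangle\xi$ in $MU^*\llbracket\xi\rrbracket$.

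Second, I would apply Quillen's idempotent $r_*\colon MU_* \to BP_*$ coefficientwise. Using the vanishing $r_*[\cp{k}] = 0$ for $k \neq p^i - 1$ noted in the excerpt, together with the standard formulas expressing $r_*[\cp{p^i-1}]$ in terms of Hazewinkel generators (with $r_*[\cp{p-1}] = v_1$ as a base case), the entire expression descends to a power series in $\xi$ with coefficients in $\bbZ_{(p)}[v_1, v_2, \ldots]$. I would then extract the $\xi^d$ coefficient for the relevant exponent $d$ ($6$ and $10$ at $p = 2$; $22$ at $p = 3$; $88$ at $p = 5$; and so on) and match it against the claimed monomial.

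The main obstacle is computational scale, not conceptual depth. The relevant $\xi$-degrees grow rapidly with $p$, reaching $\xi^{744}$ at $p = 13$, and the intermediate polynomials from Quillen's formula are enormous; any naive symbolic approach in a general computer algebra system will be hopeless. A practical implementation must truncate aggressively in $\xi$-degree (discarding monomials that cannot contribute at the target degree), restrict attention to the single internal $BP^*$-degree in which the leading coefficient lives so that only finitely many $v_i$-monomials compete, and perform all arithmetic modulo $p$ throughout. To guard against implementation errors I would cross-check against the smallest cases — the $p = 2$, $n = 2$ computation reaches only $\xi^6$ and is tractable by hand — against the sparseness vanishing of \prettyref{sec:Sparseness}, and against the formal-group-law reformulation given by \prettyref{thm:p-typical-form}, any inconsistency with which would flag a bug in the change of basis from $[\cp{k}]$ to $v_i$.
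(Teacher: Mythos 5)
Your overall strategy---compute $MC_n$ directly from Quillen's formula, apply $r_*$, kill the classes $[\cp{k}]$ with $k\neq p^i-1$, express everything in Hazewinkel generators, truncate in $\xi$-degree, and certify the stated leading terms by machine, checking the $p=2$, $n=2$ case by hand---is exactly the paper's strategy (the working formula is Proposition \ref{prop:MCn-rearranged}, implemented in a Sage package, with that same hand-worked example). However, one step of your computational plan would fail as stated: you propose to ``perform all arithmetic modulo $p$ throughout.'' The object to be certified is a canonical representative modulo $\langle p\rangle\xi$, and $\langle p\rangle\xi = p + \xi(\cdots)$; the reduction producing the claimed expressions (Proposition \ref{prop:div-alg}) repeatedly detects that the lowest surviving coefficient $g_0$ is divisible by $p$, writes $g_0 = p\,d_0$, and subtracts $d_0\xi^m\langle p\rangle\xi$. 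The quotient $d_0$ is not determined by $g_0 \bmod p$, so with mod-$p$ arithmetic you can neither carry out this reduction nor verify that, say at $p=2$, every coefficient of $MC_2$ below $\xi^6$ vanishes in the quotient while the $\xi^6$ coefficient is $v_1^6+v_2^2$. Those specific leading terms---not bare nonvanishing---are what the proof of Theorem \ref{thm:landweber-exact-thm} consumes (e.g.\ $f(v_2)^2=f(v_1)^6$ in $E^*/2$). You must compute over $\bbZ_{(p)}$ exactly, or with $p$-adic precision growing with the number of reduction steps, as the paper does; reducing mod $p$ is only legitimate at the very end.

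Two smaller points. First, expanding $P_{C_p,MU}[\cp{n}]$ over $MU^*$ with the universal formal group law and only then applying $r_*$ ``coefficientwise'' is mathematically fine but needlessly heavy: since $r$ is an orientation, $r_*P_{C_p,MU}(x)=\prod_{i}\bigl([i]\xi +_{BP} x\bigr)$, so the $a_i$ are computed directly from $\log_{BP}$ and $\exp_{BP}$ in Hazewinkel generators; this push-forward-first step is what makes degrees like $\xi^{744}$ at $p=13$ reachable. Second, using \prettyref{eq:quillen1} ``with $m=n$'' is not automatic, since that formula is only asserted for $m\gg 0$; the refinement to $m=n$ for $x=[\cp{n}]$ is Lemma \ref{lem:quillen-m=n}, proved by a degree argument together with the fact that $\chi$ is not a zero-divisor, and should be cited or reproved rather than assumed.
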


Since $BP$ carries the universal $p$-typical orientation, the results of
\prettyref{thm:non-zero-obstructions} can be applied to prove non-existence
results for other $p$-typically oriented cohomology theories.

\begin{proof}[Proof of \prettyref{thm:landweber-exact-thm}]
  Let $f\colon MU \to E$ be a map of $\hi$ ring spectra as in
  \prettyref{thm:landweber-exact-thm}, and let $p$ be a prime
  satisfying the Small Prime Condition.  

  For $n \neq p^i - 1,$ we must have $f_*[\cp{n}] = 0,$ since $f$
  factors through $r.$  By assumption, \prettyref{fig:h-infty2-pt}
  commutes so $f_*(MC_n(\xi))$ must be 0 in the quotient ring.
  Equivalently, in $E^*\llbracket\xi\rrbracket$ we have
  \[
  f_*(MC_n(\xi))=g(\xi) \cdot \langle p \rangle \xi
  \] 
  for some $g(\xi)$. Since $\langle p \rangle \xi= p+\xi(\cdots)$ we see
  that the leading coefficient of $f_*(MC_n(\xi))$ must be divisible by
  $p.$ Combining this fact with the calculations in
  \prettyref{thm:non-zero-obstructions}, we will finish the proof by
  showing that $E^*/p$ must be 0 and hence $E^*$ must be a
  $\mathbb{Q}$-algebra.

  For $p = 2$, the calculation for $MC_2$ shows $f(v_2)^2 = f(v_1)^6$ in
  $E^*/2$.  Combining this with the calculation for $MC_4$ shows
  $f(v_1)^{10}=0$ in $E^*/2.$ However, by Landweber exactness,
  multiplication by $f(v_1)$ is an injection on $E^*/2,$ so $E^*/2=0.$

  For $p > 2$ the computation for $MC_{2(p-1)}$ implies that $f(v_1)$ is
  nilpotent in $E^*/p.$ The same argument then shows that $E^*/p=0.$
\end{proof}

\section{\texorpdfstring{$\ei$ and $\hi$ Ring Spectra}{E∞ and H∞ Ring Spectra}}\label{sec:hi-and-ei-ring-spectra}

Let $\mathpzc{S}$ denote the Lewis-May-Steinberger category of
coordinate-free spectra and $\mathpzc{hS}$ the stable homotopy category. 

A spectrum in this category is indexed by finite-dimensional subspaces
of some countably infinite-dimensional real inner product space
$\mathcal{U}$. Let $\pi$ be a subgroup of $\Sigma_n,$ the symmetric group
on $n$ letters.  The space of linear isometries
$\mathcal{L}(\mathcal{U}^n,\mathcal{U})$ is a free contractible
$\Sigma_n$-space and by restriction a free contractible $\pi$-space
which we will denote $E\pi$.

For each subgroup $\pi$ of $\Sigma_n$ there is an extended power
functor on unbased spaces, based spaces, and spectra.  For an unbased
space $Z$, a based space $W$, and a spectrum $X$, the definitions are
\begin{align*}
  D_\pi Z &= E\pi  \times_\pi  Z^{\times n}\\
  D_\pi W &= E\pi_+ \wedge_\pi W^{\wedge n}\\
  D_\pi X &= E\pi \ltimes_{\pi}X^{\wedge n}.
\end{align*}
where $\ltimes$ is the twisted half-smash product of \cite{LMS86}.  The
functor from unbased to based spaces given by adjoining a disjoint
basepoint relates the extended cartesian power on unbased spaces and the
extended smash power on based spaces.  For an unbased space $Z$, there is a
homeomorphism of based spaces,
\[
D_\pi (Z_+)  \cong  ( D_\pi Z )_+.
\]
We will be using power operations on unreduced cohomology theories; as a
consequence we will focus on unbased rather than based spaces.  The
extended Cartesian power on unbased spaces is related to the extended
smash power on spectra by the following: For an unbased space $Z$
\begin{equation}\label{eq:SigmaD-equals-DSigma}
  D_\pi  \Sigma^\infty_+ (Z)  =
  D_\pi  \Sigma^\infty (Z_+)  \cong
  \Sigma^\infty  D_\pi  (Z_+) \cong
  \Sigma^\infty  (D_\pi Z )_+ =
  \Sigma^\infty_+  D_\pi Z.
\end{equation}
With \prettyref{eq:SigmaD-equals-DSigma} in mind, we may implicitly
apply the functor $\Sigma^\infty_+$ and will use the notation $D_\pi Z$ to
denote either an unbased space or a spectrum, as determined by
context.


\begin{defn}
  Let $D$ be the functor on $\mathpzc{S}$ such that
\[DX=\bigvee_{n\geq0}D_{\Sigma_n} X.\]
\end{defn}

The following result is standard (for example, see \cite{Rez98}).

\begin{prop}
There are natural transformations 
\begin{gather*}
  \mu\colon D^2\rightarrow D\\
  \eta\colon Id\rightarrow D
\end{gather*} 
that make $D$ a monad 
on $\mathpzc{S}$.
\end{prop}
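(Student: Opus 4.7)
The plan is to build $\eta$ and $\mu$ from the operadic structure of the linear isometries operad $\mathcal{L}$, whose $n$-th space is $\mathcal{L}(\mathcal{U}^n,\mathcal{U}) = E\Sigma_n,$ and then to verify the monad identities by reducing them to the operad axioms for $\mathcal{L}$.

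First I would define $\eta_X \colon X \to DX$ as the inclusion of the $n=1$ summand, using the canonical identification $D_{\Sigma_1} X \cong X$: the group $\Sigma_1$ is trivial and $\mathcal{L}(\mathcal{U},\mathcal{U})$ has a distinguished point. Naturality in $X$ is immediate from the fact that the wedge-summand inclusions $D_{\Sigma_n} X \hookrightarrow DX$ are natural.

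Next I would construct $\mu_X \colon D^2X \to DX$ in two stages. Expanding the definition,
\[
D^2 X \;=\; \bigvee_{k\geq 0} D_{\Sigma_k}\!\left(\bigvee_{n\geq 0} D_{\Sigma_n} X\right).
\]
The first stage is the standard distributivity decomposition of an extended power over a wedge: for any collection $\{Y_n\}$ of spectra,
\[
D_{\Sigma_k}\!\left(\bigvee_n Y_n\right) \;\cong\; \bigvee_{(n_1,\ldots,n_k)/\Sigma_k}\; E\Sigma_k \ltimes_H \bigl(Y_{n_1}\wedge\cdots\wedge Y_{n_k}\bigr),
\]
where $H\leq \Sigma_k$ is the stabilizer of the tuple $(n_1,\ldots,n_k)$. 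Taking $Y_n = D_{\Sigma_n} X$, each summand becomes an iterated twisted half-smash product of extended powers of $X$. The second stage applies the operadic structure map of $\mathcal{L}$,
\[
\gamma\colon \mathcal{L}(k)\times \mathcal{L}(n_1)\times\cdots\times\mathcal{L}(n_k) \longrightarrow \mathcal{L}(n_1+\cdots+n_k),
\]
which is equivariant for the inclusion of the wreath-product-type stabilizer into $\Sigma_N$ with $N = \sum n_i$. Passing to spectra via the twisted half-smash product, this assembles the iterated extended power into a single $D_{\Sigma_N} X$, and summing over all $k$ and all tuples yields $\mu_X$.

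Finally I would verify the monad axioms. The unit law $\mu\circ D\eta = \mu\circ \eta D = \mathrm{id}$ follows from the operadic identity in $\mathcal{L}(1)$, which acts as a two-sided unit for $\gamma$ and which distinguishes the $n=1$ summand inserted by $\eta$. Associativity $\mu\circ D\mu = \mu\circ \mu D$ reduces, after applying the distributivity decomposition on both sides, to the associativity of $\gamma$ as an operad structure map. The main technical obstacle is keeping track of equivariance across the distributivity decomposition: one must check that the maps produced are independent of the chosen orderings of tuples and coherently equivariant for the wreath-product symmetries relating iterated extended powers to single ones. This coherence is precisely what the operadic axioms on $\mathcal{L}$ encode, and the detailed verification in the context of twisted half-smash products of spectra is carried out in \cite{LMS86} and \cite{Rez98}, which we would cite rather than reproduce.
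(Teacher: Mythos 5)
Your proposal is correct and amounts to the standard operadic construction that the paper itself does not spell out: the paper merely states the result is standard and cites \cite{Rez98}, and your sketch (unit via the distinguished isometry in $\mathcal{L}(\mathcal{U},\mathcal{U})$, multiplication from the distributivity of extended powers over wedges together with the operad structure maps $\gamma$ of the linear isometries operad, with the equivariance and coherence of the twisted half-smash product isomorphisms handled as in \cite{LMS86}) is exactly what those references verify. The only wording to adjust is that $D_{\Sigma_1}X=\mathcal{L}(\mathcal{U},\mathcal{U})\ltimes X$ is not literally $X$; the unit is the inclusion through $\{\mathrm{id}_{\mathcal{U}}\}\ltimes X\cong X$, which is clearly what you intend.
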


\begin{defn}
  The category of $E_\infty$ ring spectra is the category of
  $D$-algebras in $\mathpzc{S}$.
\end{defn}

\begin{prop}
  The monad $D$ on $\mathpzc{S}$ descends to a monad $\widetilde{D}$ on
  the stable homotopy category $\mathpzc{hS}$.
\end{prop}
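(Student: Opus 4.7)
The plan is to verify that each extended power functor $D_\pi$ is homotopy invariant on a suitable subcategory of $\mathpzc{S}$, use this to obtain a functor $\widetilde{D}$ on $\mathpzc{hS}$, and then transport the monad structure along the localization $\mathpzc{S}\to\mathpzc{hS}$.

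First I would establish that each $D_\pi$ preserves weak equivalences between CW spectra. The key geometric input is that $E\pi=\mathcal{L}(\mathcal{U}^n,\mathcal{U})$ is a free, contractible $\pi$-space, so the twisted half-smash product $E\pi \ltimes_\pi X^{\wedge n}$ models the derived $\pi$-orbits of $X^{\wedge n}$. When $X$ is a CW spectrum, the iterated smash power $X^{\wedge n}$ is again (up to equivalence) a $\pi$-CW spectrum, and the twisted half-smash product with a free contractible space preserves weak equivalences on such inputs. This homotopy invariance is exactly the content of the cellular chain theory developed in \cite{LMS86} and used throughout \cite{BMMS86}. Because a wedge of weak equivalences of CW spectra is again a weak equivalence, we conclude that $D=\bigvee_n D_{\Sigma_n}$ sends weak equivalences of CW spectra to weak equivalences.

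Next, I would define $\widetilde{D}$ on objects by choosing a CW approximation $\Gamma X \xrightarrow{\simeq} X$ for each spectrum $X$ and setting $\widetilde{D}X = D\Gamma X$. The preceding homotopy invariance shows that $\widetilde{D}X$ is well-defined up to canonical isomorphism in $\mathpzc{hS}$, and that any morphism in $\mathpzc{hS}$ can be represented by a map of CW spectra, whose image under $D$ yields a morphism in $\mathpzc{hS}$ independent of the choice of representative. To descend the monad structure, I also need that $DX$ is itself equivalent to a CW spectrum when $X$ is, so that the iterate $D^2X$ is again homotopy invariant; this follows from the explicit cellular models for extended powers given in \cite[Ch.\ I]{BMMS86}. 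With these ingredients, the natural transformations $\mu\colon D^2\to D$ and $\eta\colon \mathrm{Id}\to D$ can be evaluated on CW spectra and their underlying morphisms in $\mathpzc{hS}$ define the required $\widetilde{\mu}$ and $\widetilde{\eta}$; naturality follows from the naturality in $\mathpzc{S}$ together with the existence of CW replacements for morphisms.

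Finally, the associativity and unit diagrams for $\widetilde{D}$ commute because they are images under the localization functor $\mathpzc{S}\to\mathpzc{hS}$ of diagrams that already commute strictly in $\mathpzc{S}$. The main obstacle in this plan is the homotopy invariance of $D_\pi$ and the preservation of CW structure under $D$: both are delicate in the Lewis-May-Steinberger framework, since one must control cofibrancy through iterated twisted half-smash products and ensure compatibility with the chosen universe $\mathcal{U}$. These technicalities are, however, precisely what is worked out in \cite[Ch.\ VI]{LMS86} and \cite[Ch.\ I]{BMMS86}, so I would invoke those results rather than reprove them.
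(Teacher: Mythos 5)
Your proposal is correct and follows essentially the same route as the paper: both invoke the Lewis--May--Steinberger results that the extended power functors preserve (homotopy) equivalences between cell spectra and carry cell spectra to cell spectra, then model $\mathpzc{hS}$ by cell spectra with homotopy classes of maps and observe that $\mu$ and $\eta$ descend. The paper simply states this in a few lines, while you spell out the cell-approximation bookkeeping explicitly; the mathematical content is the same.
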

\begin{proof}
  In \cite{LMS86} it is shown that this functor preserves
  homotopy equivalences between cell spectra and takes cellular spectra
  to cellular spectra.  It follows that $D$ has a
  well-defined functor on the stable homotopy category, modeled by
  cellular spectra with homotopy classes of maps and that the
  structure maps above pass to the stable category.
\end{proof}

\begin{defn}\label{def:hi-ring-spectra}
  The category of $H_{\infty}$ ring spectra is the category of
  $\widetilde{D}$-algebras in $\mathpzc{hS}$.
\end{defn}

\begin{prop}\label{prop:s-algebras-are-h-infty}
  Let $\Gamma\colon \mathpzc{S} \to \mathpzc{hS}$ denote the canonical
  functor.  If $X$ is an $E_\infty$ ring spectrum, then $\Gamma X$ is an
  $H_{\infty}$ ring spectrum.
\end{prop}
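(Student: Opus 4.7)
The plan is to observe that this is essentially a formal consequence of the preceding proposition, which asserts that $D$ descends to a monad $\widetilde{D}$ on $\mathpzc{hS}$. The content of that descent statement is precisely that $\Gamma$ intertwines the two monads: there is a natural identification $\widetilde{D} \circ \Gamma \cong \Gamma \circ D$, and the unit and multiplication transformations $\eta$, $\mu$ of $D$ pass through $\Gamma$ to the unit and multiplication of $\widetilde{D}$. Given this, the construction of an $H_\infty$ structure on $\Gamma X$ from an $E_\infty$ structure on $X$ is immediate.

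Concretely, suppose $X$ is an $E_\infty$ ring spectrum with structure map $\xi\colon DX \to X$ satisfying the associativity square $\xi \circ D\xi = \xi \circ \mu_X$ and the unit identity $\xi \circ \eta_X = \mathrm{id}_X$ in $\mathpzc{S}$. Applying the functor $\Gamma$ and using $\Gamma D = \widetilde{D}\,\Gamma$ (together with the compatibility of $\eta$, $\mu$ with $\Gamma$) produces a morphism
\[
\widetilde{\xi} := \Gamma(\xi)\colon \widetilde{D}(\Gamma X) = \Gamma(DX) \longrightarrow \Gamma X
\]
in $\mathpzc{hS}$. Since $\Gamma$ is a functor, it preserves commutative diagrams, so the associativity and unitality axioms for $\xi$ become the corresponding axioms for $\widetilde{\xi}$, namely $\widetilde{\xi} \circ \widetilde{D}\widetilde{\xi} = \widetilde{\xi} \circ \mu_{\Gamma X}$ and $\widetilde{\xi} \circ \eta_{\Gamma X} = \mathrm{id}_{\Gamma X}$. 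Thus $(\Gamma X, \widetilde{\xi})$ is a $\widetilde{D}$-algebra, i.e.\ an $H_\infty$ ring spectrum in the sense of Definition \ref{def:hi-ring-spectra}.

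There is really no hard step here: all the substantive work — showing that $D$ preserves homotopy equivalences between cell spectra so that it induces $\widetilde{D}$ on $\mathpzc{hS}$ in a way compatible with the monad data — was already discharged in the previous proposition. The only point worth spelling out in the write-up is the identification $\widetilde{D}\Gamma \cong \Gamma D$ and the fact that the structure 2-cells for $\widetilde{D}$ are defined precisely so as to be the $\Gamma$-images of those of $D$; once that is acknowledged, the verification of the $H_\infty$ axioms is nothing more than an application of $\Gamma$ to the $E_\infty$ axioms.
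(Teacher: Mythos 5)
Your argument is correct and is exactly the reasoning the paper leaves implicit: the paper states this proposition without proof, treating it as an immediate consequence of the preceding proposition that $D$ descends to $\widetilde{D}$ together with the definition of $H_\infty$ ring spectra as $\widetilde{D}$-algebras. Applying $\Gamma$ to the $E_\infty$ structure map and axioms, via the identification of $\Gamma D$ with $\widetilde{D}\Gamma$, is precisely the intended (standard) verification, so there is nothing to add beyond what you wrote.
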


\begin{remark}
  Nearly all known $H_\infty$ ring spectra arise by applying $\Gamma$ to
  an $E_\infty$ ring spectrum.
  In \cite{Noe09} the second author provides an example of one that is
  not.  
\end{remark}

\begin{defn}\label{def:external-power-op}
  Suppose $X$ is a spectrum, $E$ is an $H_\infty$ ring spectrum,  and
  $f\colon X\rightarrow E$ is a map representing a cohomology class in
  $E^0(X)$.  Define the $\pi\text{th}$ external cohomology operation
  \[\mathcal{P}_{\pi,E}\colon E^0(X)\rightarrow E^0(D_{\pi}X)\] by 
  \[ (X\xrightarrow{f} E) \mapsto (D_\pi X\xrightarrow{D_\pi f} D_\pi
  E\rightarrow D_{\Sigma_n} E \hookrightarrow DE\xrightarrow{\mu} E).\]
\end{defn}

If $Y$ is a space, $Y^{\times n}$ is equipped with the $\pi$ action
induced by the inclusion $\pi\rightarrow \Sigma_n$.  Regarding $Y$
as a trivial $\pi$-space, the diagonal map \[\Delta\colon Y\rightarrow
Y^{\times n}\] is $\pi$-equivariant.

\begin{defn}\label{def:internal-power-op}
  Suppose $Y$ is a space and $E$ is an $H_\infty$ ring spectrum. Define 
  $\delta\colon B\pi\times Y\rightarrow D_\pi Y$ as the following
  composite:
  \begin{equation*}
	\delta\colon (B\pi\times Y) \simeq E\pi \times_\pi Y 
	  \xrightarrow{E\pi\times\Delta} 
  		E\pi \times_\pi Y^n \cong 
		D_\pi Y.
  \end{equation*}

  Define the $\pi\text{th}$ internal cohomology operation
  $P_{\pi,E}\colon E^0(Y)\rightarrow E^0(B\pi\times Y)$ as the composite
  \[ E^0(Y)\xrightarrow{\mathcal{P}_{\pi,E}} E^0(D_\pi
  Y)\xrightarrow{\delta^*} E^0(B\pi \times Y) .\]
\end{defn}

\begin{notation}
We will drop the subscript $E$ from the power operations
$\mathcal{P}_{\pi, E}$ and $P_{\pi, E}$, when it is clear from the context.
\end{notation}

\subsection{\texorpdfstring{$H_\infty^d$ ring spectra}{H∞\^{}d ring
    spectra}}

An $\hi^d$ ring structure on a spectrum $E$ is a compatible family of
maps
\[
D_{\Sigma_n}\Sigma^{di} E \to \Sigma^{din} E
\]
for all $i \in \mathbb{Z}$ \cite[I.4.3]{BMMS86}.  When $i = 0$, these
maps define an $\hi$ structure on $E$, so every $\hi^d$ ring
spectrum is an $\hi$ ring spectrum.  The compatibility conditions are
graded analogs of those for an $\hi$ ring spectrum, and an $\hi^d$
structure on $E$ determines an $\hi$ structure on the infinite
wedge\footnote{ We note that the argument for the converse to this
statement, given in \cite[II.1.3]{BMMS86}, is incorrect.  We were
unable to find a proof for the converse to hold in this generality.}
\[
\bigvee_{i \in \mathbb{Z}} \Sigma^{di} E.
\]
Maps of $\hi^d$ ring spectra are those which commute with the family
of structure maps and so
the category of $H_\infty^d$ ring spectra is a subcategory of the
category of $H_\infty$ ring spectra.

Suppose $Y$ is a space and $E$ is an $H_\infty^d$ ring spectrum. For
each $\pi \leqslant \Sigma_n$ and for each integer $i$,  we have the
following power operations: 
\begin{align*}
  \mathcal{P}_{\pi,E}&\colon E^{di}(Y)\rightarrow E^{din}(D_\pi
  Y)\\
  P_{\pi,E}&\colon E^{di}(Y)\rightarrow E^{din}(B\pi\times Y).
\end{align*}
When $i = 0$, these maps are simply the above power operations defined
using the underlying $\hi$ structure on $E$.

\subsection{\texorpdfstring{The Thom isomorphism and $H_\infty^2$ orientations}{The Thom isomorphism and H∞² orientations}}\label{sec:thom-iso-and-h-infty-2}
Let $V_k$ denote the standard representation of $\Sigma_k$ on
$\mathbb{C}^k$ and $B\Sigma_k^{V_k\otimes \mathbb{C}^i}$ be the Thom
spectrum of the complex vector bundle $V_k\otimes\mathbb{C}^i$ over
$B\Sigma_k.$ Recall \cite[Ch.~X]{LMS86} that 
\begin{equation} \label{eq:thom-iso}
  D_{\Sigma_k} S^{2i}\cong B\Sigma_k^{V_k\otimes \mathbb{C}^i}.
\end{equation}

Since $V_k\otimes\mathbb{C}^i$ is a complex vector bundle, for any complex
oriented cohomology theory $E$ we have a Thom
isomorphism \[E^*(\Sigma^{2ki}B\Sigma_k) \cong
E^*(B\Sigma_k^{V_k\otimes \mathbb{C}^i}).\]  
Taking $\mu_{i,k}$ to be a map representing the Thom class, the Thom
isomorphism yields the following commutative diagram.  The horizontal map
is induced by the natural inclusion $S^{2ki} \to D_{\Sigma_k}S^{2i}$
and $e$ is the unit $S \to E$.
\[\xymatrix{ 
S^{2ki}\ar[rr]\ar[dr]_-{\Sigma^{2ki}e} & & D_{\Sigma_k} S^{2i}\ar[ld]^-{\mu_{i,k}}\\
& \Sigma^{2ki}E. & } \]
Note that although the Thom classes $\mu_{i,k}$ clearly depend on the
cohomology theory $E,$  we will abuse notation and use the same symbol
regardless of the cohomology theory.  

When $E=MU,$ McClure shows \cite[VII]{BMMS86} that the $\mu_{i,k}$
combine with the $\hi$ structure maps \[\mu_k\colon D_{\Sigma_k}
MU\rightarrow DMU \xrightarrow{\mu} MU\] to define an $H_\infty^2$
structure for $MU$: The structure maps are those given by the top
horizontal composite in \prettyref{fig:h-infty2-orientations}.

\begin{figure}[H]
\[
\xymatrix{
D_{\Sigma_k}(\Sigma^{2i}MU)\ar[r]\ar[d]_-{D_{\Sigma_k}(f)} &
D_{\Sigma_k} S^{2i}\wedge D_{\Sigma_k}
MU\ar[rr]^-{\mu_{i,k}\wedge \mu_k}\ar[d]_-{D_{\Sigma_k} S^{2i}\wedge f} & &
\Sigma^{2ki}MU\wedge MU \ar[r]\ar[d]^-{\Sigma^{2ki}f\wedge f}&
\Sigma^{2ki}MU\ar[d]^-{\Sigma^{2ki}f}\\
D_{\Sigma_k}(\Sigma^{2i}E)\ar[r] & D_{\Sigma_k} S^{2i}\wedge
D_{\Sigma_k}
E\ar[rr]^-{\mu_{i,k}\wedge \mu_k} & & \Sigma^{2ki}E\wedge E
\ar[r]& \Sigma^{2ki}E.
}
\]
\caption{$H_\infty^2$ orientations.\label{fig:h-infty2-orientations}}
\end{figure}

In this way the Thom isomorphism for complex oriented theories gives an
equivalence between $\hi$ orientations and $\hi^2$ orientations.

\begin{thm}\label{thm:h-infty-eq-h-infty2}
  An orientation $MU\rightarrow E$ is $H_\infty$ if and only if it is
  $H_\infty^2$.
\end{thm}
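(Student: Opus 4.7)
The plan is to handle the two implications separately. The forward direction is free: by the discussion of $\hi^d$ structures immediately preceding the theorem, every $\hi^2$ ring spectrum has an underlying $\hi$ structure obtained by restricting to $i=0$, and any map of $\hi^2$ ring spectra commutes in particular with the $i=0$ structure maps. So an $\hi^2$ orientation is automatically an $\hi$ orientation.

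For the converse, suppose $f\colon MU \to E$ is an $\hi$ orientation. I would build an $\hi^2$ structure on $E$ such that $f$ becomes a map of $\hi^2$ ring spectra. The choice of structure map is dictated by \prettyref{fig:h-infty2-orientations}: define the candidate map $D_{\Sigma_k}(\Sigma^{2i}E) \to \Sigma^{2ki}E$ as the bottom-row composite, using the $E$-Thom class $\mu_{i,k}$ for $V_k \otimes \mathbb{C}^i$ (available because $E$ is complex oriented via $f$), the given $\hi$ structure map $\mu_k^E$, and the ring multiplication on $E$. The $E$-Thom class is characterized by being the $f_*$-image of the corresponding $MU$-Thom class.

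To see that these maps assemble into an $\hi^2$ structure on $E$, I would reuse McClure's argument \cite[VII]{BMMS86} verbatim. That argument employs only three ingredients: multiplicativity of Thom classes with respect to direct sums of complex bundles, the $\hi$ axioms for $\mu_k^E$, and the associativity and unitality of the ring structure on $E$. All three hold for any complex oriented $\hi$ ring spectrum, so McClure's verification goes through with $MU$ replaced by $E$ at every step.

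Finally, compatibility of $f$ with these structure maps is exactly the commutativity of \prettyref{fig:h-infty2-orientations}. The leftmost square is naturality of the distribution map $D_{\Sigma_k}(\Sigma^{2i}X) \to D_{\Sigma_k}S^{2i} \wedge D_{\Sigma_k}X$. The middle square commutes on the $D_{\Sigma_k}S^{2i}$ factor because the $E$-Thom class is by construction $f_*$ of the $MU$-Thom class, and on the $D_{\Sigma_k}E$ factor because $f$ is $\hi$. The rightmost square records multiplicativity of $f$ as a ring map, which is part of being $\hi$. The main obstacle, such as it is, lies in the coherence verification of the previous paragraph: it is not conceptually deep, but requires careful bookkeeping across the infinite family of structure maps indexed by $(i,k)$.
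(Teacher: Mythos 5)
Your forward direction and your last paragraph (checking that \prettyref{fig:h-infty2-orientations} commutes square by square, with the middle square split into a Thom-class square and an $\hi$-structure square) coincide with the paper's argument. The gap is in your middle step, where you assert that McClure's verification that $MU$ is $\hi^2$ uses only Whitney-sum multiplicativity of Thom classes, the $\hi$ axioms, and the ring structure, and can therefore be rerun ``verbatim'' with $E$ in place of $MU$. That is not an accurate description of the $MU$ argument: there the structure maps and Thom classes are geometric, coming from the identification $D_{\Sigma_k}S^{2i}\cong B\Sigma_k^{V_k\otimes\mathbb{C}^i}$ and the fact that $MU$'s Thom classes are the canonical maps of Thom spectra, with the structure maps Thomified from $BU$; no such description is available for a general complex-oriented $E$. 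More tellingly, your coherence step never uses the hypothesis that $f$ is an $\hi$ map, so if it were correct it would prove that \emph{every} complex-oriented $\hi$ ring spectrum becomes $\hi^2$ via \emph{any} orientation. The place this breaks is the compatibility of the candidate maps with the composition maps $D_{\Sigma_j}D_{\Sigma_k}X\to D_{\Sigma_{jk}}X$: after separating sphere factors from extended powers of $E$, what is required is that the $E$-Thom class of the induced bundle (the restriction of $V_{jk}\otimes\mathbb{C}^i$ to $B(\Sigma_j\wr\Sigma_k)$, whose Thom spectrum is $D_{\Sigma_j}D_{\Sigma_k}S^{2i}$) agree with the candidate degree-$2ik$ power operation applied to $\mu_{i,k}$. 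That is a genuine compatibility between the orientation and the power operations---exactly the kind of condition that singles out special orientations in this subject---and it does not follow from Whitney-sum multiplicativity, the $\hi$ axioms for $E$, and associativity alone.

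The repair is the paper's route, and it is where the hypothesis on $f$ is actually spent twice: having shown that \prettyref{fig:h-infty2-orientations} commutes (using that $f$ is $\hi$ and that $f$ carries $MU$-Thom classes to $E$-Thom classes), one deduces each $\hi^2$ coherence for the bottom-row composites from the corresponding, already known, coherence for the top row---the $\hi^2$ structure on $MU$---by an elementary diagram chase: since $\mu_{i,k}$ for $E$ is $\Sigma^{2ki}f$ composed with $\mu_{i,k}$ for $MU$, the sphere-factor identities for $E$ are obtained by post-composing the $MU$ identities with $f$, while the extended-power-of-$E$ factors are handled by the $\hi$ coherences of $E$ itself. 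In other words, the coherence of the structure on $E$ is transferred from $MU$ along $f$ rather than re-proved from scratch; as written, your proposal is missing exactly this transfer, and the ``bookkeeping'' you defer is not routine without it.
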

\begin{proof}
  By neglect of structure every $\hi^2$ orientation is $\hi$.  Consider an
  $H_\infty$ complex orientation $f\colon MU\rightarrow E$.
  \prettyref{fig:h-infty2-orientations} is induced by this structure and
  the left and right squares in this diagram commute for any orientation on
  $E$. The center square is the smash product of the following two squares:
  \begin{equation*}
    \xymatrix{
      D_{\Sigma_k} S^{2ik}\ar[r]^{\mu_{i,k}}\ar@{=}[d]&
      \Sigma^{2ki}MU\ar[d]_{\Sigma^{2ki}f} & 
      & D_{\Sigma_k} MU\ar[r]^-{\mu_k}\ar[d]^-{f} & MU\ar[d]^-{\Sigma^{2ki}f}\\
      D_{\Sigma_k} S^{2ik}\ar[r]^{\mu_{i,k}} & \Sigma^{2ki}E & 
      & D_{\Sigma_k} E\ar[r]^-{\mu_k} & E
    }
  \end{equation*}
  The left square commutes since $f$ sends $MU$-Thom classes to $E$-theory
  Thom classes.  The right square commutes since $f$ is an $H_\infty$ ring
  map.
  
  It follows that the center square and therefore the entire diagram
  commutes in \prettyref{fig:h-infty2-orientations}.  Another elementary
  diagram chase, using the $\hi^2$ structure of $MU$, shows that the bottom
  horizontal composite defines an $H_\infty^2$ structure on $E$.
\end{proof}

\section{The Formal Group Law Condition}\label{sec:A-Formal-Group-Theoretic-Condition}

\subsection{Formal group laws}
We recall some well-known facts about complex-oriented
cohomology theories and formal group laws (for example, see \cite[Part II]{Ada95} or
\cite{Rav00}).

\begin{defn}
  A (commutative, $1$-dimensional) formal group law $F$ over a commutative
  ring $k$ is a connected bicommutative, associative, topological Hopf
  algebra $\mathcal{A}$ with a specified isomorphism $\mathcal{A}\cong
  k\llbracket x\rrbracket$.

\end{defn}
By forgetting the grading, a graded Hopf algebra of the above form is
a formal group law.  For such Hopf algebras the completed tensor product
provides the following isomorphism:
\[ \mathcal{A}\widehat{\otimes}\mathcal{A}
	  \cong k\llbracket x_1, x_2 \rrbracket.\]

\begin{defn}
  Given a ring map $f\colon k\rightarrow k^\prime$ and a formal group law
  $\mathcal{A}$ over $k,$ the push-forward of $\mathcal{A}$
  along $f$ is the formal group law $\mathcal{A}\widehat{\otimes}_k^f k^\prime$ over
  $k^\prime$.
\end{defn}

One can formally define a ring $L$ and a formal group law $\mathcal{A}$
over $L$ such that 
\begin{align} 
  \mathpzc{Ring}(L,k) &\cong \text{Formal group laws over } k\\
  f &\rightarrow \mathcal{A}\widehat{\otimes}_L^f k 
\end{align}

%
\begin{notation}
  We will identify a formal group law $F$ with the formal power series:
  \[ x_1+_F x_2 = \Delta(x)\in k\llbracket x_1, x_2 \rrbracket.\]
\end{notation}

\begin{defn}
  Given a commutative ring $k,$ we formally adjoin the $q\text{th}$ roots of
  unity. A formal group law $F$ over $k$ is $p$-typical, if for all
  primes $q\neq p,$ the formal sum over the $q\text{th}$ roots of unity 
  \vspace{3pt}
  \[
  \sum_{\zeta^q=1}\hspace{-6pt}{\phantom{|}}^{F}\  \zeta x \]
  is trivial.
\end{defn}

\subsection{Connection to complex orientations}\label{sec:conn-cpx-or}
Recall that if $X$ is a space and $E$ is a spectrum, the function
spectrum \[E^X=F(\Sigma^\infty_+X,E)\] defines a cohomology theory
satisfying 
\begin{equation}\label{eq:function-spectrum-cohomology-theory}
  E^{X,*}(Y)\cong E^*(X\times Y),
\end{equation}
for every space $Y.$
Moreover, if $E$ admits the structure of a ring spectrum (or an
$H_\infty$ ring spectrum) then
so does $E^X$.

\begin{prop}[{\cite[3.1]{Lan76}}]\label{prop:MU-BP-BCp}
  The spectra $MU^{BC_p}$ and $BP^{BC_p}$ are ring spectra
  satisfying the following natural isomorphisms:
  \begin{align*}
	MU^{BC_p,*}X &\cong MU^*(BC_p)\widehat{\otimes}_{MU_*}MU^*(X)\\
	BP^{BC_p,*}X &\cong BP^*(BC_p)\widehat{\otimes}_{BP_*}BP^*(X).
  \end{align*}
\end{prop}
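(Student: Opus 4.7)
The proposition packages two things: the existence of a ring spectrum structure on $E^{BC_p}$ for $E \in \{MU, BP\}$, and a Künneth-style identification of its cohomology. I would treat these separately.

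For the ring spectrum structure, the construction is formal and works whenever $E$ is a ring spectrum and $X$ is a space. The multiplication on $F(\Sigma^\infty_+ X, E)$ is built from the diagonal $\Delta\colon X \to X \times X$ and the multiplication $\mu\colon E \wedge E \to E$: one composes the canonical map
\[
F(\Sigma^\infty_+ X, E) \wedge F(\Sigma^\infty_+ X, E) \to F\bigl(\Sigma^\infty_+ X \wedge \Sigma^\infty_+ X,\, E \wedge E\bigr) \cong F\bigl(\Sigma^\infty_+ (X\times X),\, E \wedge E\bigr)
\]
with $F(\Sigma^\infty_+ \Delta, \mu)$. The unit is the evident composite $S \to E \to F(\Sigma^\infty_+ X, E)$. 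Associativity, commutativity, and (when needed) the $H_\infty$ enrichment follow by standard diagram chases from the corresponding structure on $E$ together with the coassociativity and cocommutativity of $\Delta$.

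For the cohomological identification, by the general natural isomorphism recalled in \prettyref{eq:function-spectrum-cohomology-theory} the claim reduces to showing
\[
E^*(BC_p \times X) \;\cong\; E^*(BC_p) \widehat{\otimes}_{E^*} E^*(X)
\]
for $E = MU$ and $E = BP$. The key input is that for such complex-oriented $E$, the module $E^*(BC_p)$ is \emph{topologically free} over $E^*$: by the complex orientation and the formal group law calculation, $E^*(BC_p) \cong E^*\llbracket\xi\rrbracket/[p]\xi$. The plan is to filter $BC_p$ by its skeleta $L^n$ (lens spaces); for each $n$, the cohomology $E^*(L^n)$ is a free $E^*$-module of finite rank in each degree, so the ordinary Künneth isomorphism
\[
E^*(L^n \times X) \;\cong\; E^*(L^n) \otimes_{E^*} E^*(X)
\]
holds without any completion. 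One then assembles these isomorphisms across $n$ via the Milnor exact sequence, which identifies $E^*(BC_p \times X)$ with an inverse limit along the skeletal tower. On the algebraic side, the inverse limit of the tensor products yields exactly the completed tensor product $E^*(BC_p) \widehat{\otimes}_{E^*} E^*(X)$, provided the relevant $\lim^1$ vanishes.

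The main obstacle is this last $\lim^1$ vanishing and the matching of completions: I would verify the Mittag-Leffler condition on the tower $\{E^*(L^n \times X)\}$ by checking that the restriction maps induced by $L^n \hookrightarrow L^{n+1}$ are surjective on $E$-cohomology, which follows from the freeness of each $E^*(L^n)$ over $E^*$ together with the standard cofiber sequence for the skeletal inclusion. Once the limit and the completed tensor product are identified, naturality in $X$ and $E$ is automatic, and the ring structure on the right-hand side (induced by the Hopf algebra structure on $E^*(BC_p)$ coming from the diagonal of $BC_p$) is compatible with the ring structure on the left constructed in the first step.
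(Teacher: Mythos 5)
Your first step (the ring structure on $E^{BC_p}$ via the diagonal of $BC_p$ and the multiplication of $E$) is fine and standard. The paper itself offers no proof of this proposition at all --- it is quoted from Landweber [Lan76, 3.1] --- so the real question is whether your K\"unneth argument would work, and it has a genuine gap: the claim that $E^*(L^n)$ is a free $E^*$-module for the lens-space skeleta $L^n$ of $BC_p$ is false. Already for $L^3=S^3/C_p$, the restriction of the orientation class $\xi$ is nonzero (it hits the generator of $H^2(L^3)\cong\mathbb{Z}/p$) but is killed by $p$, since $[p]\xi=p\xi+(\text{higher AHSS filtration})$ vanishes in $MU^*(BC_p)$ and the higher-filtration terms die on a $3$-complex; so $MU^*(L^3)$ has $p$-torsion and is not free, or even flat, over $MU^*$. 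This reflects the global fact that $MU^*(BC_p)\cong MU^*\llbracket\xi\rrbracket/[p]\xi$ is not flat over $MU^*$. Consequently the term-wise identification $E^*(L^n\times X)\cong E^*(L^n)\otimes_{E^*}E^*(X)$ does not hold (there are Tor obstructions), and the Mittag--Leffler/surjectivity step, which you also base on that freeness, is unsupported; the skeletal assembly therefore collapses at its first stage.

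The strategy you propose does work one level up, and that is essentially how Landweber's cited result goes: the skeleta $\mathbb{C}P^n$ of $\mathbb{C}P^\infty$ do have free $E$-cohomology, so the Milnor sequence argument legitimately gives $E^*(\mathbb{C}P^\infty\times X)\cong E^*(X)\llbracket x\rrbracket$. One then descends to $BC_p$ using that $BC_p$ is the sphere bundle of a line bundle over $\mathbb{C}P^\infty$ whose $E$-theory Euler class is $[p](x)$: the resulting Gysin sequence, together with an argument that $[p](x)$ acts without unexpected kernel (and control of the odd-degree boundary terms), identifies $E^*(BC_p\times X)$ with $E^*(X)\llbracket\xi\rrbracket/[p]\xi\cong E^*(BC_p)\widehat{\otimes}_{E^*}E^*(X)$ for $E=MU,BP$. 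That last non-zero-divisor/boundary analysis is precisely the delicate point, which is why the paper cites [Lan76, 3.1] rather than reproving it; if you want a self-contained proof, you should either reproduce that Gysin-sequence argument or invoke Landweber as the paper does, but the freeness-of-lens-spaces shortcut cannot be repaired.
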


In complex cobordism there is a tautological element $x$ giving an
isomorphism
\begin{align*}
  MU^{*}(\mathbb{C}P^{\infty}) &\cong MU^{*}\llbracket x
  \rrbracket,
  \intertext{and we fix an element $\xi$ such that} 
 MU^*(BC_p) &\cong MU^*\llbracket \xi \rrbracket / [p]\xi.
  \intertext{Hence we have}
  MU^{BC_{p},\,*}(\mathbb{C}P^{\infty}) &\cong
  MU^{*}\llbracket \xi,\: x \rrbracket /[p]\xi.
\end{align*}
An orientation $f\colon MU\rightarrow E$ fixes generators $x$ and $\xi$ in
$E$-cohomology that define analogous isomorphisms.

The above tautological isomorphism in complex cobordism combined with
the multiplication on $\cpinf$ classifying a tensor product of line
bundles defines a formal group law over $MU^*$. An orientation
$MU\rightarrow E,$ induces a map $MU^*\rightarrow E^*$ which defines a
formal group law structure (also denoted by $E$) on $E^*(\cpinf)$ by
pushing forward the formal group law on $MU,$ or equivalently
\cite[II.4.6]{Ada95}, by fixing the generator $x\in E^*(\cpinf)$ above.

\begin{thm}[{\cite{Qui69a}}] \label{thm:quillen-mu}
  The map \[L\rightarrow MU^*\] classifying the tautological formal group law over $MU^*$ is an isomorphism. 
\end{thm}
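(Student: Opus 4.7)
The plan is to realize both $L$ and $MU^*$ as graded polynomial rings over $\mathbb{Z}$ on generators in degrees $-2i$, $i \geq 1$, and then to verify that the classifying ring map $\phi\colon L \to MU^*$ is an isomorphism on indecomposables. Since a surjective map between such graded polynomial rings with matching Hilbert series is automatically an isomorphism, this suffices.

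The two input computations are classical. Lazard's theorem gives $L \cong \mathbb{Z}[\ell_1, \ell_2, \ldots]$ with $\deg \ell_i = -2i$, via a degree-by-degree analysis of symmetric $2$-cocycles in $\mathbb{Z}[x,y]$. On the topological side, the Thom isomorphism provides $H_*(MU) \cong H_*(BU) \cong \mathbb{Z}[b_1, b_2, \ldots]$; combined with torsion-freeness of $\pi_* MU$ and the collapse of the Atiyah--Hirzebruch spectral sequence, this yields $MU^* \cong \mathbb{Z}[m_1, m_2, \ldots]$ with $\deg m_i = -2i$. Rationally, the classes $[\cp{n}]$ span the indecomposables, and Mishchenko's formula identifies the logarithm of the formal group law on $MU^* \otimes \mathbb{Q}$ as $\sum_{n \geq 0} \frac{[\cp{n}]}{n+1}\, x^{n+1}$. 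Since $L \otimes \mathbb{Q}$ is polynomial on the logarithm coefficients, this already shows $\phi \otimes \mathbb{Q}$ is an isomorphism.

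The hard part is upgrading from $\mathbb{Q}$ to $\mathbb{Z}$, and I would do it one prime at a time. For $n$ with $n + 1$ coprime to $p$, the class $[\cp{n}]$ is already a $p$-local indecomposable (the characteristic number $s_n([\cp{n}]) = n+1$ is a unit in $\mathbb{Z}_{(p)}$), so the corresponding Lazard generator maps to an indecomposable under $\phi$. The remaining degrees are those of the form $-2(p^k - 1)$: here I would pick out a Lazard generator in $L_{(p)}$ detected by the coefficient of $x^{p^k}$ in the universal $p$-series modulo decomposables, and match its image with a Hazewinkel (or Araki) generator $v_k$ of the summand $BP_* \subset MU^*_{(p)}$ produced by Quillen's idempotent splitting. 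The principal obstacle is this last matching step: the explicit identification of Lazard's $p$-local generators with the Hazewinkel generators via the universal $p$-series. Once carried out, integral surjectivity of $\phi$ on indecomposables follows, and together with the rational isomorphism and the polynomial structure of both rings this forces $\phi$ to be an isomorphism.
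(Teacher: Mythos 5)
The paper offers no proof of this statement: it is quoted as Quillen's theorem with a citation to [Qui69a], so your sketch has to stand on its own. Its overall strategy (Lazard plus Milnor, then check surjectivity on indecomposables via characteristic numbers, then conclude from matching Hilbert series) is the standard one and is sound in outline, but as written it has a genuine gap in the middle step. After handling the degrees $-2n$ with $n+1$ prime to $p$, the remaining degrees are \emph{all} $n$ with $p \mid n+1$, not only $n = p^k-1$: for example $p=2$, $n=5$. In such a degree $s_n([\mathbb{C}P^n]) = n+1 \equiv 0 \bmod p$ while a $p$-local polynomial generator of $\pi_{2n}MU_{(p)}$ is detected by $s_n$ being a unit (since $n \neq p^k-1$), so $[\mathbb{C}P^n]$ is not a generator there and your argument produces no element of $\operatorname{im}\phi$ that is. Repairing this is exactly the key lemma your sketch omits: one computes the universal formal group law coefficients $a_{ij}$ modulo decomposables in $H_*MU \cong \mathbb{Z}[b_1,b_2,\ldots]$ (Lazard's symmetric $2$-cocycle lemma), giving $s_n$-values $\pm\binom{n+1}{i}$, and uses that when $n+1$ is divisible by $p$ but is not a power of $p$ some $\binom{n+1}{i}$ is prime to $p$. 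Without this, surjectivity is unproved in infinitely many degrees. (A smaller gap of the same kind: even in the degrees you do treat, you need $[\mathbb{C}P^n]$ to lie in the image of $\phi$ integrally, e.g. via $\sum_n [\mathbb{C}P^n]x^n = \bigl(\frac{\partial F}{\partial y}(x,0)\bigr)^{-1}$; Mishchenko's formula only relates it rationally to the logarithm coefficients, which are not themselves in $L$.)

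The treatment of the degrees $2(p^k-1)$ is also problematic. Invoking the summand $BP_* \subset MU^*_{(p)}$ with its Hazewinkel generators presupposes that $BP^*$ is polynomial on the $v_k$, and the usual proofs of that fact (and of the computation of the homotopy of the image of Quillen's idempotent) rest on the very isomorphism $L \cong MU^*$ you are trying to prove, so the argument as stated is circular unless you supply an independent computation of $BP_*$ together with the identification of its generators; moreover the matching of the $p$-series coefficient with $v_k$, which you yourself flag as the principal obstacle, is precisely where the content lies. The standard route avoids $BP$ altogether: the same mod-decomposables computation of the $a_{ij}$ shows the image of $Q_{2n}L$ in $Q_{2n}H_*MU \cong \mathbb{Z}$ is $\alpha_n\mathbb{Z}$ with $\alpha_n = p$ for $n = p^k-1$ and $\alpha_n = 1$ otherwise, and Milnor's theorem says this is exactly the image of $Q_{2n}\pi_*MU$, so $\phi$ is surjective on indecomposables in every degree at once. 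I would recommend replacing the $BP$ detour with that argument; with it, the rest of your outline (rational isomorphism plus surjectivity plus equal Hilbert series) does complete the proof.
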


Rationally, we can describe this isomorphism explicitly in terms of
the cobordism classes \[[\cp{n}] \in MU^{-2n}.\]

\begin{prop} \label{prop:Q-gens-of-MU}
  There is an algebra isomorphism
  \[
  MU^*\otimes \mathbb{Q}\cong \mathbb{Q}[[\cp{1}],[\cp{2}],\dots].
  \]
\end{prop}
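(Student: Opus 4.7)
The plan is to reduce the statement to the well-known rational structure of the Lazard ring and then use Mishchenko's logarithm formula to identify the polynomial generators with the classes $[\cp{n}]$.

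First, I would invoke \prettyref{thm:quillen-mu} to replace $MU^{*}$ by the Lazard ring $L$ and reduce to proving that $L \otimes \mathbb{Q}$ is a polynomial algebra on classes that, under the Quillen isomorphism, correspond to $\{[\cp{n}]\}_{n \geq 1}$. The key tool is that over any $\mathbb{Q}$-algebra $k$, every formal group law $F$ admits a unique strict isomorphism $\log_F\colon F \to \widehat{\mathbb{G}}_a$ (its logarithm), determined by
\[
\log_F(x) = \int \frac{dx}{F_2(x,0)}, \qquad \log_F(x) = x + \sum_{n \geq 1} m_n(F)\, x^{n+1},
\]
where $F_2$ denotes the partial derivative with respect to the second variable. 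Functoriality of the logarithm shows that the universal property of $L$ rationally is represented by specifying the coefficients $m_n$ freely, so that $L \otimes \mathbb{Q} \cong \mathbb{Q}[m_1, m_2, \ldots]$ as an algebra, with $m_n$ in degree $-2n$.

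Next, I would identify the rational generators $m_n$ in terms of the $[\cp{n}]$ using Mishchenko's theorem: the logarithm of the universal formal group law over $MU^{*} \otimes \mathbb{Q}$ is given by
\[
\log_F(x) = \sum_{n \geq 0} \frac{[\cp{n}]}{n+1}\, x^{n+1}.
\]
This can be proved by computing $F_2(x,0)$ directly from the Thom–Dold formula expressing $F$ in terms of the coproduct on $MU^{*}(\cpinf)$, or alternatively by dualizing: the Hurewicz image of $[\cp{n}]$ in $H_{*}(MU;\mathbb{Q})$ is $(n+1)$ times the standard polynomial generator $b_n$. Granting this, we have $m_n = [\cp{n}]/(n+1)$, and since $n+1$ is a unit in $\mathbb{Q}$, the classes $[\cp{n}]$ and $m_n$ generate the same polynomial subalgebra of $MU^{*} \otimes \mathbb{Q}$.

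The only real obstacle is the identification of the $m_n$ with $[\cp{n}]/(n+1)$; the rest is formal manipulation of formal group laws. This identification is essentially Mishchenko's theorem, which I would either cite from \cite{Ada95} or reprove by noting that the standard complex orientation of $MU$ represents $[\cp{n}]$ as the image under the degree map $MU^{-2n}(\cpinf) \to MU^{-2n}$ of the $(n+1)$st power of the Euler class, and then matching this against the coproduct formula that defines $\log_F$.
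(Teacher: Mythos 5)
Your argument is correct and is precisely the standard one the paper implicitly relies on: the paper states this proposition without proof, as part of its recollection of well-known facts with a pointer to \cite[Part II]{Ada95} and \cite{Rav00}, and your route (rationally the Lazard ring is polynomial on the logarithm coefficients $m_n$, together with Mishchenko's identification $m_n=[\cp{n}]/(n+1)$ and the fact that $n+1$ is a unit in $\mathbb{Q}$) is exactly how the result is established in those references. One minor caveat on your alternative justification: the Hurewicz image of $[\cp{n}]$ in $H_*(MU;\mathbb{Q})$ is $(n+1)m_n$, which agrees with $(n+1)b_n$ only modulo decomposables (and up to a sign convention relating the exponential and logarithm series), but this does not affect your main line of argument, which rests on the citable Mishchenko formula.
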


With these choices, the power operation 
\[
  P_{C_{p},MU} \colon MU^{2*}(\cp{\infty}) \to MU^{BC_{p},\,2p*}(\cp{\infty})
\] 
of \prettyref{fig:h-infty2-internal} on the generator $x$ is given 
by the following formula \cite[Prop.~3.17]{Qui71d}:
\begin{equation}
P_{C_{p},MU}(x)=\prod_{i=0}^{p-1}\bigl([i]\xi\,+_{MU}\, x\bigr).
\end{equation}
Of course, after applying an orientation $f\colon MU\rightarrow E$ we obtain
\begin{equation}\label{eq:power-op-formula}
f_* P_{C_{p},MU}(x)=\prod_{i=0}^{p-1}\bigl([i]\xi\,+_{E}\,
x\bigr).
\end{equation}

Considering \prettyref{eq:power-op-formula} as a power series in $x$
whose coefficients are power series in $\xi,$ we define
\[
a_i \equiv a_i(\xi) \in E^{2(p-i-1)}(BC_p)\cong
E^{2(p-i-1)}\llbracket\xi\rrbracket/[p]\xi, ~\text{ for }~ i \ge 0
\] 
by the following expansion:
\begin{equation}\label{eq:def-aseries}
  f_* P_{C_{p},MU}(x)=a_{0}x + a_{1} x^2 + a_{2} x^3
  +\cdots.
\end{equation}
By pulling back along the inclusion \[S^2\cong\cp{1}\rightarrow
\cpinf,\]
and applying the $C_p$ analogue of \prettyref{eq:thom-iso} we see that
$a_0 x$ is the Euler class of the regular representation of $C_p$ and 
\begin{equation}\label{eq:def-chi}
  a_{0}=\chi,
\end{equation} 
is the Euler class of the \emph{reduced} regular representation of
$C_p$.  

The next result follows immediately from \prettyref{prop:additivity}.
\begin{prop} \label{prop:power_op_is_a_ring_map} Let $X$ be a
  topological space and let \[\overline{P_{C_p}}\colon
  MU^{2*}(X)\rightarrow MU^{BC_p,2*}(X)[\chi^{-1}]\] be the map which
  in degree $2n$ is $P_{C_p}/\chi^n$. Then $\overline{P_{C_p}}$ and
  $r_*\overline{P_{C_p}}$ are maps of graded rings.
\end{prop}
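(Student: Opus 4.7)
The plan is to verify the ring-map axioms for $\overline{P_{C_p}}$ directly (grading, unit, multiplicativity, and additivity), and then observe that post-composition with the ring map $r_*$ preserves all of these.

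First I would check that $\overline{P_{C_p}}$ is well-defined as a graded homomorphism of degree $0$. Since each $[i]\xi$ has degree $2$ in $MU^{*}(BC_p)$, we have $\chi = \prod_{i=1}^{p-1}[i]\xi \in MU^{2(p-1)}(BC_p)$. The underlying operation $P_{C_p}$ multiplies the cohomological degree by $p$, sending $MU^{2n}(X)$ into $MU^{BC_p,2pn}(X)$. Dividing by $\chi^n$ therefore lands in degree $2pn - 2n(p-1) = 2n$ of $MU^{BC_p,2*}(X)[\chi^{-1}]$, as required. The unit axiom is immediate: $P_{C_p}(1) = 1$ because the $\hi$ structure map $\mu\colon DMU \to MU$ is a ring map, so $\overline{P_{C_p}}(1) = 1/\chi^0 = 1$.

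Next, for multiplicativity, I would invoke the fact that the external operation $\mathcal{P}_{C_p}$ is multiplicative: this follows from $\mu$ being a ring map together with the natural identification $D_{C_p}(X\wedge Y) \simeq D_{C_p}X \wedge D_{C_p}Y$ when restricted along the diagonal, which descends via the internal construction of Definition \ref{def:internal-power-op} to give multiplicativity of $P_{C_p}$. Thus for $x \in MU^{2n}(X)$ and $y \in MU^{2m}(X)$,
\[
\overline{P_{C_p}}(xy) = \frac{P_{C_p}(x)\,P_{C_p}(y)}{\chi^n\cdot\chi^m} = \overline{P_{C_p}}(x)\,\overline{P_{C_p}}(y).
\]

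The remaining axiom, additivity, is the main obstacle; it does not hold for $P_{C_p}$ itself, which satisfies only an additivity-up-to-transfer-terms formula with corrections coming from the restriction-transfer from the trivial subgroup of $C_p$. The essential point is that these correction terms are annihilated after inverting the Euler class $\chi$, and this is precisely what is recorded in Proposition \ref{prop:additivity}; I will cite it without reproducing the argument.

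Finally, the statement about $r_*\overline{P_{C_p}}$ is a formal consequence: since $r\colon MU_{(p)}\to BP$ is a map of ring spectra, it induces a graded ring homomorphism $r_*$ on cohomology, and the composite of two graded ring maps is again a graded ring map.
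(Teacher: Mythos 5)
Your proposal is correct and follows essentially the same route as the paper, which likewise takes multiplicativity of $P_{C_p}$ for granted from the power-operation structure, obtains additivity only after inverting $\chi$ by citing \prettyref{prop:additivity}, and handles $r_*\overline{P_{C_p}}$ by observing that $r_*$ is a ring map. The only cosmetic difference is that the paper reads additivity directly off the right-hand side of Quillen's formula \prettyref{eq:quillen1}, which is a sum of the additive operations $s_\alpha$ with coefficients $a^\alpha$, rather than phrasing it as transfer-correction terms being annihilated once the Euler class $\chi$ is inverted.
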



 Using this result and the discussion preceding
 \prettyref{thm:quillen-mu} we see that the maps
 $\overline{P_{C_{p},MU}}$ and $r_*\circ \overline{P_{C_p,MU}}$ 
 define formal group laws
 $\mathcal{UP}$ and $\mathcal{VP}$ over $MU^{BC_{p}}[\chi^{-1}]$ and
 $BP^{BC_{p}}[\chi^{-1}]$ respectively. 

\begin{figure}[ht!]
\[
\xymatrix{
MU^{2*}(\mathbb{C}P^{\infty})\ar[rr]^-{\overline{P_{C_{p},MU}}}\ar[d]_{r_*}
& &
MU^{BC_{p},2*}[\chi^{-1}](\mathbb{C}P^{\infty})\ar[d]_{r_*}\\
BP^{2*}(\mathbb{C}P^{\infty})\ar@{-->}[rr]^-{\overline{P_{C_{p},BP}}} & &
BP^{BC_{p},2*}[\chi^{-1}](\mathbb{C}P^{\infty})}
\]
\caption{A formal group theoretic condition.\label{fig:red_power_op_diagram}}
\end{figure}
\nopagebreak[4]

\begin{thm}\label{thm:p-typical-form}
  The map $r\colon MU\rightarrow BP$ is a map of $H_{\infty}$ ring spectra
  if and only if $\mathcal{VP}$ is $p$-typical. 
\end{thm}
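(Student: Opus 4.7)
The plan is to convert McClure's vanishing criterion (\prettyref{thm:McClure-condition}) into a formal group theoretic statement via the universal properties of $MU^*$ and $BP^*$. By \prettyref{thm:h-infty-eq-h-infty2}, $r$ is $H_\infty$ if and only if it is $H_\infty^2$, so I focus on the $H_\infty^2$ condition. The FGL $\mathcal{VP}$ over $R := BP^{BC_p}[\chi^{-1}]$ is classified by the ring map $\phi_V \colon MU^* \to R$ obtained by restricting $r_*\overline{P_{C_p,MU}}$ to coefficients. Since Quillen's theorem identifies $MU^*$ with the Lazard ring and $r_*\colon MU^*\to BP^*$ with the canonical map to the ring classifying $p$-typical FGLs, $\mathcal{VP}$ is $p$-typical if and only if $\phi_V$ factors as $\psi\circ r_*$ for some ring map $\psi\colon BP^*\to R$.

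For the forward direction, suppose $r$ is $H_\infty^2$. By \prettyref{thm:McClure-condition} the dashed map $\overline{P_{C_p,BP}}$ in \prettyref{fig:red_power_op_diagram} exists and the outer rectangle commutes; restricting to coefficients yields the desired $\psi$, so $\mathcal{VP}$ is $p$-typical. Conversely, suppose $\mathcal{VP}$ is $p$-typical and write $\phi_V=\psi\circ r_*$. I extend $\psi$ to a continuous ring map $\overline{P_{C_p,BP}}\colon BP^*\llbracket x\rrbracket \to R\llbracket x\rrbracket$ by setting $\overline{P_{C_p,BP}}(x):=r_*\overline{P_{C_p,MU}}(x)$. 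Because $r_*$ identifies the chosen generator $x\in MU^*(\cpinf)$ with $x\in BP^*(\cpinf)$, this extension makes the outer rectangle of \prettyref{fig:red_power_op_diagram} commute on both coefficients and on $x$, and \prettyref{thm:McClure-condition} then gives that $r$ is $H_\infty^2$.

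The main point requiring care is the equivalence between $p$-typicality of a formal group law over a $\mathbb{Z}_{(p)}$-algebra and the factorization of its classifying map through $r_*$. The definition of $p$-typicality given above is in terms of formal sums over $q$-th roots of unity, but the factorization characterization invokes Quillen's theorem together with the universal property of $BP^*$ as the classifier of $p$-typical FGLs; I would record this as a separate lemma. The remaining content of the proof is then a dictionary between the ring-level commutativity of \prettyref{fig:red_power_op_diagram} and the FGL-level factorization statement, with the crucial observation that the continuous ring map $r_*\overline{P_{C_p,MU}}$ is determined entirely by its action on coefficients and on the generator $x$.
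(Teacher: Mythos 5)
Your proposal is correct and takes essentially the same route as the paper: both arguments rest on the $p$-universality of Quillen's orientation (so that $p$-typicality of $\mathcal{VP}$ is equivalent to the existence of the dashed map/factorization in \prettyref{fig:red_power_op_diagram}) together with McClure's criterion \prettyref{thm:McClure-condition} and the equivalence of $H_\infty$ with $H_\infty^2$ orientations. The only step you gloss in the converse is the passage from commutativity in the $\chi$-inverted ring back to the vanishing of the classes $MC_n$ in $BP^*\llbracket\xi\rrbracket/\langle p\rangle\xi$, which the paper covers by noting that $q_*\chi$ is a non-zero-divisor (so the localization map is injective) together with its torsion-freeness/rational-generators remark.
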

\begin{proof}
  Since the map $r$ is a $p$-universal orientation of $BP$, there
  exists a map \[P\colon BP\rightarrow BP^{BC_p}[\chi^{-1}].\] that makes
  \prettyref{fig:red_power_op_diagram} commute if and only if
  $\mathcal{VP}$ is $p$-typical.  This happens if and only if the
  indecomposables in $MU^{-2n}$ map to zero under
  $\overline{P_{C_p,MU}}$ when $n\neq p^i-1$. Since the cobordism
  classes $[\mathbb{C}P^n]$ are rationally polynomial generators and
  all rings in sight are torsion-free, we see that $\mathcal{VP}$ is
  $p$-typical if and only if the elements $MC_n$ of
  \prettyref{thm:McClure-condition} map to 0.
\end{proof}

\section{Computing the Obstructions}\label{sec:Computing-Obstructions}

Before proving \prettyref{prop:MCn-rearranged} we will need some notation.
\subsection{Notation}\label{sec:notation}
Throughout this paper, the symbol 
\begin{equation}
  \alpha=(\alpha_0,\alpha_1,\dots)
\end{equation}
with $\alpha_n=0$ for $n\gg 0,$ will be a multi-index beginning with
$\alpha_0$.  

As the reader will see, it will also be convenient to have notation
for multi-indices starting with $\alpha_1$, so we set
\begin{equation}
  \overline{\alpha} = (\alpha_1, \alpha_2, \ldots).
\end{equation}
Given an infinite list of variables $a_0,a_1,a_2,\dots,$ we set 
\begin{equation}
  a^\alpha = a_0^{\alpha_0}a_1^{\alpha_1}\dots\quad\mathrm{and}\quad
  a^{\overline{\alpha}}=a_1^{\alpha_1}a_2^{\alpha_2}\dots.
\end{equation}
  
For any integer $n$ we define the modified multinomial coefficient
$\mu(n;\overline{\alpha})$ by the formal power series expansion:
\begin{equation}
  (1+b_1+b_2\cdots)^n=\sum_{\overline{\alpha}} 
  \mu(n;\overline{\alpha})b^{\overline{\alpha}}.\label{eq:def-mu}
\end{equation}
We also set
\begin{align}
  |\alpha|&=\sum_{i\geq 0} \alpha_i,\\
  |\alpha|^\prime&=\sum_{i\geq 0} i \alpha_i = |\overline{\alpha}|'.
\end{align}

Given a formal power series $S(z)$, let 
\begin{equation}
  S(z)[z^k]=\textrm{coefficient of } z^k \textrm{ in }S(z).
\end{equation}

\subsection{Additive and multiplicative operations}\label{sec:additive-and-multiplicative-ops}

Recall that the Landweber-Novikov algebra is the subalgebra
of $MU^*MU$ whose elements define additive cohomology operations.
This algebra is a free $\mathbb{Z}_{(p)}$-module on elements
\begin{equation}
  s_{\alpha_1,\alpha_2,\dots}=s_{\overline{\alpha}}
\end{equation}
dual to the standard basis
\begin{equation} 
  t_1^{\alpha_1}t_2^{\alpha_2}\dots=t^{\overline{\alpha}}\in 
   MU_{2|\overline{\alpha}|^\prime}MU\cong 
   MU_{2|\overline{\alpha}|^\prime}BU.
\end{equation}

To simplify our formulas we extend the indexing to multi-indices
starting with $\alpha_0$ by setting
\begin{equation}
  s_\alpha\equiv s_{\overline{\alpha}}\in MU^{2|\alpha|^\prime}MU.
\end{equation}

\begin{prop}[{\cite{Qui71d}}]\label{prop:additivity}
  If $x\in MU^{-2q}(X)$ and $m\gg 0$ then 
  \begin{equation}
    \chi^{m+q}P_{C_p}x = \sum_{|\alpha|= m}
    a^{\alpha} s_\alpha(x).\label{eq:quillen1}
  \end{equation}
\end{prop}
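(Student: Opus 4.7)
My plan is to verify the formula by reducing to the universal case on $\mathbb{C}P^\infty$ and exploiting Quillen's explicit product formula for $P_{C_p}$ together with the action of the Landweber--Novikov algebra on $MU^*(\mathbb{C}P^\infty)$.

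In the universal case, take $x \in MU^{-2}(\mathbb{C}P^\infty)$ the tautological class, so $q = 1$. The formula $P_{C_p}(x) = \prod_{i=0}^{p-1}([i]\xi +_F x)$ from \prettyref{eq:power-op-formula} defines the $a_j$ by the expansion in \prettyref{eq:def-aseries}, namely $P_{C_p}(x) = \sum_{j \geq 0} a_j x^{j+1}$. On the right-hand side, the Landweber--Novikov operations $s_{\overline{\alpha}}$ act on powers of $x$ via a generating function derived from the Hopf algebroid coaction---concretely, $\sum_{\overline{\alpha}} s_{\overline{\alpha}}(x^n)\, t^{\overline{\alpha}}$ can be read off from the formal-group structure on $MU^*(\mathbb{C}P^\infty)$. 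The proposition in this case reduces to matching the coefficient of each $x^k$ in $\chi^{m+1}\prod_{i=0}^{p-1}([i]\xi +_F x)$ against the corresponding coefficient produced by $\sum_{|\alpha|=m} a^\alpha s_\alpha(x)$, which becomes an algebraic identity in the $a_j$ and the $s_{\overline{\alpha}}(x^n)$.

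To extend to a general class $x \in MU^{-2q}(X)$, I would use that the external power operation $\mathcal{P}_{C_p, MU}$ is multiplicative (from the $H_\infty$ structure on $MU$) and that the Landweber--Novikov operations obey the Cartan formula; together these allow extending the formula from a single line-bundle class to products of such classes on $(\mathbb{C}P^\infty)^n$. The splitting principle and naturality then handle Chern classes of arbitrary complex vector bundles, and since every $MU$-cohomology class factors through such Chern classes (via the Thom/cell decomposition of $MU$) and both sides of the equation are natural in $x$, the formula extends to all of $MU^{-2q}(X)$.

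I expect the main technical hurdle to be the universal computation itself: translating Quillen's product formula into the Landweber--Novikov basis requires careful manipulation of the formal group $p$-series and identification of the modified multinomial coefficients $\mu(n;\overline{\alpha})$ defined in \prettyref{eq:def-mu}. One must also verify that $\chi^{m+q}P_{C_p}$ is compatible with the suspension isomorphism for $m$ sufficiently large---this follows from the Thom isomorphism $D_{C_p}S^{2} \cong BC_p^{V_p}$ (a special case of \prettyref{eq:thom-iso}) and the identification $a_0 = \chi$ as the Euler class of the reduced regular representation of $C_p$, which together account for the powers of $\chi$ that appear when commuting $P_{C_p}$ past a suspension.
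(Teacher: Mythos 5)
The paper offers no proof of this proposition --- it is imported verbatim from Quillen --- so the question is whether your plan would reconstruct one, and it would not as it stands. The heart of the statement is an additivity claim: the right-hand side $\sum_{|\alpha|=m}a^{\alpha}s_\alpha(x)$ is additive in $x$ because the Landweber--Novikov operations are, whereas $P_{C_p}$ itself is badly non-additive. The real content is therefore that the deviation of $P_{C_p}$ from additivity consists of terms transferred up from the trivial subgroup of $C_p$, and that multiplication by $\chi$ kills such transfer terms (Frobenius reciprocity, since $\chi$ restricts to zero on a point); this is why the factor $\chi^{m+q}$ with $m\gg 0$ appears and why the paper calls the result ``additivity'' and uses it to deduce \prettyref{prop:power_op_is_a_ring_map}. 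Your outline never engages with this step, and without it no amount of naturality or coefficient-matching on a universal example can yield the formula, because only additive (or otherwise rigidly structured) operations are determined by their values on Chern classes.

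The reduction you propose is also too weak to reach general $x\in MU^{-2q}(X)$. The splitting principle and Cartan formula only access classes built from Chern classes of bundles on $X$, but classes of negative cohomological degree --- for instance the cobordism classes $[\mathbb{C}P^n]\in MU^{-2n}$ of a point, which are exactly the inputs this paper feeds into \prettyref{eq:quillen1} --- are not of that form; and since $P_{C_p}$ is an unstable, non-additive operation defined via the space-level diagonal, you cannot instead factor through a spectrum-level universal class. Quillen's own argument is geometric: representing $x$ by a proper complex-oriented map, he analyzes the $C_p$-fixed locus of the $p$-fold external power, and the normal-bundle data there produce precisely the characteristic numbers $s_\alpha(x)$ with coefficients $a^{\alpha}$ and the powers of $\chi$. (A homotopy-theoretic alternative is to establish additivity after multiplying by $\chi^{m+q}$ as above and then evaluate on the orientation class of $\mathbb{C}P^\infty$; that evaluation is the one part of your plan that works, though there it is essentially the definition of the $a_i$, and note the tautological class lies in $MU^{2}(\mathbb{C}P^\infty)$, so $q=-1$ in that case, not $q=1$.)
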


Since the right-hand side of \prettyref{eq:quillen1} is additive in
$x$ and $P_{C_p}$ is always multiplicative, we obtain
\prettyref{prop:power_op_is_a_ring_map} by inverting $\chi$.

For any complex oriented cohomology theory $E,$
\[[i]\xi +_E x \equiv i\xi \mod{x}, \] which implies 
\begin{equation}\label{eq:chi-non-zero-divisor}
  \chi = a_0 \equiv (p-1)!\xi^{p-1} \mod{\xi^p}.
\end{equation}

It follows that inverting $\chi$ factors through inverting $\xi,$ so
when $E$ is $MU$ or $BP,$ we have 
\[ 
E^{BC_p,*}(X)[\chi^{-1}]\cong
E^*(X)\llbracket\xi\rrbracket[\chi^{-1}]/[p]\xi\cong
E^*(X)\llbracket\xi\rrbracket[\chi^{-1}]/\langle p\rangle\xi.
\]

Since 
\[
\langle p \rangle\xi=[p]\xi/\xi \equiv p \mod{\xi}
\] 
and $(p-1)!$ is not divisible by $p$, $q_* \chi$ is not a zero-divisor.  It
follows, when $E=MU$ or $BP,$ that the localization map
\[
E^*(X)\llbracket\xi\rrbracket/\langle p\rangle \xi \rightarrow
E^*(X)\llbracket\xi\rrbracket[\chi^{-1}]/\langle p\rangle\xi
\]
is an injection.  
Applying \prettyref{prop:power_op_is_a_ring_map} proves
the following:
\begin{prop} 
  The composites \begin{align*}
	q_*P_{C_p}\colon &MU^*(\cpinf)\rightarrow MU^{BC_p,*}(\cpinf)/\langle p
	\rangle \xi\\
	r_*q_*P_{C_p}\colon &MU^*(\cpinf)\rightarrow BP^{BC_p,*}(\cpinf)/\langle p
	\rangle \xi
  \end{align*} are ring maps.
\end{prop}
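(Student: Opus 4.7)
The plan is to reduce the claim to \prettyref{prop:power_op_is_a_ring_map}, together with the injectivity of the localization
\[
E^{BC_p,*}(\cpinf)/\langle p\rangle\xi \ \hookrightarrow\ E^{BC_p,*}(\cpinf)[\chi^{-1}]/\langle p\rangle\xi
\]
for $E\in\{MU, BP\}$, which was just established in the preceding paragraph using that $q_*\chi$ is not a zero-divisor. Since this localization is an injective ring map, an identity among elements in the unlocalized quotient holds there if and only if it holds after inverting $\chi$. So the strategy is to post-compose $q_*P_{C_p}$ (resp.\ $r_*q_*P_{C_p}$) with this injection and check the ring map property in the $\chi$-inverted quotient.

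In the $\chi$-inverted target, the defining relation $\overline{P_{C_p}}(x)=P_{C_p}(x)/\chi^n$ for $x\in MU^{2n}(\cpinf)$ expresses $q_*P_{C_p}(x)$ as $q_*(\chi)^n\cdot q_*\overline{P_{C_p}}(x)$. The composite $q_*\overline{P_{C_p}}$ is itself a ring map, being the composite of the ring map $\overline{P_{C_p}}$ of \prettyref{prop:power_op_is_a_ring_map} with the ring quotient by $\langle p\rangle\xi$. From this factorization, additivity of $q_*P_{C_p}$ on same-degree elements and multiplicativity on arbitrary elements follow by direct algebraic manipulation: the scalar factor $q_*(\chi)^{n+m}$ distributes appropriately across products and across sums of same-degree elements, and unitality is immediate from $P_{C_p}(1)=1$.

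The argument for $r_*q_*P_{C_p}$ is identical, with $r_*\overline{P_{C_p}}$ in place of $\overline{P_{C_p}}$. I do not anticipate any serious obstacle; both the ring-map property of the reduced power operation and the injectivity of the relevant localization have already been handled in the material directly preceding the proposition, and the current statement is a formal consequence of those two facts. If anything, the only point worth flagging is the degree-mixing nature of $P_{C_p}$: because the map multiplies cohomological degree by $p$, ``ring map'' here should be read as a homomorphism of underlying (ungraded) rings rather than a map of graded rings, and the substitution argument above is phrased accordingly.
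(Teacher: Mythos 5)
Your argument is exactly the paper's: the proposition is deduced by combining \prettyref{prop:power_op_is_a_ring_map} with the injectivity of the localization map into the $\chi$-inverted quotient, which the paper establishes in the immediately preceding paragraph and then cites with the phrase ``Applying \prettyref{prop:power_op_is_a_ring_map} proves the following.'' You simply spell out the bookkeeping (the factor $q_*(\chi)^n$ and the ungraded reading of ``ring map'') that the paper leaves implicit, so the proposal is correct and takes essentially the same route.
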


\subsection{\texorpdfstring{Derivation of $MC_n$}{Derivation of MC\_n}}\label{sec:Derivation-MCn}
We begin with the following refinement of \prettyref{eq:quillen1}:
\begin{lem}\label{lem:quillen-m=n}
  \begin{equation}\label{eq:quillen1-cpn}
  \chi^{2n}P_{C_p}[\cp{n}] = \sum_{|\alpha|= n}
  a^{\alpha} s_\alpha[\cp{n}].
  \end{equation}
\end{lem}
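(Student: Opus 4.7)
The plan is to derive this lemma from Quillen's formula (\prettyref{prop:additivity}) by a short change-of-variables argument, exploiting the fact that Landweber--Novikov operations acting on $[\cp{n}]$ are subject to a tight degree constraint.

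First I would apply \prettyref{prop:additivity} to $x = [\cp{n}]$ with $q = n$ and some $m \gg n$, obtaining
\[
\chi^{m+n} P_{C_p}[\cp{n}] = \sum_{|\alpha| = m} a^\alpha s_\alpha[\cp{n}].
\]
The key observation is a dimension count: $s_\alpha[\cp{n}] \in MU^{-2n + 2|\alpha|'}$, and this group vanishes as soon as $|\alpha|' > n$, since $MU^*$ is concentrated in non-positive degrees. Combined with the inequality $|\alpha|' \ge |\alpha| - \alpha_0 = m - \alpha_0$, this forces every nonvanishing term in the sum to satisfy $\alpha_0 \ge m - n$.

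I would then reindex by setting $\alpha_0^* = \alpha_0 - (m-n)$ and $\alpha_i^* = \alpha_i$ for $i \ge 1$; this gives a bijection between multi-indices $\alpha$ with $|\alpha| = m$ and $\alpha_0 \ge m - n$, and arbitrary multi-indices $\alpha^*$ with $|\alpha^*| = n$. Under this reindexing, $s_\alpha = s_{\alpha^*}$ (since both depend only on $\overline{\alpha} = \overline{\alpha^*}$) and $a^\alpha = \chi^{m-n}\, a^{\alpha^*}$, so the sum collapses to
\[
\chi^{m+n} P_{C_p}[\cp{n}] = \chi^{m-n} \sum_{|\alpha^*| = n} a^{\alpha^*} s_{\alpha^*}[\cp{n}].
\]
Cancelling the common factor of $\chi^{m-n}$ yields the lemma.

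The only delicate point, and the main obstacle, is this final cancellation, since $\chi$ can be a zero-divisor in $MU^{BC_p, *}$. It is resolved by the setting in which the lemma is applied: either after inverting $\chi$ (as in the definition of $\overline{P_{C_p}}$, where cancellation is immediate), or after projecting by $q_*$ into $MU^*\llbracket \xi \rrbracket / \langle p \rangle \xi$, where $q_* \chi$ is a non-zero-divisor, as observed just before \prettyref{prop:power_op_is_a_ring_map}. In either setting the factor $\chi^{m-n}$ may be cancelled and the stated equality follows.
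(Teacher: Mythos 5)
Your argument is correct and is essentially the paper's own proof: apply \prettyref{eq:quillen1} with a large exponent, discard the terms with $|\overline{\alpha}|' > n$ using that $MU^*$ is concentrated in non-positive degrees, and factor out $a_0^{m-n}=\chi^{m-n}$ (your reindexing of $\alpha_0$ is exactly the paper's splitting of the sum over $\alpha_0$ and pulling out $a_0^{k}$), then cancel. Your closing caveat is in fact more careful than the paper, which simply asserts that $a_0=\chi$ is not a zero-divisor; since $\chi\cdot\langle p\rangle\xi=0$ in $MU^*\llbracket\xi\rrbracket/[p]\xi$, the cancellation is cleanly justified only after inverting $\chi$ or applying $q_*$, which is precisely how the lemma is used in \prettyref{thm:McClure-formula}.
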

\begin{proof}
  By \prettyref{eq:quillen1}, for $k\gg0$ we have
\begin{align*}
  \chi^{2n+k}P_{C_p}[\cp{n}] &= \sum_{|\alpha|= n+k} a^{\alpha}
  s_\alpha[\cp{n}] \\
		&=\sum_{\alpha_0=0}^{n+k}\sum_{|\overline{\alpha}|=
		n+k-\alpha_0} a_0^{\alpha_0} a^{\overline{\alpha}}
		s_{\overline{\alpha}}[\cp{n}] \\
		&=\sum_{\alpha_0=0}^{k-1}a_0^{\alpha_0}\sum_{|\overline{\alpha}|=
		n+k-\alpha_0} a^{\overline{\alpha}}
		s_{\overline{\alpha}}[\cp{n}] +
		\sum_{\alpha_0=k}^{n+k}a_0^{\alpha_0}\sum_{|\overline{\alpha}|=
		n+k-\alpha_0} \hspace{-11pt}a^{\overline{\alpha}} s_{\overline{\alpha}}[\cp{n}] 
\end{align*}

Since $MU^*$ is concentrated in non-positive degrees,
\[
s_{\overline{\alpha}}([\mathbb{C}P^n]) \in
MU^{2|\overline{\alpha}|^\prime-2n} = 0 \] when
$|\overline{\alpha}|'>n$.

In the first sum of the last equation, $|\overline{\alpha}|>n$.
Since \[|\overline{\alpha}|'=\sum_{i\geq 1}i \alpha_i \geq
\sum_{i\geq 1}\alpha_i=|\overline{\alpha}|,\]  all terms in the
first sum are trivial.  This leaves us with
\begin{align*}
   \chi^{2n+k}P_{C_p}[\cp{n}] &=
   \sum_{\alpha_0=k}^{n+k}a_0^{\alpha_0}\sum_{|\overline{\alpha}|=
   n+k-\alpha_0} a^{\overline{\alpha}} s_{\overline{\alpha}}[\cp{n}] \\
	  &=a_0^k\sum_{\alpha_0=0}^{n}a_0^{\alpha_0}\sum_{|\overline{\alpha}|=
	  n-\alpha_0} a^{\overline{\alpha}} s_{\overline{\alpha}}[\cp{n}] \\
	  &=a_0^k\sum_{|\alpha|= n} a^{\alpha} s_{\alpha}[\cp{n}]. 
\end{align*}
Since $a_0 = \chi$ is a not a zero-divisor the lemma follows.
\end{proof}

\begin{thm}[{\cite[I.8.1]{Ada95}}]\label{thm:adams-formula}
  \begin{equation}\label{eq:adams-formula}
	s_\alpha[\cpn]=\mu(-(n+1);\overline{\alpha})[\cp{n-|\alpha|^\prime}]
  \end{equation}
\end{thm}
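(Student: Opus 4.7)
Proof plan. This is the classical formula of Adams (as cited); I would recover it by combining the action of the Landweber-Novikov operations on first Chern classes with the standard description of the stable normal bundle of $\cp{n}$.

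The key ingredient is the identity: for a complex line bundle $L$ over $X$ with $y = c_1(L) \in MU^2(X)$ and any non-negative integer $m$,
\[
s_{\overline{\alpha}}(y^m) = \mu(m;\overline{\alpha})\, y^{m + |\overline{\alpha}|'}.
\]
This follows from the duality between $s_{\overline{\alpha}}$ and the generators $t^{\overline{\alpha}}$ of $MU_{*}MU$: the right unit $\eta_R$ acts on $y$ by a universal formal power series in the $t_i$, and extracting the coefficient of $t^{\overline{\alpha}}$ in the multinomial expansion of its $m$th power produces exactly $\mu(m;\overline{\alpha})$ by the defining identity \prettyref{eq:def-mu}. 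Using the Cartan multiplicativity of $s_{\overline{\alpha}}$ and the splitting principle, this identity extends formally to virtual bundles whose total Chern class has the form $(1+y)^m$ for any integer $m$.

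For $\cp{n}$, the stable tangent bundle satisfies $c(T\cp{n}) = (1+x)^{n+1}$ with $x = c_1(H)$, so the stable normal bundle $\nu$ has $c(\nu) = (1+x)^{-(n+1)}$. Applying the extended identity with $m = -(n+1)$ gives
\[
s_{\overline{\alpha}}(\nu) = \mu(-(n+1);\overline{\alpha})\, x^{|\overline{\alpha}|'}
\]
in $MU^{2|\overline{\alpha}|'}(\cp{n})$. The action of $s_{\overline{\alpha}}$ on the cobordism class $[\cp{n}]$ then amounts to the Gysin pushforward of this expression along $\pi \colon \cp{n} \to \mathrm{pt}$, and since $\pi_{*}(x^k) = [\cp{n-k}]$ (by the standard Poincaré-duality computation, viewing $x^k$ as the class dual to a generic complete intersection of $k$ hyperplanes), the formula \prettyref{eq:adams-formula} follows.

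The main obstacle is the extension from positive exponents to the negative case $m = -(n+1)$. Concretely, one must verify that the formal identity $s_{\overline{\alpha}}$ applied to a virtual bundle with total Chern class $(1+y)^m$ equals $\mu(m;\overline{\alpha})\, y^{|\overline{\alpha}|'}$ for all integer $m$. This is a formal consequence of the Cartan-type multiplicativity of the Landweber-Novikov operations on virtual bundles, together with the identity $(1 + b_1 + \cdots)^m (1 + b_1 + \cdots)^{m'} = (1 + b_1 + \cdots)^{m+m'}$, which forces the coefficients $\mu(m;\overline{\alpha})$ to obey the same combinatorial recursion for all integer values of $m$.
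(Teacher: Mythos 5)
The paper offers no proof of this statement---it is quoted directly from Adams [I.8.1]---and your argument is essentially the classical one found there: the total Landweber--Novikov class of the stable normal bundle $-(n+1)H$ of $\cp{n}$ is $\bigl(\sum_{i\ge 0} t_i x^i\bigr)^{-(n+1)}$, whose $t^{\overline{\alpha}}$-coefficient is $\mu(-(n+1);\overline{\alpha})\,x^{|\overline{\alpha}|'}$ by \prettyref{eq:def-mu}, and $s_{\overline{\alpha}}[\cp{n}]=\pi_*\bigl(s_{\overline{\alpha}}(\nu)\bigr)$ with $\pi_*(x^k)=[\cp{n-k}]$. The two steps you assert rather than prove---that operations on $\pi_*MU$ are computed as Gysin pushforwards of normal characteristic classes (via Pontryagin--Thom and the action on Thom classes), and the multiplicative extension of the coefficient identity to virtual bundles (your ``$c(\nu)=(1+x)^{-(n+1)}$'' should really be the total Landweber--Novikov class above, not a literal Chern class)---are exactly the standard facts underlying Adams's argument, so the proposal is correct and matches the cited proof.
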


We combine Equations \ref{eq:quillen1-cpn} and \ref{eq:adams-formula}
and obtain:

\begin{thm}\ \label{thm:McClure-formula}
  \begin{gather*}
	MC_{n}(\xi)\equiv r_* q_* \chi^{2n}
	P_{C_p}[\cpn]=\sum_{|\alpha|=n}\mu(-(n+1);\overline{\alpha})\,\,r_{*}[\cp{n-|\alpha|^\prime}]\,\,
	a^{\alpha}. 
  \end{gather*}
\end{thm}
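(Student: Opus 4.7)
The plan is to derive the formula by direct substitution, combining the two preparatory results \prettyref{lem:quillen-m=n} and \prettyref{thm:adams-formula}, and then applying the ring maps $r_*$ and $q_*$ term-by-term. Concretely, the proof will have three short steps.

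First, I would invoke \prettyref{lem:quillen-m=n} to replace Quillen's general formula \eqref{eq:quillen1} with its sharpened form at $x=[\cp{n}]$:
\[
\chi^{2n} P_{C_p}[\cp{n}] \;=\; \sum_{|\alpha|=n} a^{\alpha}\, s_{\alpha}[\cp{n}].
\]
This step already absorbs the main subtlety, namely that the degree-shift ``$m\gg 0$'' in Quillen's original formula can be sharpened here because $s_{\overline{\alpha}}[\cp{n}]$ vanishes whenever $|\overline{\alpha}|'>n$ (together with the inequality $|\overline{\alpha}|'\geq |\overline{\alpha}|$ that was used in the proof of the lemma). So at this point the content of the theorem reduces to an identity in the Landweber--Novikov action on cobordism classes.

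Second, I would substitute Adams' formula (\prettyref{thm:adams-formula}) termwise into the sum:
\[
s_{\alpha}[\cp{n}] \;=\; \mu\bigl(-(n+1);\overline{\alpha}\bigr)\, [\cp{\,n-|\alpha|'\,}],
\]
yielding
\[
\chi^{2n} P_{C_p}[\cp{n}] \;=\; \sum_{|\alpha|=n} \mu\bigl(-(n+1);\overline{\alpha}\bigr)\,[\cp{\,n-|\alpha|'\,}]\, a^{\alpha}
\]
as an identity in $MU^{*}(BC_p)$. Note that the modified multinomial coefficients are integers, so they commute with all the ring maps that appear below.

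Third, I would apply $r_{*}q_{*}$ to both sides. The left-hand side is $MC_n(\xi)$ by the definition given in \prettyref{thm:McClure-condition}. On the right-hand side, $q_*$ passes the classes $a_i$ to their images in $BP^*\llbracket\xi\rrbracket/\langle p\rangle \xi$ (which we still denote $a^\alpha$ by our conventions), while $r_*$ is an additive $MU^*$-linear map that acts on the only $MU^*$-coefficients in sight, namely the cobordism classes $[\cp{\,n-|\alpha|'\,}]$, sending them to $r_*[\cp{\,n-|\alpha|'\,}]\in BP^*$. Pulling the integer coefficients through yields exactly the formula in the statement. There is no real obstacle here; the substantive work has already been done in the two cited results, and this theorem is essentially a bookkeeping synthesis that puts $MC_n(\xi)$ into a form to which the explicit computations of \prettyref{sec:Computer-Generated-Calculations} can be applied.
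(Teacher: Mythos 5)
Your proposal is correct and follows exactly the paper's route: the paper obtains \prettyref{thm:McClure-formula} by combining \prettyref{lem:quillen-m=n} with Adams' formula (\prettyref{thm:adams-formula}) and then applying $r_*q_*$, which is precisely your three steps. The extra bookkeeping you spell out (integrality of $\mu(-(n+1);\overline{\alpha})$ and linearity of $r_*$, $q_*$) is implicit in the paper and raises no issues.
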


\begin{remark}
 After correcting a couple of typographical errors, this is a simplified
 version of the formula given in \cite[VIII.7.8]{BMMS86}.
\end{remark}

\noindent
For $n \neq p^i - 1$, the power series $MC_n(\xi)$ are McClure's
obstructions to the existence of $\hi$ structure on Quillen's map
$r\colon
MU \to BP$.  Note that, if $i + 1$ is not a power of $p$ then
$r_*[\cp{i}] = 0$, so many of the summands on $MC_n$ are zero.  For
our calculations, we make use of the following alternate expression:
\begin{prop}\label{prop:MCn-rearranged}
  McClure's formula is equivalent to
  \[
  MC_n(\xi) = \chi^{2n+1} \sum_{k = 0}^n r_*[\mathbb{C}P^{n-k}] \cdot \left({\textstyle \sum_{i \ge 0}} a_i z^i \right)^{-(n+1)}[z^k].
  \]
\end{prop}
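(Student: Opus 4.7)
The plan is a direct algebraic manipulation of the formula from \prettyref{thm:McClure-formula}, reorganizing the sum by the ``weighted length'' $|\overline{\alpha}|'$ and recognizing the inner sum as a generating function coefficient.

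\textbf{Step 1: Separate the $\alpha_0$ coordinate.} I would begin by splitting each multi-index $\alpha = (\alpha_0, \overline{\alpha})$ in the sum defining $MC_n(\xi)$. The condition $|\alpha| = n$ becomes $\alpha_0 = n - |\overline{\alpha}|$, which forces $|\overline{\alpha}| \leq n$. Since $|\alpha|' = |\overline{\alpha}|'$ and $a^\alpha = a_0^{n-|\overline{\alpha}|} a^{\overline{\alpha}}$, McClure's formula rewrites as
\[
MC_n(\xi) = \sum_{|\overline{\alpha}| \le n} \mu(-(n+1);\overline{\alpha})\, r_*[\cp{n-|\overline{\alpha}|'}]\, a_0^{n-|\overline{\alpha}|}\, a^{\overline{\alpha}}.
\]

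\textbf{Step 2: Reindex by $k = |\overline{\alpha}|'$.} I would next interchange the order of summation, grouping terms according to $k := |\overline{\alpha}|'$. The identity $|\overline{\alpha}| \le |\overline{\alpha}|'$ (since $i\alpha_i \ge \alpha_i$ for $i \ge 1$) together with the vanishing of $r_*[\cp{n-k}]$ for $k > n$ shows that the constraint $|\overline{\alpha}| \le n$ is automatic once we restrict to $0 \le k \le n$. This yields
\[
MC_n(\xi) = \sum_{k=0}^{n} r_*[\cp{n-k}] \sum_{|\overline{\alpha}|' = k} \mu(-(n+1);\overline{\alpha})\, a_0^{n-|\overline{\alpha}|}\, a^{\overline{\alpha}}.
\]

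\textbf{Step 3: Recognize the inner sum as a coefficient.} The heart of the argument is to identify the inner sum with $\chi^{2n+1}$ times a coefficient of a power series. Using the defining identity
\[
\Bigl(1 + \sum_{i \ge 1} b_i\Bigr)^{-(n+1)} = \sum_{\overline{\alpha}} \mu(-(n+1);\overline{\alpha})\, b^{\overline{\alpha}},
\]
I would substitute $b_i = (a_i/a_0) z^i$, giving
\[
\Bigl(1 + \sum_{i \ge 1} (a_i/a_0) z^i\Bigr)^{-(n+1)} = \sum_{\overline{\alpha}} \mu(-(n+1);\overline{\alpha})\, a_0^{-|\overline{\alpha}|}\, a^{\overline{\alpha}}\, z^{|\overline{\alpha}|'}.
\]
Pulling out a factor of $a_0^{-(n+1)}$ gives the same expansion for $\bigl(\sum_{i\ge 0} a_i z^i\bigr)^{-(n+1)}$, and extracting the coefficient of $z^k$ then multiplying by $\chi^{2n+1} = a_0^{2n+1}$ recovers precisely the inner sum from Step 2. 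Substituting back completes the proof.

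\textbf{Anticipated difficulties.} The computation is essentially bookkeeping, but there are two points of care. First, one must justify working with $a_0^{-(n+1)}$ in an intermediate step even though the final formula lives in a ring where $\chi$ need not be inverted; this is fine because the $z^k$ coefficient for $k \le n$ has lowest $a_0$-power $-(n+1)-k \ge -(2n+1)$, so the factor $\chi^{2n+1}$ clears all negative powers. Second, one must be careful that the passage between $|\alpha|=n$ (unconstrained $|\overline{\alpha}|'$) and $k = |\overline{\alpha}|' \le n$ preserves the sum — the vanishing $r_*[\cp{m}] = 0$ for $m < 0$ handles the terms with $|\overline{\alpha}|' > n$ that are formally present in McClure's expression but contribute zero.
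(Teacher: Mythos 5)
Your proposal is correct and follows essentially the same route as the paper: split off $\alpha_0$, regroup the sum by $k=|\overline{\alpha}|'$ (using $|\overline{\alpha}|\le|\overline{\alpha}|'$ to see the constraint $|\alpha|=n$ is automatic for $k\le n$), and identify the inner sum as the $z^k$-coefficient of $a_0^{2n+1}\bigl(\sum_{i\ge 0}a_iz^i\bigr)^{-(n+1)}$ via the generating-function definition of $\mu$. Your bookkeeping on the powers of $a_0$ and on the vanishing of $r_*[\cp{m}]$ for $m<0$ matches the paper's treatment.
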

\begin{proof}
  We rearrange the sum by summing over $|\alpha|^\prime=k$. Now the
  condition $|\alpha| = n$ is simply a constraint on $\alpha_0$.
  \begin{align*}
  MC_{n}(\xi) & = \sum_{k =
    0}^n\sum_{\substack{|\alpha|^\prime=k \\ |\alpha| = n}}\mu(-(n+1);\overline{\alpha})\,\,r_{*}[\mathbb{C}P^{n-|\alpha|^\prime}]\,\,
  a^{\alpha} \\
  & = \sum_{k = 0}^n r_{*}[\mathbb{C}P^{n-k}]\, \sum_{\substack{|\alpha|^\prime=k \\ |\alpha| = n}}\mu(-(n+1);\overline{\alpha})\,\,
  a^{\alpha}.
  \end{align*}
  To simplify the inner sum, we consider the following formal series
  and use the definition of the modified multinomial coefficients
  given in \prettyref{eq:def-mu}:
  \begin{align*}
  a_0^{2n+1} \left( {\textstyle \sum_{i \ge 0}} a_i z^i \right)^{-(n+1)} & = 
  a_0^{n} \left( 1 + \frac{a_1}{a_0}z + \frac{a_2}{a_0}z^2 +\cdots \right) ^{-(n+1)}\\
  & = a_0^n \sum_{\overline{\alpha}} \mu(-(n+1);\overline{\alpha}) \: 
  \left( \frac{a_1}{a_0}z \right)^{\alpha_1} \: \left(\frac{a_2}{a_0}z^2 \right)^{\alpha_2}\cdots\\
  & = \sum_{\overline{\alpha}} \mu(-(n+1);\overline{\alpha}) \: 
  \frac{a_0^n \: a_1^{\alpha_1}a_2^{\alpha_2}\cdots}{a_0^{\alpha_1 + \alpha_2 + \cdots}} 
    \: z^{\alpha_1 + 2 \alpha_2 + \cdots}\\
  & = \sum_{k \ge 0} z^k \left( \sum_{|\overline{\alpha}|'=k} \mu(-(n+1);\overline{\alpha}) \: 
    \frac{a_0^n \: a_1^{\alpha_1} \: a_2^{\alpha_2} \cdots}{a_0^{\alpha_1 + \alpha_2 + \cdots}} \right)\\
  \end{align*}

  Now we consider the coefficients of $z^k$.  For $k \leq n$, the
  restriction 
  $|\overline{\alpha}|' = k$
  implies
  $|\overline{\alpha}| \leq n$.
    Hence we may extend to a
  sum over multi-indices $\alpha = (\alpha_0, \alpha_1, \alpha_2,
  \ldots)$ with $\alpha_0 = n - |\overline{\alpha}|$ which forces
  $|\alpha| = n$.  Thus we have, for $0 \leq k \leq n,$

  \[
  a_0^{2n+1} \left( {\textstyle \sum_{i \ge 0}} a_i z^i \right)^{-(n+1)} [z^k] =
  \sum_{\substack{|\alpha|' = k \\ |\alpha| = n}} \mu(-(n+1);\overline{\alpha}) \: 
    a^{\alpha}.
  \]
\end{proof}

\subsection{Sparseness}\label{sec:Sparseness}
In this section we prove that, at odd primes, many of the $MC_n$ do
in fact vanish.  We also give a sparseness result for the $a_i$.

\begin{prop}\label{prop:sparseness}
  If $n \not\equiv 0 \mod{p-1}$ then $MC_n=0$.
\end{prop}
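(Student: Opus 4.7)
My plan is to exploit the $\mathbb{F}_p^\times$-action on $BP^*\llbracket \xi \rrbracket / \langle p \rangle \xi$ defined by $\psi_c(\xi) = [c]_F \xi$ for $c \in \mathbb{F}_p^\times$, together with the cohomological grading. The first step will be to observe that $MC_n$ is $\mathbb{F}_p^\times$-invariant: the product formula $P_{C_p,MU}(x) = \prod_{i=0}^{p-1}([i]\xi +_F x)$ is symmetric in the set $\{[i]\xi : i \in \mathbb{Z}/p\}$, and $\xi \mapsto [c]\xi$ merely permutes this set modulo $[p]\xi$. Hence each coefficient $a_i$ is invariant, as is $\chi^{2n} P_{C_p,MU}[\cp{n}]$, and $r_*q_*$ transports invariance to $MC_n$. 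A direct count gives that $MC_n$ is homogeneous of cohomological degree $2n(p-2)$.

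The heart of the argument is to show that every homogeneous $\mathbb{F}_p^\times$-invariant element of $BP^*\llbracket \xi \rrbracket / \langle p \rangle \xi$ has cohomological degree divisible by $2(p-1)$. For an invariant $y = \sum_k c_k \xi^k \in BP^*\llbracket \xi \rrbracket$ of degree $d$, two constraints will collide: \emph{(a)} since every nonzero homogeneous piece of $BP^*$ has degree a nonpositive multiple of $2(p-1)$, the requirement $c_k \in BP^{d-2k}$ forces $k \equiv d/2 \pmod{p-1}$ whenever $c_k \neq 0$; \emph{(b)} reducing modulo the augmentation ideal $I = (v_1, v_2, \ldots)$ collapses $F$ to the additive formal group law, making $\psi_c$ act as $\xi \mapsto c\xi$, so invariance forces $c_k \equiv 0 \pmod I$ whenever $(p-1) \nmid k$. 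An $I$-adic induction, exploiting the exactness of the averaging operator $\tfrac{1}{p-1}\sum_c \psi_c$ (legitimate since $p-1$ is invertible in $\mathbb{Z}_{(p)}$), will then lift this to $c_k = 0$. Combining \emph{(a)} and \emph{(b)} forces $d \in 2(p-1)\mathbb{Z}$. The ideal $(\langle p \rangle \xi)$ is $\mathbb{F}_p^\times$-stable---in the quotient, $\langle p \rangle ([c]\xi)$ lies in the ideal because $[cp]\xi = 0$ there---so this conclusion descends to $BP^*\llbracket \xi \rrbracket / \langle p \rangle \xi$ via exactness of averaging on the short exact sequence.

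The proposition then follows immediately: $MC_n$ has cohomological degree $2n(p-2)$, and $2(p-1) \mid 2n(p-2)$ if and only if $(p-1) \mid n$, since $\gcd(p-1, p-2) = 1$. Hence $(p-1) \nmid n$ forces $MC_n = 0$.

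The main obstacle will be the $I$-adic lifting in the structure-of-invariants step: the mod-$I$ reduction by itself kills only the degree-zero part of each $c_k$, so one must iterate through the filtration to dispose of the higher-depth contributions. I expect this to be routine by working graded-piece by graded-piece---for fixed $d$ and $k$, $BP^{d-2k}$ is a finitely generated $\mathbb{Z}_{(p)}$-module---but some care is required because $BP^*$ itself is not Noetherian.
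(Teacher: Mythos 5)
Your strategy is sound, and at bottom it runs on the same engine as the paper's proof: the $\mathbb{F}_p^\times$-symmetry of the $C_p$-power operation forces the relevant elements into cohomological degrees divisible by $2(p-1)$, which is incompatible with $\deg MC_n = 2n(p-2)$ unless $(p-1)\mid n$. But you execute this genuinely differently. The paper applies the symmetry to the individual coefficients $a_i$ inside $BP^*(BC_p)=BP^*\llbracket\xi\rrbracket/[p]\xi$ and proves the degree-concentration of invariants \emph{topologically}, via the collapsing Atiyah--Hirzebruch spectral sequence (\prettyref{lem:a_i-sparseness}), then finishes with a term-by-term degree count in the explicit formula of \prettyref{thm:McClure-formula}. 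You apply invariance directly to $MC_n$ in $BP^*\llbracket\xi\rrbracket/\langle p\rangle\xi$ and propose a purely algebraic proof of the concentration statement (reduction mod $I=(v_1,v_2,\dots)$ to the additive formal group law, then an $I$-adic bootstrap), so you need neither the AHSS nor the term-by-term analysis --- though to see that $\chi^{2n}P_{C_p,MU}[\mathbb{C}P^n]$ itself is invariant you must either invoke naturality of tom Dieck's operations under $\mathrm{Aut}(C_p)$ or fall back on Quillen's formula \prettyref{eq:quillen1}, since the product formula only governs $P_{C_p}(x)$ for the orientation class. The paper's route outsources the hard step to standard topology; yours buys a self-contained algebraic statement about invariants of the formal-group quotient ring, at the cost of some non-Noetherian commutative algebra.

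That commutative algebra is where the real remaining work sits, and it is more than routine, although your instinct for why it closes is correct. Besides well-definedness of $\psi_c$ on the quotient (which holds because $[c]\xi=\xi\cdot(\mathrm{unit})$ and $[c]([p]\xi)=[p]\xi\cdot(\mathrm{series})$, so $\langle p\rangle([c]\xi)$ is a unit multiple of $\langle p\rangle\xi$), you need the $I$-adic associated graded of $A=BP^*\llbracket\xi\rrbracket/\langle p\rangle\xi$ to carry the linearized action $\xi\mapsto c\xi$ --- this uses that the initial form of $\langle p\rangle\xi$ is $p$, a non-zero-divisor on $\mathrm{gr}_I$ --- and, crucially, degreewise $I$-adic separatedness so that $\bigcap_m I^mA$ kills your class. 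The saving fact is exactly the one you name: in a fixed cohomological degree $j$, $I^m\cap BP^j=0$ for $m\gg 0$, so homogeneous $I$-adic Cauchy data are coefficientwise eventually constant and one can solve $f=h\cdot\langle p\rangle\xi$ in the limit; spelled out, this is a filtration argument, not a one-liner. Alternatively, since $p-1$ is invertible, averaging makes the invariants of $A$ a quotient of the invariants of $BP^*(BC_p)$, so you could import the AHSS computation of \prettyref{lem:a_i-sparseness} and skip the bootstrap entirely --- at which point your argument and the paper's essentially merge.
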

\begin{proof}
  The statement is vacuously true at the prime 2, so assume
  $p$ is odd.  The summands of the equation in
  \prettyref{thm:McClure-formula} are constant multiplies of
  $r_*[\cp{i}]$ and $a^\alpha$. The first term is nonzero only in
  degrees divisible by $2(p-1)$ and it follows from the lemma below
  that the nonzero $a^\alpha$ are also concentrated in degrees
  divisible by $2(p-1)$.

  Now the left side of the equation in \prettyref{thm:McClure-formula}
  is in degree $2n(p-2)$ and the right-hand side is concentrated in
  degrees divisible by $2(p-1)$.  Since 2 and $(p-2)$ are units mod $p$
  we see that $MC_n$ can only be non-zero when $n$ is divisible by
  $p-1$.
\end{proof}

\begin{lem}\label{lem:a_i-sparseness}
  The elements $a_i\in BP^*(BC_p)$ defined in
  \prettyref{eq:def-aseries} are zero if $i\not\equiv 0 \mod{p-1}$.
\end{lem}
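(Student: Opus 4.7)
My plan is to exploit the $(\mathbb{Z}/p)^\times$-symmetry of the defining product $\prod_{i=0}^{p-1}([i]_F\xi +_F x)$ together with the $2(p-1)$-sparsity of $BP^*$, and close the argument on a suitable associated graded.

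First, $(\mathbb{Z}/p)^\times$ acts on $BP^*(BC_p) \cong BP^*\llbracket\xi\rrbracket/[p]_F\xi$ by ring automorphisms $\psi_k\colon \xi \mapsto [k]_F\xi$, well-defined modulo $[p]_F\xi$ since $[k+p]\xi \equiv [k]\xi$ there. Because the group permutes the factors $\{[i]\xi\}_{i=0}^{p-1}$ of the product, each $a_i$ is $(\mathbb{Z}/p)^\times$-invariant. So the lemma reduces to showing that any $(\mathbb{Z}/p)^\times$-invariant element of $BP^{2m}(BC_p)$ with $(p-1) \nmid m$ must vanish.

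To prove this I would pass to the $\mathfrak{m}$-adic associated graded, with $\mathfrak{m} = (p, v_1, v_2, \ldots, \xi)$ the maximal ideal. Since $p$-typicality forces $[k]\xi \equiv k\xi \pmod{\mathfrak{m}^3}$, a short induction gives $([k]\xi)^c \equiv k^c \xi^c \pmod{\mathfrak{m}^{c+2}}$, so $\psi_k$ acts on the class of a monomial $p^a v^b \xi^c$ in $\mathfrak{m}^n/\mathfrak{m}^{n+1}$ by multiplication by $k^c$. The leading form of $[p]_F\xi$ is $\bar p \bar \xi$, and the resulting graded relations respect this character decomposition because the $\xi$-exponents arising in $(\text{monomial}) \cdot [p]_F\xi$ are all congruent modulo $p-1$. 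Hence invariants in each graded piece are spanned by classes of monomials with $(p-1) \mid c$.

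The degree constraint then closes the argument: such a monomial has cohomological degree $-2\sum_i b_i(p^i-1) + 2c \equiv 2c \pmod{2(p-1)}$, so invariant classes sit in degrees divisible by $2(p-1)$. If some $a_i$ were nonzero, I would take the largest $n$ with $a_i \in \mathfrak{m}^n$ — finite by $\mathfrak{m}$-adic separatedness of $BP^*(BC_p)$ — and consider the nonzero invariant image $\bar a_i \in \mathfrak{m}^n/\mathfrak{m}^{n+1}$; its degree $2(p-1-i)$ would have to be divisible by $2(p-1)$, forcing $(p-1) \mid i$. The points requiring most care in writing this out will be confirming $\mathfrak{m}$-adic separatedness of $BP^*(BC_p)$ and cleanly identifying the associated graded; both should follow from the fact that $\bar p \bar \xi$ is a nonzero divisor in $\mathrm{gr}_\mathfrak{m} BP^*\llbracket\xi\rrbracket$ and standard completion properties.
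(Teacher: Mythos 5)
Your overall strategy is the same as the paper's: establish $(\mathbb{Z}/p)^\times$-invariance of the $a_i$ from the symmetry of the defining product, pass to an associated graded on which the action is diagonal with eigenvalue $k^c$ on classes of monomials containing $\xi^c$, and close with the observation that the degrees of the $v_j$ are divisible by $2(p-1)$, so that $|a_i|=2(p-1-i)$ forces $(p-1)\mid i$. The difference is the choice of filtration: the paper uses the (collapsing) Atiyah--Hirzebruch spectral sequence, i.e.\ essentially the skeletal/$\xi$-adic filtration with associated graded $H^*(BC_p;BP^*)\cong BP^*[\bar\xi]/(p\bar\xi)$, whereas you take the full $(p,v_1,v_2,\ldots,\xi)$-adic filtration on $BP^*\llbracket\xi\rrbracket/[p]\xi$. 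Your eigenvalue and degree bookkeeping is fine, and the identification of your graded ring as $\mathbb{F}_p[\bar p,\bar v_1,\ldots,\bar\xi]/(\bar p\,\bar\xi)$ does follow from the non-zero-divisor property of the leading form together with separatedness of $BP^*\llbracket\xi\rrbracket$ itself.

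The genuine gap is the separatedness of the quotient, which you defer to ``standard completion properties.'' Those are not available here: $BP^*\llbracket\xi\rrbracket$ is neither Noetherian (infinitely many $v_i$) nor $\mathfrak{m}$-adically complete ($\mathbb{Z}_{(p)}$ is not $p$-complete), so Krull/Artin--Rees arguments do not apply, and a non-zero-divisor leading form does not by itself make a principal ideal closed. For instance, in $\mathbb{Z}_{(p)}[x]$ with $\mathfrak{m}=(p,x)$ the element $f=p(x-1)$ has non-zero-divisor leading form $-\bar p$, yet $p\equiv px^n$ mod $(f)$ for every $n$ while $p\notin(f)$, so the quotient is not $\mathfrak{m}$-adically separated. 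The separatedness you need is in fact true, but it requires a bespoke argument: for example, work in a fixed cohomological degree, note that if $a_i\in\mathfrak{m}^n+([p]\xi)$ for all $n$ then the quotients $h_n$ form a coefficientwise $p$-adically Cauchy sequence, pass to $\mathbb{Z}_p$-coefficients where the limit $h_\infty$ exists and $a_i=h_\infty\cdot[p]\xi$, and then use that $[p]\xi$ acts injectively on $(\mathbb{Z}_p/\mathbb{Z}_{(p)})\otimes BP^*\llbracket\xi\rrbracket$ (a module over the rationalization, on which $[p]\xi$ is $\xi$ times a unit) to conclude $h_\infty$ has $\mathbb{Z}_{(p)}$-coefficients. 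Alternatively, you can avoid the issue entirely by running your argument with the $\xi$-adic filtration: $BP^*\llbracket\xi\rrbracket$ is $\xi$-adically complete, so closedness of $([p]\xi)$ and the identification of the graded as $BP^*[\bar\xi]/(p\bar\xi)$ are immediate, and the same character-plus-degree argument goes through; that variant is in effect the paper's AHSS proof.
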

\begin{proof}
  Since the lemma is vacuously true for $p=2,$ we will assume $p$ is
  odd.  

  The action of $C_p^{\times}$ on $C_p$ induces an action of
  $C_{p}^{\times}$ on $BC_p$.
  In $BP^*(BC_p),$ an element $v\in C_p^\times$ acts on $[i]\xi$ by \[
  [i]\xi\mapsto [v i]\xi.\]   Since the product 
  \[\prod_{i=1}^{p-1}([i]\xi+_{BP}x)\] is invariant under this action,
  we see that $a_i\in BP^{2(p-i-1)}(BC_p)^{C_{p}^\times}$.

  The Atiyah-Hirzebruch spectral sequence computing $BP^*(BC_p)$
  collapses at the $E_2$ page, which is of the form $H^*(BC_p,BP^*)$.
  The group action above induces a group action on this page.  Since
  the edge homomorphism $BP^*(BC_p)\rightarrow H^*(BC_p),$ is an
  equivariant surjection that restricts to an isomorphism along the
  0th row, the associated graded of $BP^*(BC_p)^{C_p^\times}$ is
  isomorphic to $H^*(BC_p)^{C_p^\times}\otimes BP^*\cong \mathbb{Z}/p[
  \xi^{p-1}]\otimes BP^*$.

  Since this last group is concentrated in degrees divisible by
  $2(p-1),$ if $a_i\neq 0$ then 
  \[a_i\in BP^{2(p-1)*}(BC_p).\]  The congruence 
  \[ 
  \frac{|a_i|}{2} =(p-1-i) \equiv i \equiv 0 \mod{(p-1)}
  \] 
  implies $i$ is divisible by $p-1$.
 
\end{proof}

As a result, it is of interest to consider $MC_{2(p-1)}$.  In this case,
one can give the formula more explicitly:
\begin{align}
  MC_{2(p-1)}(\xi) & = a_0^{2p-4}r_*[\cp{(p-1)}] \left(-(2p-1)a_0a_{(p-1)}\right)\\
  & +a_0^{2p-4}r_*[\cp{0}] \left(-(2p-1)a_0a_{2(p-1)} + p(2p-1)a_{(p-1)}^2 \right)\notag
  \intertext{Making the simplifications $[\cp{0}] = 1$ and $r_*[\cp{p-1}] = v_1$, we have}
  MC_{2(p-1)}(\xi) & = (2p-1)a_0^{2p-4}\left(-v_1a_0a_{(p-1)}-a_0a_{2(p-1)} + pa_{(p-1)}^2 \right)\notag
\end{align}

\section{Calculations}\label{sec:Computer-Generated-Calculations}
In this section, we outline the computation of the $MC_n$, work through an
example at the prime 2, and display results at the primes $p \leq \maxp$.
We have developed a Sage package \cite{JoN10} to automate the calculations.

\subsection{Description of calculation}
We are working in
$BP^*\llbracket \xi \rrbracket /\langle p \rangle \xi$, and we
emphasize reduction modulo $\langle p \rangle \xi$ by writing
$\equiv$ mod $\langle p \rangle \xi$ instead of equality.  Our
calculations have three parameters: the prime, $p$, the value of
$n$, and a truncation number, $k$. All of our computations are
modulo $(\xi,x)^{k+1}$. If power series $f(\xi)$ and $g(\xi)$ are
equal modulo the ideal $(\xi)^{k+1}$, we write
\[
f(\xi)=g(\xi)+O(\xi)^{k+1}.
\] 
It is important to note, because of this choice, that the range of
accurate coefficients for the $a_i(\xi)$ decreases as $i$ grows.
Each $a_i$ is accurate modulo $\xi^{k-i+1}$.  Using the formula
above, and the fact that $a_0 = (p-1)!\cdot\xi^{p-1} + \cdots$, we
see that $MC_{2(p-1)}$ is accurate modulo $\xi^{k-p+2}$.

We have made efforts to streamline the computation, but our results are
limited by the computational complexity of formal group law calculations.
Determining the series $\exp_{BP}$ is already a task whose computation time
grows quickly with the length of the input.  Calculating the $a_i$ is also
a high-complexity task, and as a result we do not expect direct computation
to be a feasible approach for large primes.  We have not been able to work
in a large enough range to detect non-zero values of $MC_n$ for primes
greater than \maxp.

  To check for triviality modulo $\langle p \rangle \xi$, we make use of
  the following reduction algorithm: Suppose
  $g \in (\xi)^m \subset BP^* \llbracket \xi \rrbracket$ and write
  \[
  g = \sum_{i \ge 0} g_i \xi^{i+m}
  \]
  with $g_i \in BP^*$ and $g_0 \neq 0$.  If $p \not | \ g_0$ then $g \not \in
  (\langle p \rangle \xi )$.  If $g_0 = p \cdot d_0$ for $d_0 \in BP^*,$ then
  we have
  \[
  g'(\xi) = g(\xi) - d_0 \xi^m \langle p \rangle \xi
  \]
  with $g' \in (\xi)^{m+1}$ and
  $g \equiv g' \text{ mod }\langle p \rangle \xi$.  Iterating this process
  converges in the $\xi$-adic topology.

A similar adaptation of the usual Euclidean algorithm for division by $p$
gives the following.  We state the result integrally since
we are working with the Hazewinkel generators throughout.
\begin{prop}[Division Algorithm]\label{prop:div-alg}
  Let $g$ be a power series in
  $\bbZ[v_1,v_2,\ldots] \llbracket \xi \rrbracket$ and let
  $\langle p \rangle \xi$ be the reduced $p$-series, computed using the
  Hazewinkel generators.  Then there are unique power series $d$ and
  $s = \sum_{i \ge 0} s_i \xi^i$ in
  $\bbZ[v_1,v_2,\ldots] \llbracket \xi \rrbracket$ such that
 \[
  g(\xi) = d \cdot \langle p \rangle \xi + s
  \]
  and such that the polynomials $s_i \in \bbZ[v_1,v_2,\ldots]$
  have coefficients in the range $\{0, \ldots, p-1\}$.  The series $g$ is divisible by $\langle p \rangle \xi$ if and only
  if $s = 0$.
\end{prop}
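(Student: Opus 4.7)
The plan hinges on the key structural observation that $\langle p\rangle\xi \equiv p \pmod{\xi}$, so we may write
\[
\langle p\rangle\xi = p + \xi\cdot h(\xi)
\]
for some $h \in \bbZ[v_1,v_2,\ldots]\llbracket\xi\rrbracket$. This makes division by $\langle p \rangle \xi$ behave, with respect to the $\xi$-adic filtration, like ordinary division by $p$ applied coefficient-by-coefficient, with a self-correcting tail in higher $\xi$-degree. Both existence and uniqueness will follow by induction on the $\xi$-adic filtration, the local input at each stage being the usual Euclidean division of integers, applied monomial-wise to elements of $R := \bbZ[v_1, v_2, \ldots]$.

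For existence, I would build $d$ and $s$ as $\xi$-adic limits of partial sums. Suppose inductively that we have produced $d_0, \ldots, d_{n-1}$ and $s_0, \ldots, s_{n-1}$ in $R$, with each $s_i$ having coefficients in $\{0, \ldots, p-1\}$, such that
\[
g(\xi) - \Bigl(\sum_{i=0}^{n-1} d_i \xi^i\Bigr)\langle p\rangle\xi - \sum_{i=0}^{n-1} s_i \xi^i \; \in\; (\xi)^n.
\]
Let $e_n \in R$ be the coefficient of $\xi^n$ in this remainder. Using $\langle p \rangle \xi = p + \xi h(\xi)$, the coefficient of $\xi^n$ in $d_n\xi^n\langle p\rangle\xi$ is exactly $p\cdot d_n$, so to close the induction we need $e_n = p\cdot d_n + s_n$ with the coefficients of $s_n$ in $\{0,\ldots,p-1\}$. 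This is solved uniquely by applying integer Euclidean division to each monomial coefficient of $e_n$, determining $d_n, s_n \in R$ canonically. The sequences $\sum d_i \xi^i$ and $\sum s_i \xi^i$ converge $\xi$-adically to the desired $d$ and $s$.

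For uniqueness, suppose $d\langle p\rangle\xi + s = d'\langle p\rangle\xi + s'$, so $(d - d')\langle p\rangle\xi = s' - s$. The coefficient of $\xi^0$ on the left is $p\cdot(d_0 - d'_0)$; the coefficient of $\xi^0$ on the right is a polynomial in $R$ whose monomial coefficients lie in $\{-(p-1), \ldots, p-1\}$ and are divisible by $p$, hence all vanish. Thus $s_0 = s'_0$ and $d_0 = d'_0$; subtracting and dividing by $\xi$ lets us iterate and conclude $d = d'$, $s = s'$. The final claim follows immediately: if $g = d\cdot \langle p \rangle \xi$, then the pair $(d, 0)$ is a valid decomposition, and uniqueness forces the remainder $s$ of any other such decomposition to vanish. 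No step here presents a genuine obstacle beyond bookkeeping; the whole proof is essentially a $\xi$-adic repackaging of coefficient-wise Euclidean division by $p$, and the only substantive input is that the constant term (in $\xi$) of $\langle p\rangle \xi$ equals $p$.
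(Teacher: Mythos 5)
Your proof is correct and follows essentially the same route as the paper, which only sketches the argument as an iterated, $\xi$-adically convergent coefficient-wise Euclidean division by $p$, justified by the fact that $\langle p\rangle\xi \equiv p \bmod \xi$. Your write-up simply makes the induction and the uniqueness step (coefficients in $\{0,\ldots,p-1\}$ divisible by $p$ must vanish, plus torsion-freeness of $\bbZ[v_1,v_2,\ldots]$) explicit.
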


\subsection{\texorpdfstring{Sample calculation, $p = 2$}{Example calculation, p = 2}}\label{eg:eg-calc-2}
To give the reader a sense of how these calculations are implemented,
we work through the calculation of $MC_2(\xi)$ with the minimum range
of coefficients necessary to see that it is non-zero.  For this, it is
necessary to work modulo $(x,\xi)^8$. The formula for $MC_2$ is given
in \prettyref{prop:MCn-rearranged}:
\begin{align*}
  MC_2(\xi) & = a_0^{5} \sum_{k = 0}^2 r_*[\mathbb{C}P^{n-k}] \cdot
  \left({\textstyle \sum_{i \ge 0}} a_i z^i \right)^{-(n+1)}[z^k].
\end{align*}

Now one can easily check the formal computation
\begin{align*}
\left({\textstyle \sum_{i \ge 0}} a_i z^i \right)^{-1} = & \ 
a_0^{-1}
-a_1 a_0^{-2} z
+\left(-a_2 a_0^{-2}+a_1^2 a_0^{-3}\right) z^2\\
&+O(z)^3
\intertext{and hence}
\left({\textstyle \sum_{i \ge 0}} a_i z^i \right)^{-3} = & \ 
a_0^{-3}
-3a_1 a_0^{-4} z
+\left(-3a_2 a_0^{-4}+6a_1^2 a_0^{-5}\right) z^2\\
&+O(z)^3.
\end{align*}
The image of $[\cp{i}] \in MU^{-2i}$ under $r_*$ is given by
\[
r_*[\cp{i}] = 
\begin{cases}
0 & \text{if~} i \neq p^k - 1\\
[\cp{i}] = p^k\ell_k & \text{if~} i = p^k - 1\\
\end{cases}
\]
The elements $\ell_k$ are rational generators for $BP$, but it is
convenient to work with integral generators. For this example we
choose the Hazewinkel generators $v_k$, but the result is independent
of this choice.  It will be necessary only to use $v_1 = 2\ell_1$, so
we work modulo the ideal $I = (v_2, v_3, \ldots)$.  Modulo $I$ we
have $4\ell_2 = v_1^3$, and this will be the only additional
substitution we need to use.


Returning to the calculation, we have
\[
[\cp{0}] = 1,
\quad
r_*[\cp{1}] = 2\ell_1 = v_1,
\quad \text{and }
r_*[\cp{2}] = 0
\]
and so
\begin{align*}
MC_2(\xi) & = a_0^5 \left( -3v_1 a_0^{-4}a_1 +(-3a_2 a_0^{-4}+6a_1^2 a_0^{-5}) \right)\\
& = 6a_1^2 - 3a_0a_2 - 3v_1 a_0 a_1.
\end{align*}
To continue, we determine $a_0(\xi),$ $a_1(\xi),$ and $a_2(\xi)$.
These are defined by the following
(cf. \ref{eq:power-op-formula}, \ref{eq:def-aseries}):
\begin{align*}
 P_{C_p,BP}(x) = & \ r_*P_{C_p,MU}(x) = \prod_{i = 0}^1 \left([i]\xi +_{BP} x\right) = x \cdot \exp\left( \log(\xi) + \log(x) \right)\\
= & \ x \,\cdot \left[ 
a_0
+a_1 x^1
+a_2 x^2
+a_3 x^3 \right.\\
&\qquad +a_4 x^4
+a_5 x^5
+a_6 x^6
+a_7 x^7\\
&\qquad \left. +O(x,\xi)^8 \ \right].\\
\end{align*}
The logarithm is
\[
\log_{BP}(\xi) = \xi + \ell_1 \xi^2 + \ell_2 \xi^4 
+ O(\xi)^8
\]
and hence the exponential is
\begin{align*}
\exp_{BP}(\xi) = &\ 
\xi
-\ell _1 \xi^2
+2 \ell _1^2 \xi^3
+\left(-5 \ell _1^3-\ell _2\right) \xi^4\\
&+\left(14 \ell _1^4+6 \ell _1 \ell _2\right) \xi^5\\
&+\left(-42 \ell _1^5-28 \ell _1^2 \ell _2\right) \xi^6\\
&+\left(132 \ell _1^6+120 \ell _1^3 \ell _2+4 \ell _2^2\right) \xi^7\\
&+O(\xi)^8.
\end{align*}
Using the logarithm and exponential, we give the reduced $2$-series:
\begin{align*}
\langle 2 \rangle \xi = \frac{1}{\xi}\exp(2 \log(\xi)) = &\ 
2
-2 \ell _1 \xi
+8 \ell _1^2 \xi^2\\
&+\left(-36 \ell _1^3-14 \ell _2\right) \xi^3\\
&+\left(176 \ell _1^4+120 \ell _1 \ell _2\right) \xi^4\\
&+\left(-912 \ell _1^5-888 \ell _1^2 \ell _2\right) \xi^5\\
&+\left(4928 \ell _1^6+6240 \ell _1^3 \ell _2+448 \ell _2^2\right) \xi^6\\
&+O(\xi)^7
\intertext{
  Substituting the Hazewinkel generators, and working modulo $v_2$,
}
\langle 2 \rangle \xi = &\ 
2
-v_1 \xi
+2 v_1^2 \xi^2\\
&-8 v_1^3\xi^3\\
&+26 v_1^4\xi^4\\
&-84 v_1^5\xi^5\\
&+300 v_1^6\xi^6\\
&+O(\xi)^7
\end{align*}
and
\begin{align*}
 P_{C_p,BP} = x\;\cdot &\left[
\left( \xi + x + \ell_1 (\xi^2 + x^2) + \ell_2 (\xi^4 + x^4) + \ell_3 (\xi^8 +x^8) \right)\right.\\
& -\ell _1 \left( \xi + x + \ell_1 (\xi^2 + x^2) + \ell_2 (\xi^4 + x^4) + \ell_3 (\xi^8 +x^8) \right)^2\\
& +2\ell _1^2 \left( \xi + x + \ell_1 (\xi^2 + x^2) + \ell_2 (\xi^4 + x^4) + \ell_3 (\xi^8 +x^8) \right)^3\\
&+\left(-5 \ell _1^3-\ell _2\right) \left( \xi + x + \ell_1 (\xi^2 + x^2) + \ell_2 (\xi^4 + x^4) + \ell_3 (\xi^8 +x^8) \right)^4\\
&+\left(14 \ell _1^4+6 \ell _1 \ell _2\right) \left( \xi + x + \ell_1 (\xi^2 + x^2) + \ell_2 (\xi^4 + x^4) + \ell_3 (\xi^8 +x^8) \right)^5\\
&+\left(-42 \ell _1^5-28 \ell _1^2 \ell _2\right) \cdot\\
&\qquad \quad\left( \xi + x + \ell_1 (\xi^2 + x^2) + \ell_2 (\xi^4 + x^4) + \ell_3 (\xi^8 +x^8) \right)^6\\
&+\left(132 \ell _1^6+120 \ell _1^3 \ell _2+4 \ell _2^2\right) \cdot\\
&\qquad \quad\left( \xi + x + \ell_1 (\xi^2 + x^2) + \ell_2 (\xi^4 + x^4) + \ell_3 (\xi^8 +x^8) \right)^7\\
&\left.+O(x,\xi)^8\right].\\
\end{align*}
Expanding, and substituting the Hazewinkel generators, we have
\begin{align*}
a_0 & = \xi +O(\xi)^8\\
a_1 & = 1
-v_1 \xi+v_1^2 \xi^2
-2 v_1^3\xi^3\\
& \quad +3 v_1^4\xi^4
-4 v_1^5\xi^5\\
& \quad +v_1^6\xi^6
+ O(\xi)^7\\
&\equiv 1+v_1 \xi+v_1^4 \xi^4 +v_1^5 \xi^5+v_1^6\xi^6
+O(\xi)^7 \quad \text{mod } \langle 2 \rangle \xi \\
a_2 & = v_1^2 \xi
-4 v_1^3\xi^2
+10 v_1^4\xi^3
-21 v_1^5\xi^4\\
& \quad +43 v_1^6\xi^5
+ O(\xi)^6\\
& \equiv v_1^2 \xi+v_1^5 \xi^4
+O(\xi)^6 \quad \text{mod } \langle 2 \rangle \xi. \\
\end{align*}

Substituting into the formula for $MC_2$, we have (modulo $v_2$)
\begin{align*}
  MC_2(\xi) = & 6a_1^2 - a_0a_2 - 3v_1 a_0 a_1\\
  \equiv &\  
  6\left(
    1+v_1 \xi+v_1^4 \xi^4 +v_1^5 \xi^5+v_1^6\xi^6+O(\xi)^7
  \right)^2 \\
  &-3 \left(
    \xi +O(\xi)^8
  \right)
  \left(
    v_1^2 \xi+v_1^5 \xi^4+O(\xi)^6
  \right)\\ 
  &- 3v_1 \left(
    \xi +O(\xi)^8
  \right) 
  \left(
    1+v_1 \xi+v_1^4 \xi^4 +v_1^5 \xi^5+v_1^6\xi^6+O(\xi)^7
  \right)\\
  &\quad \text{ mod }\langle 2 \rangle\xi\\
  = &\  6+9 v_1 \xi+12 v_1^4\xi^4+18 v_1^5 \xi^5+21 v_1^6\xi^6
  +O(\xi)^7 \quad\text{mod }\langle 2 \rangle \xi.
  \intertext{
Note that, although $a_2$ is accurate only modulo $\xi^6$, the product
$a_0a_2$ is accurate modulo $\xi^7$ and hence $MC_2$ is accurate
modulo $\xi^7$.
Since the lowest-order term is $3\cdot2$, we subtract $3\cdot\langle 2 \rangle \xi$ to give}
  MC_2(\xi) \equiv &\ 
  12 v_1 \xi-6 v_1^2 \xi^2+v_1^3\xi^3-66 v_1^4\xi^4+270 v_1^5\xi^5-879 v_1^6\xi^6+O(\xi)^7 \quad\text{mod }\langle 2 \rangle \xi.
\end{align*}
Continuing to reduce in this way gives the following:
\[
  MC_2(\xi) \equiv \ v_1^6 \xi^6 + O(\xi)^7 \qquad \text{mod }\langle 2 \rangle \xi.
\]
Since the lowest-order term of the right-hand side is non-zero mod 2,
the entire expression is non-zero in $BP^*\llbracket \xi
\rrbracket/\langle 2 \rangle \xi$.

\subsection{\texorpdfstring{Results at $p = 2$}{Results at p = 2}}

\begin{align*}
  \langle 2 \rangle \xi = \; & 2 -\xi  v_1+2 \xi ^2 v_1^2+\xi ^3 \left(-8 v_1^3-7 v_2\right)+\xi ^4 \left(26 v_1^4+30 v_1 v_2\right)\\
  &+\xi ^5 \left(-84 v_1^5-111 v_1^2 v_2\right)+\xi ^6 \left(300 v_1^6+502 v_1^3 v_2+112 v_2^2\right)\\
  &+\xi ^7 \left(-1140 v_1^7-2299 v_1^4 v_2-960 v_1 v_2^2-127 v_3\right)\\
  &+\xi ^8 \left(4334 v_1^8+9958 v_1^5 v_2+5414 v_1^2 v_2^2+766 v_1 v_3\right)\\
  &+\xi ^9 \left(-16692 v_1^9-43118 v_1^6 v_2-29579 v_1^3 v_2^2-2380 v_2^3
    -3579 v_1^2 v_3\right)\\
  &+\xi ^{10} \left(65744 v_1^{10}+189976 v_1^7 v_2+161034 v_1^4 v_2^2+31012 v_1 v_2^3+17770 v_1^3 v_3+5616 v_2 v_3\right)\\
  &+\xi ^{11} \left(-262400 v_1^{11}-837637 v_1^8 v_2-838452 v_1^5 v_2^2-240631 v_1^2 v_2^3-86487 v_1^4 v_3\right.\\
  &\quad \qquad \left.-55329 v_1 v_2 v_3\right)\\
  &+\xi ^{12} \left(1056540 v_1^{12}+3685550 v_1^9 v_2+4232750 v_1^6
    v_2^2+1600786 v_1^3 v_2^3+58268 v_2^4 \right.\\
  &\quad \qquad \left.+404198 v_1^5 v_3+363210 v_1^2v_2 v_3\right)\\
  &+\xi ^{13} \left(-4292816 v_1^{13}-16254540 v_1^{10} v_2-21110372
    v_1^7 v_2^2-10071369 v_1^4 v_2^3-1022466 v_1 v_2^4\right.\\
  & \quad \qquad \left.-1864478 v_1^6 v_3-2193009 v_1^3 v_2 v_3-212440 v_2^2 v_3\right)\\
  &+O(\xi)^{14}\\
  \\
  MC_1(\xi) \equiv \; & \xi ^2 v_1^2 +\xi ^3 v_2 +\xi ^4
  \left(v_1^4+v_1 v_2\right) +\xi ^7 \left(v_1^7+v_3\right)
  +\xi ^8 \left(v_1^8+v_1 v_3\right) \\
  &+\xi ^9 \left(v_1^9+v_1^6 v_2+v_1^3 v_2^2+v_2^3+v_1^2 v_3\right)
  +\xi ^{10} \left(v_1^{10}+v_1 v_2^3+v_1^3 v_3\right)
  +\xi ^{11} \left(v_1^5 v_2^2+v_1 v_2 v_3\right)\\
  &+\xi ^{12} \left(v_1^{12}+v_1^9 v_2+v_1^6 v_2^2+v_1^3
    v_2^3+v_2^4+v_1^5 v_3\right)
  +\xi ^{13} v_1^4 v_2^3\\
  & +O(\xi)^{14} \quad \text{mod } \langle 2 \rangle \xi\\
  \\
  MC_2(\xi) \equiv \; & \xi ^6 \left(v_1^6+v_2^2\right) +\xi ^7
  \left(v_1^7+v_3\right) +\xi ^8 \left(v_1^5 v_2+v_1 v_3\right) +\xi
  ^9 v_2^3
  +\xi ^{10} \left(v_1^4 v_2^2+v_1 v_2^3\right)\\
  &+\xi ^{11} \left(v_1^5 v_2^2+v_1^2 v_2^3+v_1^4 v_3\right) +\xi
  ^{12} \left(v_1^9 v_2+v_1^5 v_3\right)
  +\xi ^{13} \left(v_1^{13}+v_1^{10} v_2+v_1^3 v_2 v_3\right)\\
  & +O(\xi)^{14} \quad \text{mod } \langle 2 \rangle \xi\\
  \\
  MC_3(\xi) \equiv \; & \xi ^6 v_1^6 +\xi ^7 \left(v_1^4 v_2+v_1
    v_2^2\right) +\xi ^8 \left(v_1^8+v_1^5 v_2+v_1 v_3\right) +\xi
  ^{10} \left(v_1^{10}+v_1^7 v_2+v_1^4 v_2^2+v_1^3 v_3+v_2
    v_3\right)\\
  &+\xi ^{11} \left(v_1^{11}+v_1^8 v_2+v_1^4 v_3+v_1 v_2 v_3\right)
  +\xi ^{12} v_1^3 v_2^3
  +\xi ^{13} \left(v_1^{13}+v_1^3 v_2 v_3+v_2^2 v_3\right)\\
  & +O(\xi)^{14} \quad \text{mod } \langle 2 \rangle \xi\\
  \\
  MC_4(\xi) \equiv \; & \xi ^{10} v_1^4 v_2^2
  +\xi ^{11} \left(v_1^{11}+v_1^8 v_2+v_1^5 v_2^2+v_1^4 v_3\right)\\
  &+\xi ^{12} \left(v_1^9 v_2+v_1^3 v_2^3+v_2^4\right) +\xi ^{13}
  \left(v_1^{10} v_2+v_1^4 v_2^3+v_1^6 v_3+v_1^3 v_2
    v_3+v_2^2 v_3\right)\\
  &+O(\xi)^{14} \quad \text{mod } \langle 2 \rangle \xi\\
  \\
  MC_5(\xi) \equiv \; & 0 + O(\xi)^{14} \quad \text{mod } \langle 2
  \rangle \xi
\end{align*}

\subsection{\texorpdfstring{Results at $p = 3$}{Results at p = 3}}

\begin{align*}
  \langle 3 \rangle \xi = \; & 3 -8 \xi ^2 v_1+72 \xi ^4 v_1^2-840 \xi
  ^6 v_1^3\\ 
  &+\xi ^8 \left(9000 v_1^4-6560 v_2\right)+\xi ^{10}
  \left(-88992 v_1^5+216504 v_1 v_2\right)\\ 
  &+\xi ^{12} \left(658776
  v_1^6-5360208 v_1^2 v_2\right)+\xi ^{14} \left(1199088 v_1^7+119105576
  v_1^3 v_2\right)\\ 
  &+\xi ^{16} \left(-199267992 v_1^8-2424100032 v_1^4
  v_2+129120480 v_2^2\right)\\ 
  &+\xi ^{18} \left(5896183992
  v_1^9+45824243688 v_1^5 v_2-8307203592 v_1 v_2^2\right)\\ 
  &+\xi ^{20}
  \left(-133449348816 v_1^{10}-807801733088 v_1^6 v_2+336744805688 v_1^2
  v_2^2\right)\\
  &+\xi ^{22} \left(2658275605728 v_1^{11}+13162584394728 v_1^7 v_2-11021856839856 v_1^3 v_2^2\right)\\ 
  &+\xi ^{24}
  \left(-48579725371464 v_1^{12}-193206868503840 v_1^8
  v_2+314960186505360 v_1^4 v_2^2\right.\\
  &\quad \qquad \left.-3670852206240 v_2^3\right)\\
  &+O(\xi)^{26}\\
  \\
  MC_2(\xi) \equiv \; & 
  v_1^3 \xi ^8
  +2 v_2 \xi ^{10}
  +\left(v_1^5+v_2 v_1\right) \xi ^{12}
  +2 v_1^2 v_2 \xi ^{14}
  +2 v_1^7 \xi ^{16}
  +\left(2 v_1^8+v_2^2\right) \xi ^{18}\\
  &+\left(v_2 v_1^5+v_2^2 v_1\right) \xi ^{20}
  +\left(2 v_1^{10}+2 v_2 v_1^6+v_2^2 v_1^2\right) \xi ^{22}
  +\left(v_1^{11}+v_2 v_1^7\right) \xi ^{24}\\
  &+O(\xi)^{26} \qquad \text{mod } \langle 3 \rangle \xi\\  
\\
  MC_4(\xi) \equiv \; & 2 v_1^9 \xi ^{22}+2 v_1^{10} \xi ^{24}
  +O(\xi)^{26} \qquad \text{mod } \langle 3 \rangle \xi\\  
\end{align*}

\begin{note}
  For $p > 3,$ we omit $\langle p \rangle \xi$ and $MC_{(p-1)}$.
\end{note}
\ 

\subsection{\texorpdfstring{Results at $p = 5$}{Results at p = 5}}

\begin{align*}
MC_8 (\xi) \equiv \; & 3v_1^{16}\xi^{88} + (4v_1^{17} + v_1^{11}v_2)\xi^{92} +
(3v_1^{18} + 4v_1^6v_2^2)\xi^{96}  + O(\xi^{100})   \quad \text{mod } \langle 5 \rangle \xi\\
\end{align*}

\subsection{\texorpdfstring{Results at $p = 7$}{Results at p = 7}}

\begin{align*}
MC_{12}(\xi) \equiv \; & 4v_1^{22}\xi^{192} + (4v_1^{23} +
2v_1^{15}v_2)\xi^{198} + (6v_1^{24} + 4v_1^{16}v_2 +
5v_1^8v_2^2)\xi^{204}\\
& + (5v_1^{25} + 5v_1^{17}v_2 + 4v_1^9v_2^2
+ 3v_1v_2^3)\xi^{210} + (2v_1^{18}v_2 + 3v_1^{10}v_2^2 +
4v_1^2v_2^3)\xi^{216} \\
& +O(\xi^{222})   \quad \text{mod } \langle 7 \rangle \xi\\
\end{align*}

\subsection{\texorpdfstring{Results at $p = 11$}{Results at p = 11}}

\begin{align*}
MC_{20}(\xi) \equiv \; & 9v_1^{34}\xi^{520} + (8v_1^{35} +
6v_1^{23}v_2)\xi^{530} + (7v_1^{36} + v_1^{24}v_2 +
5v_1^{12}v_2^2)\xi^{540}  + O(\xi^{550})   \quad \text{mod } \langle 11 \rangle \xi\\
\end{align*}

\subsection{\texorpdfstring{Results at $p = 13$}{Results at p = 13}}

\begin{align*}
MC_{24}(\xi) \equiv \; & 11v_1^{40}\xi^{744} + (6v_1^{41} +
6v_1^{27}v_2)\xi^{756} + O(\xi^{768})   \quad \text{mod } \langle 13 \rangle \xi\\
\end{align*}

\bibliographystyle{amsalpha}

\bibliography{Johnson-Noel_TOPOL-4016.bbl}

\def\cprime{$'$}
\providecommand{\bysame}{\leavevmode\hbox to3em{\hrulefill}\thinspace}
\providecommand{\MR}{\relax\ifhmode\unskip\space\fi MR }
\providecommand{\MRhref}[2]{%
  \href{http://www.ams.org/mathscinet-getitem?mr=#1}{#2}
}
\providecommand{\href}[2]{#2}
\begin{thebibliography}{EKMM97}

\bibitem[Ada95]{Ada95}
J.~F. Adams, \emph{Stable homotopy and generalised homology}, Chicago Lectures
  in Mathematics, University of Chicago Press, Chicago, IL, 1995, Reprint of
  the 1974 original. \MR{MR1324104 (96a:55002)}

\bibitem[And95]{And95}
Matthew Ando, \emph{Isogenies of formal group laws and power operations in the
  cohomology theories {$E\sb n$}}, Duke Math. J. \textbf{79} (1995), no.~2,
  423--485. \MR{MR1344767 (97a:55006)}

\bibitem[BM04]{BaM04}
A.~J. Baker and J.~P. May, \emph{Minimal atomic complexes}, Topology
  \textbf{43} (2004), no.~3, 645--665. \MR{MR2041635 (2005a:55003)}

\bibitem[BMMS86]{BMMS86}
R.~R. Bruner, J.~P. May, J.~E. McClure, and M.~Steinberger, \emph{{$H_\infty $}
  ring spectra and their applications}, Lecture Notes in Mathematics, vol.
  1176, Springer-Verlag, Berlin, 1986. \MR{MR836132 (88e:55001)}

\bibitem[EKMM97]{EKMM97}
A.~D. Elmendorf, I.~Kriz, M.~A. Mandell, and J.~P. May, \emph{Rings, modules,
  and algebras in stable homotopy theory}, Mathematical Surveys and Monographs,
  vol.~47, American Mathematical Society, Providence, RI, 1997, With an
  appendix by M. Cole. \MR{MR1417719 (97h:55006)}

\bibitem[Goe01]{Goe01}
Paul Goerss, \emph{Associative {$MU$}-algebras},
  \url{http://www.math.northwestern.edu/~pgoerss/papers/mualg.ps}, 2001.

\bibitem[HHR09]{HHR09}
Michael~A. Hill, Michael~J. Hopkins, and Douglas~C. Ravenel, \emph{On the
  non-existence of elements of {K}ervaire invariant one}, arxiv:0908.3724
  (2009).

\bibitem[HKM01]{HKM01}
P.~Hu, I.~Kriz, and J.~P. May, \emph{Cores of spaces, spectra, and {$E\sb
  \infty$} ring spectra}, Homology Homotopy Appl. \textbf{3} (2001), no.~2,
  341--354 (electronic), Equivariant stable homotopy theory and related areas
  (Stanford, CA, 2000). \MR{MR1856030 (2002j:55005)}

\bibitem[JN10]{JoN10}
Niles Johnson and Justin Noel, \emph{{S}age code for computing {M}c{C}lure's
  obstructions to ${H}_\infty$ structure on ${BP}$}, 2010,
  \url{http://www.math.uga.edu/~njohnson/research/McClureDefs.sage}.

\bibitem[Lan76]{Lan76}
Peter~S. Landweber, \emph{Homological properties of comodules over {$M{\rm
  U}\sb\ast (M{\rm U})$}\ and {BP{$\sb\ast $}}({BP})}, Amer. J. Math.
  \textbf{98} (1976), no.~3, 591--610. \MR{MR0423332 (54 \#11311)}

\bibitem[Laz03]{Laz03}
A.~Lazarev, \emph{Towers of {$M$}{U}-algebras and the generalized
  {H}opkins-{M}iller theorem}, Proc. London Math. Soc. (3) \textbf{87} (2003),
  no.~2, 498--522. \MR{MR1990937 (2004c:55006)}

\bibitem[Laz04]{Laz04a}
\bysame, \emph{Spaces of multiplicative maps between highly structured ring
  spectra}, Categorical decomposition techniques in algebraic topology ({I}sle
  of {S}kye, 2001), Progr. Math., vol. 215, Birkh\"auser, Basel, 2004,
  pp.~237--259. \MR{MR2039769 (2005g:55010)}

\bibitem[LMS86]{LMS86}
L.~G. Lewis, J.P. May, and M.~Steinberger, \emph{Equivariant stable homotopy
  theory}, Lecture Notes in Mathematics, vol. 1213, Springer-Verlag, 1986.

\bibitem[May77]{May77}
J.~P. May, \emph{${E}_\infty$-ring spaces and ${E}_\infty$-ring spectra},
  Lecture Notes in Math., vol. 577, Springer-Verlag, 1977.

\bibitem[Noe09]{Noe09}
Justin Noel, \emph{{$H_\infty\neq E_\infty$}}, Arxiv preprint 0910.3566 (2009),
  1--4.

\bibitem[Qui69]{Qui69a}
Daniel Quillen, \emph{On the formal group laws of unoriented and complex
  cobordism theory}, Bull. Amer. Math. Soc. \textbf{75} (1969), 1293--1298.
  \MR{MR0253350 (40 \#6565)}

\bibitem[Qui71]{Qui71d}
\bysame, \emph{Elementary proofs of some results of cobordism theory using
  {S}teenrod operations}, Advances in Math. \textbf{7} (1971), 29--56 (1971).
  \MR{MR0290382 (44 \#7566)}

\bibitem[Rav00]{Rav00}
Douglas Ravenel, \emph{Complex cobordism and stable homotopy groups of
  spheres}, American Mathematical Society, 2000.

\bibitem[Rez98]{Rez98}
Charles Rezk, \emph{Notes on the {H}opkins-{M}iller theorem}, Homotopy theory
  via algebraic geometry and group representations ({E}vanston, {IL}, 1997),
  Contemp. Math., vol. 220, Amer. Math. Soc., Providence, RI, 1998,
  pp.~313--366. \MR{MR1642902 (2000i:55023)}

\bibitem[Ric06]{Ric06}
Birgit Richter, \emph{A lower bound for coherences on the {B}rown-{P}eterson
  spectrum}, Algebr. Geom. Topol. \textbf{6} (2006), 287--308 (electronic).
  \MR{MR2199461 (2006k:55019)}

\bibitem[Str99]{Str99}
Neil~P. Strickland, \emph{Products on {${\rm MU}$}-modules}, Trans. Amer. Math.
  Soc. \textbf{351} (1999), no.~7, 2569--2606.

\bibitem[tD68]{Die68}
T.~tom Dieck, \emph{{S}teenrod-{O}perationen in {K}obordismen-{T}heorien},
  Math. Z. \textbf{107} (1968), 380--401.

\end{thebibliography}

\end{document}